\def\bU {\mathbf{U}}
\def\fS {\mathfrak{S}}
\def\fsu {\mathfrak{su}}
\def\cA {\mathcal{A}}
\def\cD {\mathcal{D}}
\def\cE {\mathcal{E}}
\def\cF {\mathcal{F}}
\def\cM {\mathcal{M}}
\def\cP {\mathcal{P}}
\def\cT {\mathcal{T}}
\def\cV {\mathcal{V}}
\def\a {{\alpha}}
\def\g {{\gamma}}
\def\de {{\delta}}
\def\eps {{\epsilon}}
\def\ka {{\kappa}}
\def\l {{\lambda}}
\def\L {{\Lambda}}
\def\si {{\sigma}}
\def\Si {{\Sigma}}
\def\Om {{\Omega}}
\def\d {{\partial}}
\def\Dlt {{\Delta}}
\def\rstr {{\big |}}
\def\la {\langle}
\def\ra {\rangle}
\def \lA {\big\langle \! \! \big\langle}
\def \rA {\big\rangle \! \! \big\rangle}
\def\wto {{\rightharpoonup}}
\newcommand{\Div}{\operatorname{div}}
\newcommand{\Diam}{\operatorname{diam}}
\newcommand{\Supp}{\operatorname{supp}}
\newcommand{\Tr}{\operatorname{trace}}
\newcommand{\Dist}{\operatorname{dist}}
\newcommand{\Lip}{\operatorname{Lip}}
\newcommand{\MK}{\operatorname{dist_{MK,1}}}
\newcommand{\MKd}{\operatorname{dist_{MK,2}}}
\newcommand{\bbr}{\mathbb R}
\newcommand{\bbc}{\mathbb C}
\newcommand{\bbs}{\mathbb S}
\def\wto {{\rightharpoonup}}
\newcommand{\ba}{\begin{aligned}}
\newcommand{\ea}{\end{aligned}}
\newcommand{\be}{\begin{equation}}
\newcommand{\ee}{\end{equation}}
\newtheorem{Thm}{Theorem}[section]
\newtheorem{Lem}{Lemma}[section]
\newtheorem{Prop}{Proposition}[section]
\newtheorem{Def}{Definition}[section]
\numberwithin{equation}{section}
\begin{document}

\title[Mean-Field Limit of Lohe Matrix Model]{A Mean-Field Limit of the Lohe Matrix Model and Emergent Dynamics}

\author[F. Golse]{Fran\c cois Golse}
\address[F.G.]{Ecole polytechnique, CMLS, 91128 Palaiseau Cedex, France}
\email{francois.golse@polytechnique.edu}

\author[S.-Y. Ha]{Seung-Yeal Ha}
\address[S.-Y. H]{Seoul National University, Dept. of Math. Sciences and Research Institute of Mathematics, Seoul 08826, Korea (Republic of) and Korea Institute for Advanced Studies, School of Mathematics, Seoul 130-722, Korea (Republic of)}
\email{syha@snu.ac.kr }

\thanks{\textbf{Acknowledgment.} The work of S.-Y. Ha is partially supported by National Research Foundation of Korea Grant (NRF-2017R1A2B2001864) funded by the Korea Government. This work was started, when the second author stayed 
National Center for Theoretical Sciences-Mathematics Division in National Taiwan University in the fall of 2015}

\begin{abstract}
The Lohe matrix model is a continuous-time dynamical system describing the collective dynamics of group elements in the unitary group manifold, and it has been introduced as a toy model of a non-abelian generalization of the Kuramoto phase 
model. In  the absence of couplings, it reduces to the finite-dimensional decoupled free Schr\"{o}dinger equations with constant Hamiltonians. In this paper, we study a rigorous mean-field limit of the Lohe matrix model which results in a Vlasov
type equation for the probability density function on the corresponding phase space. We also provide two different settings for the emergent synchronous dynamics of the kinetic Lohe equation in terms of the initial data and the coupling strength. 
\end{abstract}

\keywords{BBGKY hierarchy, Kuramoto model, Lohe model, Mean-field limit, Quantum synchronization }

\subjclass{82C10, 82C22, 35B37}

\date{\today}

\maketitle



\section{Introduction} \label{sec:1}

Synchronization of weakly coupled oscillators is ubiquitous in our biological, engineering and physical complex systems, e.g., cardiac pacemaker cells \cite{Pe}, biological clocks in the brain \cite{Wi1}, Josephson junction arrays \cite{A-B, P-R} 
and flashing of fireflies \cite{B-B} etc (see \cite{A-B, D-B1, H-K-P-Z} for a detailed survey). It was first reported by Huygens with the two pendulum clocks hanging on the same bar on the wall in the middle of seventeen century (see \cite{P-R} 
for a brief history of synchronization). However, its rigorous and systematic study has begun by  A. Winfree \cite{Wi2} and Y. Kuramoto \cite{Ku1, Ku2} in only half century ago. Recently, the research on the collective dynamics of complex systems
 has received lots of attention in various scientific disciplines \cite{A-B, B-B, D-B1, Pe, P-R, St, Wi2, Wi1} mainly due to their possible engineering applications in sensor networks, control of robots and unmanned aerial vehicles etc. After Winfree 
 and Kuramoto's seminal works, many phenomenological models have been proposed. Among them, the Kuramoto model is a prototype one for classical phase coupled oscillators, and it has been extensively studied in literature (see for instance\cite{C-H-J-K, C-S, D-X, D-B1, D-B0, D-B, H-K-R, M-S1, M-S2, M-S3,V-M1, V-M2}). On the other hand, because emerging applications in control theory of quantum systems, quantum synchronization has been discussed in physics communities 
and several phenomenological master equations were proposed in \cite{A-E, H-L-G, G-C, G-G, Lo-1, Lo-2, X-T, Z-S} to describe quantum synchronization. 

Our main interest in this paper lies on the Lohe matrix model \cite{Lo-1, Lo-2} which is a continuous dynamical system on the unitary group. This model was proposed as a non-abelian generalization of the Kuramoto  phase model in \cite{Lo-1},
and it was further generalized in \cite{De,H-K-R2}.  
Next, we briefly describe the Lohe matrix model. Consider a network \cite{Ki, X-T, Z-S} consisting of $N$ nodes and edges connecting all possible pairs of nodes.  Each node can be viewed as a component of a physical system interacting via 
edges. For instance, atoms at nodes can have effect on spin-spin interactions generated by a single photon pulse traveling along the channels (see \cite{Ki} for a detailed description).  Let $U_j$ and $U_j^{*}$ be a $d \times d$ unitary matrix 
and its Hermitian conjugate, and $H_j$ a $d \times d$ Hermitian matrix whose eigenvalues are the natural frequencies of the $i$-th Lohe oscillator. Henceforth, we denote by $\bU(d)$ the group of $d \times d$ unitary matrices. In this situation, 
the Lohe matrix model reads as follows.
\begin{equation} \label{Lohe}
{\mathrm i} \dot{U}_j U_j^{*} = H_j + \frac{{\mathrm i}\kappa}{2N}\sum_{k = 1}^{N} \left(U_k U_j^{*} - U_j U_k^{*} \right), \quad j=1,\cdots,N,
\end{equation}
where $\kappa$ is a nonnegative coupling strength and ${\mathrm i} = \sqrt{-1}$. In the zero coupling case $\kappa = 0$, the model \eqref{Lohe} is a system of $N$ uncoupled free Schr\"{o}dinger equations with constant Hamiltonians. In the 
case of dimension $d=1$, system \eqref{Lohe} can also be reduced to the usual (abelian) Kuramoto model \cite{Ku1, Ku2}. In the case of dimension $d=2$, the Lohe model is equivalent to a ``consensus swarming model'' \cite{Ol} on the
sphere $\bbs^3$. See Section 2.2 for more details on these special cases.

Since the right hand side of \eqref{Lohe} is a self-adjoint matrix, say $S_j$, one has
\[
\frac{d}{dt}(U_jU^*_j)=\dot{U}_j U_j^*+U_j\dot U_j^*=-iS_j+iS^*_j=0
\]
for $j=1,\dots,N$. Since the system \eqref{Lohe} is autonomous, and $S_j$ is quadratic and therefore of class $C^1$ in its arguments $U_1,\ldots,U_N$, there exists a unique solution $t\mapsto(U_1(t),\dots,U_N(t))$ to the Cauchy problem 
for \eqref{Lohe} for any $N$-tuple of initial data $(U_1^0,\ldots,U_N^0)\in\bU(d)^N$ by the application of the standard Cauchy-Lipschitz theory. The last identity shows that $(U_1(t),\dots,U_N(t))\in\bU(d)^N$, and since $\bU(d)$ is compact, 
the usual continuation argument implies that the maximal solution $t\mapsto(U_1(t),\dots,U_N(t))$ is defined for all $t\in\bbr$.

Thus, one of  interesting issues for \eqref{Lohe} is the asymptotic dynamic behaviors such as  the formation of phase-locked states and its relaxation dynamics.  As discussed in \cite{A-B, C-H-J-K, C-S, D-B1},  the existence and stability of 
phase-locked states (a kind of relative equilibria) are important subjects in the nonlinear dynamics of the Kuramoto model. Henceforth we identify $U_j(t)$ as a state of the $j$-th Lohe oscillator. 

When the number of Lohe oscillators is large, it is virtually impossible to integrate the finite dimensional system \eqref{Lohe} directly. For that reason, we have addressed the following questions in the present paper: 
\begin{quote}
\begin{itemize}
\item
Can one rigorously derive a mean-field, kinetic type equation from the Lohe matrix model in the large $N$ limit?
\vspace{0.2cm}
\item
If so, under what conditions do solutions to the kinetic mean-field equation exhibit some type of synchronization?
\end{itemize}
\end{quote}

\smallskip
After reviewing the notation used below (most of which is fairly classical), we shall conclude this introduction with a short presentation of our main results, and with a detailed outline of the paper.


\subsection*{Notation.}


Let $M_d(\bbc)$ be the set of $d\times d$ square matrices with complex entries, and let $I_d$ be the identity matrix in $M_d(\bbc)$. As mentioned above, the (Lie) group of unitary matrices in $M_d(\bbc)$ is denoted by $\bU(d)$, i.e.
\[
\bU(d):=\{U\in M_d(\bbc)\hbox{ s.t. }UU^*=U^*U=I_d\}.
\]
The Lie algebra of $\bU(d)$ is
\[
\fsu(d):=\{A\in M_d(\bbc)\hbox{ s.t. }A+A^*=0\}.
\]
For each $U\in \bU(d)$, the tangent space $\bU(d)$ at the point $U$ is
\[
T_U\bU(d):=\{AU\,:\,A\in\fsu(d)\}.
\]

Throughout the present paper, we designate by $\|\cdot\|$ the operator norm on $M_d(\bbc)$ and by $\|\cdot\|_2$ the Frobenius norm on $M_d(\bbc)$, i.e.
\[
\begin{aligned}
\|A\|^2:=\text{ spectral radius of }A^*A,\,\,\hbox{ and }\,\,\|A\|_2^2:=\Tr(A^*A)
\end{aligned}
\]
for all $A\in M_d(\bbc)$. We shall use repeatedly the following obvious inequalities:
\begin{equation}\label{IneqNorms}
\|A\|\le\|A\|_2,\quad\|AB\|_2\le\|A\|\,\|B\|_2\,\,\text{ and }\,\,\|BA\|_2\le\|B\|_2\,\|A\|
\end{equation}
for all $A,B\in M_d(\bbc)$. We also recall that the Frobenius norm is associated with the Hermitian inner product $(A,B)\mapsto\Tr(A^*B)$ on $M_d(\bbc)$, and therefore satisfies the Cauchy-Schwarz inequality
\begin{equation}\label{CSFrob}
|\Tr(A^*B)|\le\|A\|_2\|B\|_2
\end{equation}
for all $A,B\in M_d(\bbc)$.

Let $X_1,X_2$ be two sets, and $\cM_1,\cM_2$ be $\si$-algebras of subsets of $X_1,X_2$ respectively. Let $\Phi:\,(X_1,\cM_1)\to(X_2,\cM_2)$ be a measurable map, and let $\nu_1$ be a positive (or signed, or complex)\footnote{We recall
that a complex (or a signed) measure must have finite total variation, and is therefore a complex linear combination of four finite positive measures.} measure on the measurable space $(X_1,\cM_1)$. The formula $\nu_2(B):=\nu_1(\Phi^{-1}(B))$ 
defines a positive (or signed, or complex) measure on the measurable space $(X_2,\cM_2)$, henceforth denoted as $\nu_2=\Phi\#\nu_1$, and referred to as the push-forward of $\nu_1$. 

If $X$ is a metric space, we designate by $\cP(X)$ the set of Borel probability measures on $X$. For each $p\ge 0$, we denote by $\cP_p(X)$ the set of Borel probability measures $\nu$ on $X$ such that
\[
\int_Xd(x_0,x)^p\nu(dx)<\infty
\]
for some $x_0\in X$, where $d$ is the metric on $X$. (By the triangle inequality, the set $\cP_p(X)$ is obviously independent of the choice of the point $x_0\in X$.)

Finally, we denote by $\fS_N$ the group of permutations of $\{1,\ldots,N\}$.


\subsection*{Main results and outline of the paper.}


Our first main result in this paper is the rigorous derivation of a mean-field kinetic equation which describes the behavior of the ``typical'' Lohe oscillator in a (large) system of $N\gg 1$ oscillators governed by the Lohe model \eqref{Lohe}.
Denote by $f\equiv f(t,dUdA)$ the probability that, at time $t$, the ``typical'' oscillator is in an infinitesimal box of size $dU$ about $U$, with $\mathrm{i}$ times ``frequency'' in an infinitesimal box of size $dA$ about $A$. We prove that, in
the large $N$ limit, the time-dependent probability measure $f$ is a weak solution to the following Vlasov-type, mean-field kinetic equation:
\begin{equation}\label{KLohe}
\left\{
\begin{aligned} 
{}&\d_tf+\Div_{U}\left(f(A+\tfrac{\ka}2(\la V\ra U^*-U\la V\ra^*))U\right)=0, 
\\
&\la V\ra:=\int_{\bU(d)\times\fsu(d)}Vf(t,dVdB), \quad (U, A)\!\in\!\bU(d)\!\times\!\fsu(d),\quad t > 0.
\end{aligned}
\right.
\end{equation}
(See Definition \ref{D3.1}, especially formula \eqref{C-0}, for definition of $\Div_U$.)

To the authors' knowledge, this result is the first rigorous derivation of a kinetic model on the group manifold $\bU(d) \times \fsu(d)$. We also estimate the fluctuations of the dynamics of the ``typical'' oscillator in \eqref{Lohe} about the
mean-field dynamics \eqref{KLohe}, and prove that they are of order $O(1/\sqrt{N})$ in quadratic Monge-Kantorovich (or Wasserstein) distance over any finite time interval. The rigorous derivation of the kinetic equation \eqref{KLohe} 
from the finite Lohe lattice \eqref{Lohe} occupies all of Section \ref{sec:3}, and is summarized in Theorem \ref{T3.1}.

Next we discuss the synchronization properties of the kinetic equation \eqref{KLohe}. If $\ka=0$, then solutions to \eqref{KLohe}Êsatisfy
\[
f(t)=\Phi_t\#f(0)\quad\hbox{ where }\Phi(t):\,(U,A)\mapsto(e^{tA}U,A).
\]
Unless $f$ is concentrated on some $A_0\in\fsu(d)$, this dynamics is dispersive, since, for different choices of $A$, the same $U$ is sent to different elements of $\bU(d)$ at the same time $t\not=0$ by the dynamics above. In other
words, the term $\Div_U(fAU)$ acts against synchronization. On the contrary, the coupling force in \eqref{KLohe}, i.e. $\la V\ra U^*-U\la V\ra^*$ vanishes if the matrix $S:=\la V\ra U^*$ is self-adjoint. For instance, if $S$ is positive 
definite, $U$ is the unitary matrix in the polar decomposition of $S$, and is therefore uniquely determined. This suggests that in the case $\ka\gg 1$, one should see that the dynamics of \eqref{KLohe} forces all the $U$ in the first
projection of $\Supp(f(t))$ in the cartesian product $\bU(d)\times\fsu(d)$ to concentrate in some sense around a single (possibly moving) element of $\bU(d)$.

In Section \ref{sec:4}, we state and prove a first result on synchronization for solutions to the Lohe mean-field, kinetic equation \eqref{KLohe} that are concentrated on $\bU(d)\times\{A_0\}$ for some $A_0\in\fsu(d)$, i.e. is of the form
\[
\rho(t)\otimes\de_{A_0}.
\]
Provided that the diameter of the support of $\rho(0))$ is smaller than some explicit threshold, we prove that $\Diam(\Supp(\rho(t)))\to 0$ as $t\to+\infty$, a property referred to as complete synchronization: see Theorem \ref{T-CSynchro}. 
We also provide a convergence rate for $\Diam(\Supp(\rho(t)))\to 0$ as $t\to+\infty$. In view of the heuristic argument above, the condition that $\Diam(\Supp(\rho)))$ is small enough implies that the average $\la V\ra$ is invertible, which
implies in turn that the polar decomposition of $\la V\ra$ is unique. For instance, one seeks to avoid the case where $\rho$ is the normalized Haar measure on $\bU(d)$, for which $\la V\ra=0$. 

In Section \ref{sec:5}, we state and prove a second result on synchronization for solutions to the Lohe mean-field, kinetic equation \eqref{KLohe}, assuming that the diameter of the second projection of $\Supp(f(t))$ in the cartesian 
product $\bU(d)\times\fsu(d)$ is smaller than some threshold proportional to the coupling strength $\kappa$. (The case studied in Section \ref{sec:4} corresponds to setting this diameter to $0$.) In the case where this diameter is 
positive but small enough, we establish a weaker variant of synchronization, called ``practical synchronization'' (see Definition \ref{D-PSynchro}). Specifically, assuming that $\Diam(\Supp(f^0))$ is small enough, we establish a bound 
of the form
\[
\varlimsup_{t\to+\infty}\iint_{(\bU(d)\times\fsu(d))^2}\|U-V\|_2^2f_\ka(t,dUdA)f_\ka(t,dVdB)=O(1/\ka)
\]
for the family $f_\ka$ of solutions to \eqref{KLohe} with initial data $f^0$, a compactly supported probability density on $\bU(d)\times\fsu(d)$. This indicates that, in the large time limit, the first projection of $\Supp(f_\ka(t))$ has a
tendency to shrink in the long time limit as the coupling strength $\ka$ increases to $+\infty$. In other words, even in the presence of the dispersing term $\Div_U(fAU)$, the interaction term in \eqref{KLohe} drives the mean-field
dynamics towards some form of synchronization.

Before discussing the mean field limit in Section \ref{sec:3}, we review some basic properties of the Lohe lattice in Section \ref{sec:2}, and explain how it is related to other models for synchronization, especially to the famous 
Kuramoto lattice \cite{Ku1,Ku2}. We have moved to the Appendix (a) the proof of existence and uniqueness of the solution to the Cauchy problem for the Lohe kinetic equation, and (b) one key estimate used to establish 
synchronization in both Section \ref{sec:4} and Section \ref{sec:5}.


\section{The Lohe Model and its Basic Properties} \label{sec:2}

\subsection{The Lohe matrix model} \label{sec:2.1} 


In \cite{Lo-1,Lo-2}, Lohe introduced a continuous dynamical system for $N$ ``noncommutative'' oscillators lying in $\bU(d)$. Denote by $U_j = U_j(t)\in M_d(\bbc)$ the state of the $j$-th Lohe oscillator. The dynamics of $U_j$ is governed by 
the following Cauchy problem:
\begin{equation}\label{Lohe-1}
\left\{\begin{aligned}  
\displaystyle {\mathrm i} \dot{U}_j U_j^*&=H_j + \frac{{\mathrm i} \kappa}{2N}\sum_{k= 1}^{N} \left(U_k U_j^* - U_j U_k^*  \right), \quad  t > 0, \quad j=1,\cdots,N, 
\\
U_j(0) &= U^0_j \in \bU(d),
\end{aligned}
\right.
\end{equation}
where $H_j=H^*_j\in M_d(\bbc)$, while $\kappa >0$ is the rescaled coupling strength (in units of the mean free time). For $U,V\in\bU(d)$, we also define a synchronization coupling between $U$ and $V$ via the formula
\[  
K(U,V) :=UV^*-VU^*.
\]
It is easy to see that
\[  
K(U, V)^*  = -K(U,V),\quad\text{ i.e. }\,\,K(U, V) \in \fsu(d). 
\]

We rewrite the Lohe model \eqref{Lohe-1} in the equivalent form:
\begin{equation} \label{Lohe-2}
{\dot U}_j= \left[ -{\mathrm i}H_j +\frac{\kappa}{2N}\sum_{k=1}^N K(U_k,U_j)  \right] U_j =\cA_j(U_1,\ldots,U_N)U_j,
\end{equation}
where $\cA_j$ is given by following formula:
\[
 \cA_j(U_1,\ldots,U_N):=-{\mathrm i}H_j +\frac{\kappa}{2N}\sum_{k=1}^N K(U_k,U_j).  
 \]
 Since $\cA_j$ is a quadratic function of its arguments for each $j=1,\ldots,N$, it is in particular a $C^1$ map on $M_d(\bbc)$, so that the Cauchy problem \eqref{Lohe-2} has a unique local solution for each $N$-tuple of initial data
 $U_1^0,\ldots,U_N^0\in M_d(\bbc)$. 
 
Obviously
\[  
\cA_j(U_1,\ldots,U_N)=-\cA_j(U_1,\ldots,U_N)^*\in\fsu(d),
\]
so that the group $\bU(d)$ is invariant under the Lohe flow defined by \eqref{Lohe-2}:
\[
U_j^0 \in\bU(d) \implies U_j(t)\in\bU(d)\quad\hbox{ for all }t\in\bbr \hbox{ and }~~j=1,\ldots,N.
\]
Since $\bU(d)$ is compact, this implies that the solution of the Cauchy problem for \eqref{Lohe-2} with initial data $U_1^0,\ldots,U_N^0\in\bU(d)$ is defined for all $t\in\bbr$. We refer to \cite{H-R} for the existence of phase-locked states 
and orbital stability of \eqref{Lohe}.

We next study the solution operator splitting for \eqref{Lohe-1} in the equal Hamiltonian case $H_j = H$ for $j=1,\ldots,N$:
\begin{equation} \label{Lohe-id}
\displaystyle \dot{U}_j  = -{\mathrm i} H U_j + \frac{\kappa}{2N}\sum_{k = 1}^{N} (U_k  - U_j U_k^* U_j ), \quad j =1, \cdots, N.
\end{equation}
Let ${\mathcal R}(t)$ and ${\mathcal L}(t)$ be the two solution operators corresponding to the following systems, respectively:
\begin{equation} \label{S-L}
\begin{cases}
\displaystyle \dot{V}_j  = -{\mathrm i} H V_j, 
\\
\displaystyle \dot{W}_j  =  \frac{\kappa}{2N}\sum_{k = 1}^{N} (W_k  - W_j W_k^* W_j ).
\end{cases}
\end{equation}
Denote
\[ 
{\mathcal R}(t) {\mathcal V}^0  := (e^{-{\mathrm i} H t} V_1^0, \cdots, e^{-{\mathrm i} H t} V_N^0), \qquad  {\mathcal L}(t) {\mathcal W}^0  := (W_1(t), \cdots, W_N(t)), \quad t \geq 0. 
\]
In that case, the solution operator to \eqref{Lohe-id} is the composition of the solution operators ${\mathcal S}(t)$ and ${\mathcal L}(t)$.

\begin{Prop} \label{P2.1}
Let ${\mathcal S}(t)$ be the solution operator to \eqref{Lohe-1}. Then
\[ 
{\mathcal S}(t) =  {\mathcal R}(t) \circ {\mathcal L}(t),\quad\hbox{Êfor all }t\in\bbr.
\]    
\end{Prop}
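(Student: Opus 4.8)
The plan is to verify the operator splitting identity $\mathcal{S}(t)=\mathcal{R}(t)\circ\mathcal{L}(t)$ directly, by showing that the right-hand side solves the Cauchy problem \eqref{Lohe-id} and then invoking uniqueness. Since both operators refer to the equal-Hamiltonian system, fix initial data $\mathcal{U}^0=(U^0_1,\dots,U^0_N)\in\bU(d)^N$. First I would set $(W_1(t),\dots,W_N(t)):=\mathcal{L}(t)\mathcal{U}^0$, so that each $W_j$ solves the second equation in \eqref{S-L} with $W_j(0)=U^0_j$, and then define $U_j(t):=e^{-\mathrm{i}Ht}W_j(t)$, i.e. $(U_1(t),\dots,U_N(t))=\mathcal{R}(t)\circ\mathcal{L}(t)\mathcal{U}^0$. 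Clearly $U_j(0)=U^0_j$, so it remains only to check that this $N$-tuple satisfies \eqref{Lohe-id}.

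The key computation is to differentiate $U_j(t)=e^{-\mathrm{i}Ht}W_j(t)$ in time:
\[
\dot U_j=-\mathrm{i}H e^{-\mathrm{i}Ht}W_j+e^{-\mathrm{i}Ht}\dot W_j=-\mathrm{i}H U_j+e^{-\mathrm{i}Ht}\cdot\frac{\kappa}{2N}\sum_{k=1}^N\bigl(W_k-W_jW_k^*W_j\bigr).
\]
The point is then to rewrite $e^{-\mathrm{i}Ht}(W_k-W_jW_k^*W_j)$ in terms of the $U_\ell$'s. Using $W_\ell=e^{\mathrm{i}Ht}U_\ell$ and the fact that $e^{\mathrm{i}Ht}$ is unitary (since $H=H^*$), so $(e^{\mathrm{i}Ht})^*=e^{-\mathrm{i}Ht}$, we get $W_k=e^{\mathrm{i}Ht}U_k$ and $W_jW_k^*W_j=e^{\mathrm{i}Ht}U_j(e^{\mathrm{i}Ht}U_k)^*e^{\mathrm{i}Ht}U_j=e^{\mathrm{i}Ht}U_jU_k^*e^{-\mathrm{i}Ht}e^{\mathrm{i}Ht}U_j=e^{\mathrm{i}Ht}U_jU_k^*U_j$. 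Hence $e^{-\mathrm{i}Ht}(W_k-W_jW_k^*W_j)=U_k-U_jU_k^*U_j$, and therefore
\[
\dot U_j=-\mathrm{i}HU_j+\frac{\kappa}{2N}\sum_{k=1}^N\bigl(U_k-U_jU_k^*U_j\bigr),
\]
which is exactly \eqref{Lohe-id}.

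To finish, I would note that $\mathcal{R}(t)\circ\mathcal{L}(t)\mathcal{U}^0$ and $\mathcal{S}(t)\mathcal{U}^0$ are two solutions of the Cauchy problem for \eqref{Lohe-id} with the same initial data; since the right-hand side of \eqref{Lohe-id} is a quadratic, hence locally Lipschitz, function of $(U_1,\dots,U_N)$ and — as noted in the excerpt — the solution exists globally on $\bbr$ and stays in $\bU(d)^N$, the Cauchy--Lipschitz uniqueness theorem forces $\mathcal{R}(t)\circ\mathcal{L}(t)\mathcal{U}^0=\mathcal{S}(t)\mathcal{U}^0$ for all $t\in\bbr$. Since $\mathcal{U}^0$ was arbitrary, the operator identity $\mathcal{S}(t)=\mathcal{R}(t)\circ\mathcal{L}(t)$ holds.

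There is no real obstacle here: the only thing that must be handled carefully is the algebraic identity $W_jW_k^*W_j=e^{\mathrm{i}Ht}U_jU_k^*U_j$, which is where the specific ``sandwich'' structure $W_jW_k^*W_j$ of the nonlinearity in \eqref{S-L} interacts correctly with conjugation by the unitary $e^{-\mathrm{i}Ht}$ — this is precisely why the splitting works for \eqref{Lohe-id} with equal Hamiltonians but would fail for a generic reaction term. One should also be mildly careful about the ordering of the two operators (the linear Schrödinger flow $\mathcal{R}$ is applied \emph{after} the coupling flow $\mathcal{L}$), which is dictated by the computation above.
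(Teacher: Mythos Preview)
Your proof is correct and follows essentially the same approach as the paper: a direct computation showing that conjugation by the unitary $e^{-\mathrm{i}Ht}$ intertwines the coupling-only flow $\mathcal{L}$ with the full flow $\mathcal{S}$, using the key identity $W_jW_k^*W_j=e^{\mathrm{i}Ht}U_jU_k^*U_j$ (equivalently $W_k^*W_j=U_k^*U_j$). The only cosmetic difference is direction---the paper starts from a solution $U_j$ of \eqref{Lohe-id} and shows $W_j:=e^{\mathrm{i}Ht}U_j$ satisfies $\eqref{S-L}_2$, whereas you start from $W_j$ and show $U_j:=e^{-\mathrm{i}Ht}W_j$ satisfies \eqref{Lohe-id}---but the algebraic content and the implicit appeal to Cauchy--Lipschitz uniqueness are the same.
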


\begin{proof}
Let $(U_1,\ldots,U_N)$ be a solution to \eqref{Lohe-1}, and set 
\[
W_j(t) := e^{{\mathrm i} tH}U_j(t).
\]
Then
\[
\ba
\dot W_j(t)=&iHW_j(t)+e^{{\mathrm i} tH}\dot U_j(t)
\\
=&iHW_j(t)+e^{{\mathrm i} tH}\left[-iHU_j(t)+\frac{\kappa}{2N}\sum_{k = 1}^{N} (U_k(t)  - U_j(t) U_k(t)^* U_j(t) )\right]
\\
=&iHW_j(t)+\left[-iHW_j(t)+\frac{\kappa}{2N}\sum_{k = 1}^{N} (W_k(t)  - W_j(t) U_k(t)^* U_j(t) )\right]
\\
=&\frac{\kappa}{2N}\sum_{k = 1}^{N} (W_k(t)  - W_j(t) W_k(t)^* W_j(t) ),
\ea
\]
since
\[
W_k(t)^* W_j(t)=U_k(t)^*e^{-{\mathrm i} tH}e^{{\mathrm i} tH}U_j(t)=U_k(t)^*U_j(t).
\]
In other words, $(W_1,\ldots,W_N)$ satisfies $\eqref{S-L}_2$, i.e.
$$
(W_1(t),\ldots,W_N(t))=\mathcal{R}(-t)(U_1(t),\ldots,U_N(t))=\mathcal{L}(t)(U_1^0,\ldots,U_N^0),
$$
for all $t\in\bbr$, which is the sought identity.
\end{proof}

\subsection{Other synchronization models} \label{sec:2.2} 


In this subsection, we briefly discuss how the Lohe model is related to other synchronization models known in literature. 

We first consider the case of dimension $d=1$. In this case, for each $j=1,\ldots,N$, the unitary matrix $U_j$ is a complex number with modulus one, while $H_j$ is real number. Thus, we set
\begin{equation}\label{ans}
U_j(t):= e^{-{\mathrm i} \theta_j(t)}, \quad H_j := \nu_j,\qquad\theta_j(t),\,\,\nu_j \in \bbr,
\end{equation}
and substitute the ansatz \eqref{ans} into \eqref{Lohe} to obtain
\[
{\dot \theta}_j = \nu_j + \frac{\kappa}{N} \sum_{k=1}^{N} \sin (\theta_k - \theta_j), \quad j =1, \cdots, N,
\]
which is the well-known Kuramoto model  \cite{A-B, Ku1, Ku2}. In other words, the Lohe matrix model \eqref{Lohe} can be viewed as a nonabelian generalization of the Kuramoto model.

\medskip
As a next level of simplification, we consider the case $d=2$ in \eqref{Lohe}. In this case, we use the following parametrization of $U_i$ in terms of Pauli's matrices $\{ \sigma_k \}_{k=1}^{3}$:
\[ 
U_j:= e^{-{\mathrm i} \theta_j} \Big( {\mathrm i} \sum_{k=1}^{3} x_j^k \sigma_k + x_j^4 I_2  \Big) = e^{-{\mathrm i} \theta_j}
\left( \begin{array}{cc}
x_j^4 + {\mathrm i} x_j^1 & x_j^2 + {\mathrm i} x_j^3  
\\
-x_j^2 + {\mathrm i} x_j^3 & x_j^4 - {\mathrm i} x_j^1
\\
\end{array} \right), 
\]
where $I_2$ and $\sigma_n$ with $n=1,2,3$ are the identity matrix and Pauli matrices, respectively, defined by
\[
I_2 := \left( \begin{array}{cc}
  1 & 0 \\
  0 & 1 \\
  \end{array} \right), \quad 
  \sigma_1 := \left( \begin{array}{cc}
  1 & 0 \\
  0 & -1 \\
  \end{array} \right), \quad 
  \sigma_2 :=  \left( \begin{array}{cc}
  0 & -{\mathrm i} \\
  {\mathrm i} & 0 \\
  \end{array} \right), \quad 
  \sigma_3 := \left( \begin{array}{cc}
  0 & 1 \\
  1 & 0 \\
  \end{array} \right). 
\]
In this model, the matrices $U_j(t)$ are not necessarily unitary
We also expand $H_i$:
\[ 
H_j = \sum_{n=1}^{3} \omega_j^n \sigma_n + \nu_j I_2, 
\]
where $\omega_j = (\omega_j^1, \omega_j^2, \omega_j^3)\in\bbr^3$, and the natural frequency $\nu_j$ is associated with the $\bU(1)$ component of $U_j$, i.e. $e^{-\mathrm{i}\theta_j(t)}$. 

After some tedious algebraic manipulations, we obtain $5N$ equations for the angles $\theta_j$ and the four-vectors $x_j$:
\begin{align}
\begin{aligned} \label{Aug-Lohe}
||x_j||^2 {\dot \theta}_j&= \nu_j + \frac{\kappa}{N} \sum_{k=1}^{N} \sin (\theta_k - \theta_j) \langle x_j|x_k \rangle,~~1 \leq j \leq N,~t \in \bbr, 
\\
||x_j||^2 {\dot x}_j &= \Omega_j x_j + \frac{\kappa}{N} \sum_{k=1}^{N} \cos(\theta_k - \theta_j) (||x_j||^2 x_k -\langle x_j|x_k \rangle x_j),
\end{aligned}
\end{align}
where $\Omega_j$ is a real $4 \times 4$ skew-symmetric matrix:
\[ 
\Omega_i := \left( \begin{array}{cccc}
0 & -\omega_i^3 & \omega_i^2 & -\omega_i^1
\\
\omega_i^3 & 0 & -\omega_i^1 & -\omega_i^2 
\\
-\omega_i^2 & \omega_i^1 & 0 & -\omega_i^3 
\\
\omega_i^1 & \omega_i^2 &\omega_i^3 & 0 
\\
\end{array}
\right),
\]
and where $\langle\cdot|\cdot\rangle$ is the standard inner product in $\bbr^4$, while $\|x\|^2=\langle x|x\rangle$.

We take  $\theta_j = 0$ and $\nu_j = 0$ in \eqref{Aug-Lohe} to obtain the swarming model \cite{Ol}:
\begin{equation} \label{N-1}
||x_j||^2 {\dot x}_j = \Omega_j x_j + \frac{\kappa}{N} \sum_{k=1}^{N} ( ||x_j||^2 x_k - \langle x_j|x_k \rangle x_j), \quad j =1, \cdots, N.
\end{equation}

The emergence of phase-locked states for systems \eqref{Aug-Lohe} and \eqref{N-1} have already been investigated by the second author and his collaborators, e.g., \cite{C-C-H, C-H5}. 

For the generalizations of this type of model on some Riemannian manifolds and for a gradient flow formulation of the Lohe matrix model, we refer to \cite{H-K-R1, H-K-R2}.


\section{The Mean-Field Limit for the Lohe Matrix Model} \label{sec:3}


In this section, we present a derivation of a kinetic Lohe equation on the single-oscillator phase space $\bU(d)\times\fsu(d)$ from the Lohe matrix model. Specifically, the Lohe kinetic equation governs the mean-field limit of the Lohe matrix 
model in the large $N$ (number of oscillators) limit. We also provide a convergence rate estimate for this mean-field limit. 

\subsection{Preliminaries} \label{ssec:Prelim} 


For later use, we first recall some basic elements of analysis on the Lie group $\bU(d)$.

\begin{Def} \label{D3.1} A continuous vector field $X$ on $\bU(d)$ is a continuous section of the tangent bundle $T\bU(d)$, or, equivalently, a map $X:\,\bU(d)\to M_d(\bbc)$ of the form
\[
X_U=A(U)U\,,\quad\hbox{ where }~A: \bU(d)\ni U\mapsto A(U)\in\fsu(d)~~\hbox{is continuous}.
\]
For each $f\in\cP(\bU(d))$ and each continuous vector field $X$ on $\bU(d)$, one defines $\Div(fX)$ as the linear functional on $C^1(\bU(d))$ such that
\begin{equation} \label{C-0}
\la\Div(fX),\phi\ra :=-\int_{\bU(d)}(d_U\phi,X_U)f(dU)
\end{equation}
for each test function $\phi\in C^1(\bU(d))$. In this formula, $\la\cdot,\cdot\ra$ designates the duality between distribution densities and test functions on $\bU(d)$, while $(\cdot,\cdot)$ designates the duality between the space $T^*_U \bU(d)$ 
of tangent covectors and the space $T_U \bU(d)$ of tangent vectors to $\bU(d)$ at the point $U$.
\end{Def}

\smallskip
We shall often encounter the special case where the test function is of the form $\phi(U):=\|U-V\|_2^2$, where $V\in\bU(d)$ is given. Then
\begin{equation}\label{R3.1}
\la\Div(fX),\phi\ra=\int_{\bU(d)}\Tr(V^*A(U)U-U^*A(U)V)f(dU)\,.
\end{equation}
Indeed, applying \eqref{C-0} shows that
\[
\la\Div(fX),\phi\ra=-\int_{\bU(d)}\Tr(X^*_U(U-V)+(U-V)^*X_U)f(dU)\,.
\]
Since $A(U)^*=-A(U)$, one has
\begin{align*}
\begin{aligned}
& \Tr(X^*_U(U-V)+(U-V)^*X_U)
\\
& \hspace{0.5cm} =\Tr(U^*A(U)^*(U-V)+(U-V)^*A(U)U)
\\
& \hspace{0.5cm} =\Tr(U^*A(U)V-V^*A(U)U),
\end{aligned}
\end{align*}
which implies \eqref{R3.1}.

\subsection{The Lohe kinetic equation} \label{ssec:LoheKinEq} 


In this section, we give a ``formal'' derivation of the kinetic Lohe equation from the Lohe matrix model \eqref{Lohe} by a closure ansatz on the first equation in the BBGKY hierarchy. 

We first introduce the Liouville equation associated to the Lohe matrix model. Let
\[ 
F_N:\equiv F_N(t,dU_1dA_1\ldots dU_NdA_N)
\]
be a  time-dependent Borel probability measure on $\bU(d)^N\times\fsu(d)^N$ (where $t$ is the time variable). The Liouville equation is
\begin{equation} \label{Lio}
\d_tF_N+\sum_{j=1}^N\Div_{U_j}(F_NA_jU_j)+\frac{\kappa}{2N}\sum_{j=1}^N\Div_{U_j}\left(F_N\sum_{k=1}^N K(U_k,U_j)U_j\right)=0,
\end{equation}
where $A_j:=-{\mathrm i}H_j=-A_j^*$ for $j=1,\ldots,N$. The Liouville equation \eqref{Lio} is related to the Lohe matrix model by the following prescription. Call $\mathcal{S}[A_1,\ldots,A_N](t)$ the solution operator of \eqref{Lohe-1} with 
$A_j=-{\mathrm i}H_j$ for $j=1,\ldots,N$, and set
\[
\tilde{\mathcal{S}}(t):\,(U_1^0,A_1,\ldots,U_N^0,A_N)\mapsto(U_1(t),A_1,\ldots,U_N(t),A_N),
\]
where
\[
(U_1(t),\ldots,U_N(t)):=\mathcal{S}[A_1,\ldots,A_N](t)(U_1^0,\ldots,U_N^0).
\]

Then $t\mapsto F(t,dU_1dA_1,\ldots,dU_NdA_N)$ is a (weak) solution to the Liouville equation if and only if 
\[
F_N(t)=\tilde{\mathcal{S}}(t)\# F_N(0).
\]
(Apply Lemmas 8.1.4, 8.1.6 and Proposition 8.1.7 in \cite{AmbrosioGigliSavare}, after observing that the map
\[
U_j\mapsto A_jU_j+\tfrac{\kappa}{2N}\sum_{k=1}^NK(U_k,U_j)U_j
\]
is Lipschitz continuous on $\bU(d)$ for each $A_j\in M_d(\bbc)$.)

For each $\si \in \fS_N$, let $\cT_\si$ be the transformation defined on $(\bU(d)\times\fsu(d))^N$ by the formula:
\[
\cT_\si(U_1,,A_1,\ldots,U_N,A_N):=(U_{\si^{-1}(1)},A_{\si^{-1}(1)},\ldots,U_{\si^{-1}(N)},A_{\si^{-1}(N)}). 
\]

\begin{Lem} \label{L3.1}
For each $\si\in \fS_N$ and each $t\ge 0$, we have
\[
\cT_\si\#F_N(0)=F_N(0)\implies\cT_\si\#F_N(t)=F_N(t)\quad\hbox{ for all }t\ge 0.
\]
\end{Lem}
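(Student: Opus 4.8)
The plan is to exploit the characterization of solutions to the Liouville equation as push-forwards of the initial data by the (extended) Lohe flow, namely $F_N(t)=\tilde{\mathcal{S}}(t)\#F_N(0)$, combined with the permutation symmetry of the Lohe system \eqref{Lohe-1} itself. Concretely, first I would observe that the right-hand side of \eqref{Lohe-1} is equivariant under relabelling of the oscillators: if $(U_1,\dots,U_N)$ solves \eqref{Lohe-1} with Hamiltonians $H_1,\dots,H_N$, then for any $\si\in\fS_N$ the permuted tuple $(U_{\si^{-1}(1)},\dots,U_{\si^{-1}(N)})$ solves the same system with the correspondingly permuted Hamiltonians $H_{\si^{-1}(1)},\dots,H_{\si^{-1}(N)}$. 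This is because the coupling sum $\sum_k K(U_k,U_j)$ runs over all $k$ and is therefore insensitive to relabelling, so only the natural-frequency term and the index $j$ get permuted. Writing this in terms of $A_j=-{\mathrm i}H_j$ and the extended flow $\tilde{\mathcal{S}}(t)$ on phase space, the statement becomes the intertwining relation
\[
\tilde{\mathcal{S}}(t)\circ\cT_\si=\cT_\si\circ\tilde{\mathcal{S}}(t)\quad\text{on }(\bU(d)\times\fsu(d))^N,\qquad t\in\bbr,
\]
valid for every $\si\in\fS_N$, since $\cT_\si$ permutes both the $U$-variables and the $A$-variables consistently (the $A$-variables are simply frozen parameters carried along by the flow).

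Granting this intertwining identity, the conclusion follows by a routine functoriality computation for push-forwards: assuming $\cT_\si\#F_N(0)=F_N(0)$, we get
\[
\cT_\si\#F_N(t)=\cT_\si\#\big(\tilde{\mathcal{S}}(t)\#F_N(0)\big)=(\cT_\si\circ\tilde{\mathcal{S}}(t))\#F_N(0)=(\tilde{\mathcal{S}}(t)\circ\cT_\si)\#F_N(0)=\tilde{\mathcal{S}}(t)\#F_N(0)=F_N(t),
\]
using $(\Phi\circ\Psi)\#\mu=\Phi\#(\Psi\#\mu)$ twice together with the intertwining relation in the middle step. The only non-bookkeeping ingredient is therefore the intertwining identity, which itself reduces to uniqueness for the Cauchy problem for \eqref{Lohe-1}: both $t\mapsto\tilde{\mathcal{S}}(t)(\cT_\si(\vec U^0,\vec A))$ and $t\mapsto\cT_\si(\tilde{\mathcal{S}}(t)(\vec U^0,\vec A))$ are solutions of the permuted system with the same initial datum $\cT_\si(\vec U^0,\vec A)$, hence they coincide by the Cauchy-Lipschitz uniqueness already invoked in the excerpt.

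I would present this as: (i) state and verify the symmetry of the vector field under $\cT_\si$ (a one-line check on the structure of \eqref{Lohe-1}, noting that the $\tfrac{\kappa}{2N}\sum_k$ term is permutation-invariant in $k$); (ii) deduce the flow-level intertwining $\tilde{\mathcal{S}}(t)\circ\cT_\si=\cT_\si\circ\tilde{\mathcal{S}}(t)$ from uniqueness; (iii) conclude via the push-forward chain rule. The main (and really only) subtlety to be careful about is keeping the permutation of the $A_j$'s synchronized with the permutation of the $U_j$'s — that is, one must permute the frequencies along with the states, which is exactly what $\cT_\si$ does by construction, so the symmetry holds on the extended phase space even though the finite system \eqref{Lohe-1} has distinct Hamiltonians $H_j$. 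There is no genuine analytic difficulty here; the lemma is essentially a statement that exchangeability of the initial configuration is propagated by a permutation-equivariant flow.
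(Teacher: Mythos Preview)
Your proposal is correct and follows essentially the same approach as the paper: verify that the Lohe vector field is equivariant under $\cT_\si$ (because the coupling sum over $k$ is permutation-invariant), deduce the intertwining relation $\tilde{\mathcal{S}}(t)\circ\cT_\si=\cT_\si\circ\tilde{\mathcal{S}}(t)$ from Cauchy--Lipschitz uniqueness, and conclude by functoriality of push-forwards. The paper spells out the equivariance check slightly more explicitly but the argument is the same.
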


\begin{proof}
If $(U_1(t),\ldots,U_N(t))$ is a solution to \eqref{Lohe-2}, then
\[
\begin{aligned}
{\dot U}_{\si^{-1}(j)}=& A_{\si^{-1}(j)}U_{\si^{-1}(j)} +\frac{\kappa}{2N}\sum_{k=1}^N (U_k-U_{\si^{-1}(j)}U_k^*U_{\si^{-1}(j)})
\\
=& A_{\si^{-1}(j)}U_{\si^{-1}(j)} +\frac{\kappa}{2N}\sum_{k=1}^N (U_{\si^{-1}(k)}-U_{\si^{-1}(j)}U_{\si^{-1}(k)}^*U_{\si^{-1}(j)}),
\end{aligned}
\]
since
\[
\sum_{k=1}^N U_{\si^{-1}(k)}=\sum_{k=1}^N U_k\quad\hbox{ and }\quad\sum_{k=1}^N U_{\si^{-1}(k)}^*=\sum_{k=1}^N U_k^*.
\]
Therefore
\begin{align*}
\begin{aligned}
& \mathcal{S}[A_1,\ldots,A_N](t)(U^0_1,\ldots,U^0_N)=(U_1(t),\ldots,U_N(t))
\\
& \implies\mathcal{S}[A_{\si^{-1}(1)},\ldots,A_{\si^{-1}(N)}](t)(U^0_{\si^{-1}(1)},\ldots,U^0_{\si^{-1}(N)}) =(U_{\si^{-1}(1)}(t),\ldots,U_{\si^{-1}(N)}(t)).
\end{aligned}
\end{align*}
In other words,
\[
\begin{aligned}
\tilde{\mathcal{S}}(t)(\mathcal{T}_\si(U^0_1,A_1,\ldots,U^0_N,A_N))=&(U_{\si^{-1}(1)}(t),A_1,\ldots,U_{\si^{-1}(N)}(t),A_N)
\\
=&\mathcal{T}_\si(\mathcal{S}(t)(U^0_1,A_1,\ldots,U^0_N,A_N)),
\end{aligned}
\]
so that
\[
\cT_\si\#F_N(t)=\cT_\si\#(\mathcal{S}(t)\#F_N(0))=\mathcal{S}(t)\#(\cT_\si\#F_N(0))=\mathcal{S}(t)\#F_N(0)=F_N(t)
\]
for all $t\in\bbr$.
\end{proof}

\smallskip
Next, we consider the sequence of marginals of $F_N(t)$, defined as follows. 

\begin{Def}
Let $P_N\in\cP((\bU(d)\times\fsu(d))^N)$ satisfy $\mathcal{T}_\si\#P_N=P_N$ for each $\si\in\fS_N$. For each $m=1,\ldots,N-1$, the $m$-th marginal of $P_N$, denoted by $P_{N:m}$, is the element of $\cP(\bU(d)^m\times\fsu(d)^m)$ defined 
by the formula
\begin{equation}\label{C-4}
\begin{aligned}
& \int_{(\bU(d)\times\fsu(d))^m}\phi(U_1,A_1,\ldots,U_m,A_m)P_{N:m}(dU_1dA_1\ldots dU_mdA_m) 
\\
&\hspace{0.5cm} =\int_{(\bU(d)\times\fsu(d))^N}\phi(U_1,A_1,\ldots,U_m,A_m)P_{N}(dU_1dA_1\ldots dU_NdA_N).
\end{aligned}
\end{equation}
\end{Def}

Next, we derive an equation for $F_{N:1}$ (the first equation in the BBGKY hierarchy). Multiplying each side of the Liouville equation \eqref{Lio} by $\phi(U_1,A_1)$ and integrating both sides of the resulting identity in $U_1$ and $A_1$, we
arrive at
\begin{align*}
\begin{aligned} 
& \frac{d}{dt}\int_{\bU(d)\times\fsu(d)} \phi(U_1,A_1)F_{N:1}(t,dU_1dA_1)
\\
& \hspace{0.5cm} =\int_{\bU(d)\times\fsu(d)}(d_{U_1}\phi,A_1U_1)F_{N:1}(t,dU_1dA_1)
\\
& \hspace{0.7cm} +\!\frac{\kappa}{2N}\!\int_{\bU(d)^N\times\fsu(d)^N}\!\!\left(d_{U_1}\phi,\sum_{k=1}^N K(U_k,U_1)U_1\!\right)\!F_N(t,dU_1\ldots dA_N).
\end{aligned}
\end{align*}
Assuming that $\cT_\si\#F_N(0)=F_N(0)$ for each $\si\in\fS_N$, and applying Lemma \ref{L3.1}, we see that
\begin{align}
\begin{aligned}  \label{C-5}
& \int_{(\bU(d)\times\fsu(d))^N} \left(d_{U_1}\phi, K(U_k,U_1)U_1\right)F_N(t,dU_1dA_1\ldots dU_NdA_N) 
\\
& \hspace{0.5cm} =\int_{\bU(d)^N\times\fsu(d)^N}\left(d_{U_1}\phi,K(U_2,U_1)U_1\right)F_N(t,dU_1dA_1\ldots dU_NdA_N) 
\\
& \hspace{0.5cm} =\int_{(\bU(d)\times\fsu(d))^2}\left(d_{U_1}\phi,K(U_2,U_1)U_1\right)F_{N:2}(t,dU_1dA_1dU_2dA_2).
\end{aligned}
\end{align}
Hence, it follows from \eqref{C-4} and \eqref{C-5} that
\[
\begin{aligned} 
\frac{d}{dt}\int&_{\bU(d)\times\fsu(d)}\phi(U_1,A_1)F_{N:1}(t,dU_1dA_1)
\\
=&\int_{\bU(d)\times\fsu(d)}(d_{U_1}\phi,A_1U_1)F_{N:1}(t,dU_1dA_1)
\\
&+\frac{\kappa (N-1)}{2N}\int_{(\bU(d)\times\fsu(d))^2}\left(d_{U_1}\phi, K(U_2,U_1)U_1\right)F_{N:2}(t,dU_1dA_1dU_2dA_2).
\end{aligned}
\]
Equivalently, we have\footnote{So far, we have used the notation $P_{2:1}$ to designate the first marginal of a symmetric Borel probability measure on $(\bU(d)\times\fsu(d))^2$. The term $F_{N:2} K(U_2,U_1)U_1$ is a matrix 
whose entries are complex measures on $(\bU(d)\times\fsu(d))^2$, and we designate by $(F_{N:2} K(U_2,U_1)U_1)_{:1}$ the matrix whose entries are the push-foward of the entries of $F_{N:2} K(U_2,U_1)U_1$ by the projection
map 
$(U_,A_1,U_2,A_2)\mapsto(U_1,A_1)$.}
\[
\d_tF_{N:1}+\Div_{U_1}(F_{N:1}A_1U_1)+\frac{\kappa (N-1)}{2N}\Div_{U_1}\left(F_{N:2} K(U_2,U_1)U_1\right)_{:1}=0.
\]
This is the first equation in the BBGKY hierarchy. Observe that this is not a closed equation for $F_{N:1}$ since it involves $F_{N:2}$.  

We now assume that
\[
\begin{aligned}
F_{N:1}(t,dU_1dA_1)&\wto f(t,dU_1dA_1),
\\
F_{N:2}(t,dU_1dU_2dA_1dA_2)&\wto f(t,dU_1dA_1)f(t,dU_2dA_2)
\end{aligned}
\]
as $N\to\infty$ in $L^\infty(\bbr;\cP(\bU(d)\times\fsu(d)))$ and $L^\infty(\bbr;\cP((\bU(d)\times\fsu(d))^2))$ weak-$*$ topology respectively. Then, the single-particle distribution function $f\equiv f(t,dUdA)$ satisfies
\begin{equation} \label{V-M}
\d_tf+\Div_{U}(fAU)+\frac{\kappa}{2}\Div_U\left(f\int_{\bU(d)}K(V,U)U\rho_f(t,dV)\right)=0,
\end{equation}
in ${\mathcal D}'(\bbr\times\bU(d)\times\fsu(d))$, where $\rho_f(t)$ is the Borel probability measure on $\bU(d)$ defined by 
\begin{equation}\label{DefRhof}
\int_{\bU(d)}\phi(U)\rho_f(t,dU)=\int_{\bU(d)\times\fsu(d)}\phi(U)f(t,dUdA)
\end{equation}
for all $\phi\in C(\bU(d))$. In other words, $\rho_f(t)$ is the push-forward of $f(t)$ by the map $\bU(d)\times\fsu(d)\ni(U,A)\mapsto U\in\bU(d)$.

The existence and uniqueness theory of a solution to the Cauchy problem for the Lohe kinetic equation \eqref{V-M} is summarized in the following proposition.

\begin{Prop}\label{P-LoheKinEq}
For all $f^0\in\cP_1(\bU(d)\times\fsu(d))$, the Cauchy problem for the kinetic Lohe equation \eqref{V-M} has a unique weak solution $f\in C([0,+\infty),(\cP_1(E),\MK))$ with initial data $f^0$.

Moreover, if $f^0\in\cP_p(\bU(d)\times\fsu(d))$ for some $p\ge 1$, then $f(t)\in\cP_p(\bU(d)\times\fsu(d))$ for all $t\ge 0$.
\end{Prop}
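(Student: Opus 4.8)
The plan is to solve the Cauchy problem for \eqref{V-M} by a Dobrushin-type fixed point argument on curves of probability measures, and then to read off the propagation of moments directly from the structure of the characteristics. Write $E:=\bU(d)\times\fsu(d)$, equipped with the canonical product metric (so that the projection $E\ni(U,A)\mapsto U\in\bU(d)$ is $1$-Lipschitz). For a continuous curve $t\mapsto\mu(t)\in\cP_1(E)$ with $\mu(0)=f^0$, set $m_\mu(t):=\int_E V\,\mu(t,dVdB)\in M_d(\bbc)$; since $\|V\|=1$ on $\bU(d)$ we have $\|m_\mu(t)\|\le 1$, and since $K(m_\mu(t),U)^*=-K(m_\mu(t),U)$ the time-dependent vector field
\[
X^\mu_t(U,A):=\big(A+\tfrac{\ka}{2}K(m_\mu(t),U)\big)U
\]
takes values in $T_U\bU(d)$, is Lipschitz in $U$ on the compact group $\bU(d)$, and treats $A$ as an inert parameter (there is no equation for $A$). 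It therefore generates a global flow $\Psi^\mu_t:\,(U^0,A)\mapsto(\Phi^{\mu,A}_t(U^0),A)$ on $E$, and we define $\Gamma\mu(t):=\Psi^\mu_t\#f^0$, which again belongs to $C([0,T];(\cP_1(E),\MK))$ and has value $f^0$ at $t=0$.

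The heart of the matter is a stability estimate for $\Gamma$. Given $\mu_1,\mu_2$ with common initial datum $f^0$, the Cauchy--Schwarz inequality \eqref{CSFrob} applied to $\rho\mapsto\int V\rho(dV)$ together with $1$-Lipschitzness of the projection $E\to\bU(d)$ gives $\|m_{\mu_1}(t)-m_{\mu_2}(t)\|_2\le\MK(\mu_1(t),\mu_2(t))$. For the characteristics issued from a common $(U^0,A)$, put $U_i(t):=\Phi^{\mu_i,A}_t(U^0)$ and differentiate $\|U_1(t)-U_2(t)\|_2^2$: the contributions of $A(U_1-U_2)$ and of $\tfrac{\ka}{2}K(m_{\mu_1}(t),U_1)(U_1-U_2)$ vanish, because $A$ and $K(m_{\mu_1}(t),U_1)$ are skew-Hermitian and hence $\Tr(W^*AW),\Tr(W^*K(m_{\mu_1}(t),U_1)W)\in\ii\bbr$; only the difference $K(m_{\mu_1}(t),U_1)-K(m_{\mu_2}(t),U_2)=K(m_{\mu_1}(t)-m_{\mu_2}(t),U_1)-K(U_1-U_2,m_{\mu_2}(t))$ survives. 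Using \eqref{IneqNorms} and $\|m_{\mu_i}\|,\|U_i\|\le 1$ this yields $\tfrac{d}{dt}\|U_1(t)-U_2(t)\|_2\le\ka(\|m_{\mu_1}(t)-m_{\mu_2}(t)\|_2+\|U_1(t)-U_2(t)\|_2)$, whence by Gronwall $\|U_1(t)-U_2(t)\|_2\le\ka e^{\ka t}\int_0^t\MK(\mu_1(s),\mu_2(s))\,ds$. Pushing forward the diagonal self-coupling of $f^0$ by $(\Psi^{\mu_1}_t,\Psi^{\mu_2}_t)$ (the $A$-components coincide, so only the $U$-cost contributes) gives
\[
\MK(\Gamma\mu_1(t),\Gamma\mu_2(t))\le\ka e^{\ka t}\int_0^t\MK(\mu_1(s),\mu_2(s))\,ds.
\]
On the complete metric space of curves in $C([0,T];(\cP_1(E),\MK))$ with prescribed value $f^0$ at $t=0$, this makes some iterate $\Gamma^{\circ n}$ a strict contraction, so $\Gamma$ has a unique fixed point $f$ on $[0,T]$; since $T$ is arbitrary and uniqueness forces compatibility on overlapping intervals, $f$ extends to a curve in $C([0,+\infty);(\cP_1(E),\MK))$.

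That the fixed point $f$ is a weak solution of \eqref{V-M} is the standard fact that the push-forward of a measure by the flow of a Lipschitz (and, since $t\mapsto m_f(t)$ is continuous, $t$-continuous) vector field solves the associated continuity equation; one applies Lemmas 8.1.4 and 8.1.6 and Proposition 8.1.7 of \cite{AmbrosioGigliSavare} on $\bU(d)$, with $A$ a parameter, exactly as for the Liouville equation \eqref{Lio}. For uniqueness one argues conversely: any weak solution $g\in C([0,+\infty);(\cP_1(E),\MK))$ is transported by its own characteristics, $g(t)=\Psi^{g}_t\#g(0)$ with $m_g(t)=\int_EV\,g(t,dVdB)$ — again a superposition/representation result for continuity equations with Lipschitz velocity field — so $g$ is a fixed point of $\Gamma$ and hence equals $f$. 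Finally, for $f^0\in\cP_p(E)$: along characteristics $A$ is constant, so the second marginal of $f(t)$ equals that of $f^0$ and $\int_E\|A\|_2^p f(t,dUdA)=\int_E\|A\|_2^p f^0(dUdA)<\infty$; since $\Diam(\bU(d))<\infty$, this gives $f(t)\in\cP_p(E)$ for every $t\ge 0$. The ODE/Gronwall estimates and the coupling bookkeeping are routine; the genuinely delicate point — the reason the argument is relegated to the Appendix — is the representation of an \emph{arbitrary} weak solution by its characteristic flow needed for uniqueness, i.e. checking that the field built from $m_g$ is regular enough (Lipschitz on $\bU(d)$, measurable in $t$) for the Ambrosio--Gigli--Savar\'e machinery, and that the marginals $\rho_{g(t)}$ depend continuously on $t$.
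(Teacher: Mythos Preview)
Your argument is correct and follows the same overall strategy as the paper's Appendix~\ref{App:A}: set up a fixed-point map on $C([0,T];(\cP_1(E),\MK))$ by pushing $f^0$ forward along the characteristics generated by a prescribed curve $\mu$, prove a stability estimate in $\MK$ via Gronwall, and extract existence and uniqueness from the contraction; the propagation of $p$-th moments via constancy of the $A$-component is identical to the paper's.

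Where you differ is in the stability estimate itself, and your route is more economical. The paper compares characteristics issued from possibly distinct initial points, decomposes $\tfrac{d}{dt}\|U-V\|_2^2$ into five pieces $\mathcal I_{21},\ldots,\mathcal I_{25}$, and invokes the algebraic Lemma~\ref{L-AuxComp} (developed for the synchronization estimates) to handle $\mathcal I_{23}+\mathcal I_{24}$; it then chooses $T$ small enough to make $\cT$ a strict contraction and concatenates. You instead start from the diagonal coupling of $f^0$ with itself, observe directly that the skew-Hermitian pieces $A(U_1-U_2)$ and $\tfrac{\ka}{2}K(m_{\mu_1},U_1)(U_1-U_2)$ contribute nothing to $\tfrac{d}{dt}\|U_1-U_2\|_2^2$, and bound only the residual $K$-difference; this bypasses Lemma~\ref{L-AuxComp} entirely and gives a cleaner Gronwall inequality. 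Your use of an iterated contraction $\Gamma^{\circ n}$ in place of small-$T$ restarts is standard and equivalent. You are also slightly more explicit than the paper about the uniqueness step (representing an arbitrary weak solution by its own characteristic flow so that it becomes a fixed point of $\Gamma$), which is indeed the point where the Ambrosio--Gigli--Savar\'e machinery is needed.
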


The proof of this proposition will be given in Appendix \ref{App:A}.

\smallskip
An important special case of the setting considered above is the case where
\[
\int_{\Om}f(0,dUdA)=0
\]
for each negligible set $\Omega$ in $\bU(d)\times\fsu(d)$ (viewed as a finite dimensional smooth manifold). In that case, set $\mu$ to be the normalized Haar measure\footnote{Except in Section \ref{sec:4}, one could equivalently replace 
$\mu$ with any Borel probability measure on $\bU(d)$ such that $\mu(O)>0$ for each open set $O\subset\bU(d)$ such that $O\not=\varnothing$.}, and let $\g$ be an arbitrary Borel probability measure on the linear space $\fsu(d)$ with 
positive density with respect to the Lebesgue measure. For instance, one can assume that $\g$ is the Gaussian measure on $\fsu(d)$ such that the real and imaginary parts of each subdiagonal entry and the imaginary part of each diagonal 
entry of an element of $\fsu(d)$ distributed under $\g$ are independent and distributed under the centered, reduced Gaussian probability measure on the real line.

Then $f(0)$ is absolutely continuous with respect to $\mu\otimes\g$, and therefore is of the form $f^{0}(U,A)\mu(dU)\g(dA)$, where $f^0$ is a probability density on $\bU(d)\times\fsu(d)$, by the Radon-Nikodym theorem. In that case, for all 
$t\ge 0$, one has
\[
f(t,dUdA)=f(t,U,A)\mu(dU)\g(dA), \quad t \geq 0, 
\]
where the function $f$ satisfies the equation
\begin{equation} \label{K-Lohe}
\d_tf+\Div_{U}(fAU)+\frac{\kappa}{2}\Div_U\left(f\int_{\bU(d)} K(V,U)U\rho_f(t,V)\mu(dV)\right)=0,
\end{equation}
with
\[
\rho_f(t,U):=\int_{\fsu(d)}f(t,U,A)\g(dA).
\]

\subsection{Convergence rate} \label{sec:3.2}


In this subsection, we study the convergence from $F_{N:1}$ to $f$ in the Monge-Kantorovich (or Wasserstein) distance of exponent $2$.  For the reader's convenience, we briefly recall the basic notions pertaining to this distance. 

\begin{Def}
Let $E$ be a (finite dimensional) unitary space, with norm denoted $\|\cdot\|$, and let $m_1,m_2\in\cP_2(E)$. Denote by $\Pi(m_1,m_2)$ the set of couplings of $m_1$ and $m_2$, i.e. the set of Borel probability measures $m$ on $E\times E$ 
such that
\[
\iint_{E\times E}(\phi(x)+\psi(y))m(dxdy)=\int_E\phi(x)m_1(dx)+\int_E\psi(y)m_2(dy)
\]
for all $\phi,\psi\in C_b(E)$. Then, the expression\footnote{Notice that $m_1\otimes m_2\in\Pi(m_1,m_2)$, so that $\Pi(m_1,m_2)\not=\varnothing$.} 
\[
\MKd(m_1,m_2):=\left(\inf_{m\in\Pi(m_1,m_2)}\iint_{E\times E}\|x-y\|^2m(dxdy)\right)^{1/2}\in[0,+\infty)
\]
defines a distance on the set of Borel probability measures on $E$ with finite second order moment.
\end{Def}

For more information on the distance $\MKd$, see Chapter 7 of the book \cite{VillaniAMS}, and Chapter 5 and Chapter 7 in \cite{AmbrosioGigliSavare}.

\medskip
Notice first that $f(t)^{\otimes N}$ satisfies
\[
\d_tf^{\otimes N}+\sum_{j=1}^N\Div_{U_j}(f^{\otimes N}A_jU_j)+\frac{\kappa}{2} \sum_{j=1}^N\Div_{U_j}\left(f^{\otimes N}\int_{\bU(d)} K(\hat U,U_j)U_j\rho_f(t,d\hat U)\right)=0,
\]
where $\rho_f$ is the average of $f$ in the variable $A$ defined in \eqref{DefRhof}. In fact, the following stronger property holds true.

\begin{Lem}\label{L-UniqVlasN}
Let $t\mapsto f(t)$ be a continuous map with values in the set of Borel probability measures on $\bU(d)\times\fsu(d)$ equipped with the weak topology of probability measures. The Cauchy problem
\[
\left\{
\begin{aligned}
{}&\d_tg_N+\sum_{j=1}^N\Div_{U_j}(g_NA_jU_j)+\frac{\kappa}{2}\sum_{j=1}^N\Div_{U_j}\left(g_N\int_{\bU(d)}K(\hat U,U_j)U_j\rho_f(t,d\hat U)\right)=0,
\\
&g_N=g_N^0,
\end{aligned}
\right.
\]
has a unique weak solution $t\mapsto g_N(t)$ that is continuous with values in the set of Borel probability measures on $(\bU(d)\times\fsu(d))^N$ equipped with the weak topology of probability measures.
\end{Lem}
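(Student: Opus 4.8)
\textbf{Proof proposal for Lemma \ref{L-UniqVlasN}.}

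The plan is to reduce the linear transport problem for $g_N$ to an application of the characteristics method, exactly as was done for the Liouville equation \eqref{Lio} right after its statement. First I would observe that, once the continuous curve $t\mapsto f(t)$ is fixed, the quantity $\la V\ra(t):=\int_{\bU(d)}\hat U\rho_f(t,d\hat U)$ is a fixed continuous $M_d(\bbc)$-valued function of $t$ with $\|\la V\ra(t)\|\le 1$ for all $t$; the PDE for $g_N$ is then a genuinely \emph{linear} continuity equation with a time-dependent but $g_N$-independent velocity field. Explicitly, writing the $j$-th component of that field as
\[
b_j(t,U_1,A_1,\ldots,U_N,A_N):=A_jU_j+\tfrac{\kappa}{2}\bigl(\la V\ra(t)U_j^*-U_j\la V\ra(t)^*\bigr)U_j\in T_{U_j}\bU(d),
\]
I would check that, for each fixed $t$ and each $(A_1,\ldots,A_N)$, the map $(U_1,\ldots,U_N)\mapsto(b_1,\ldots,b_N)$ is Lipschitz on $\bU(d)^N$ with a Lipschitz constant that is bounded uniformly in $t$ (it depends only on $\ka$, $d$, $N$, and $\sup_t\|\la V\ra(t)\|\le1$, together with the $A_j$'s which are frozen parameters because the field has no $A_j$-component). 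This is the same computation already invoked in the parenthetical remark after \eqref{Lio}.

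Next I would introduce the flow: for fixed $(A_1,\ldots,A_N)$, let $t\mapsto(U_1(t),\ldots,U_N(t))$ solve $\dot U_j=b_j(t,U_1,A_1,\ldots,U_N,A_N)$ with prescribed initial data in $\bU(d)^N$. Since each $b_j(t,\cdot)$ lies in $T_{U_j}\bU(d)$, i.e. is of the form $(\text{element of }\fsu(d))\,U_j$, the manifold $\bU(d)^N$ is invariant, so by compactness the solution is global; Cauchy--Lipschitz with the uniform-in-$t$ Lipschitz bound gives existence, uniqueness, and continuous dependence, hence a well-defined (time-inhomogeneous) flow
\[
\Psi_{s,t}:(U_1^0,A_1,\ldots,U_N^0,A_N)\mapsto(U_1(t),A_1,\ldots,U_N(t),A_N),
\]
which is a homeomorphism of $(\bU(d)\times\fsu(d))^N$ for each pair $s,t$. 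Then, precisely as in the discussion following \eqref{Lio} — invoking Lemmas 8.1.4 and 8.1.6 and Proposition 8.1.7 of \cite{AmbrosioGigliSavare} — one shows that $t\mapsto g_N(t)$ is a weak solution of the Cauchy problem with initial datum $g_N^0$ if and only if $g_N(t)=\Psi_{0,t}\#g_N^0$. This simultaneously gives existence (define $g_N(t)$ by that push-forward; continuity in the weak topology follows from continuity of $t\mapsto\Psi_{0,t}$ together with dominated convergence against bounded continuous test functions, using compactness of $\bU(d)^N$ to control the $U$-variables and the fact that the $A$-marginal is transported trivially) and uniqueness (any weak solution must coincide with this push-forward).

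The one genuinely nontrivial point — and the step I expect to be the main obstacle — is the \emph{uniqueness} half of the ``solution $=$ push-forward'' equivalence, because the $A$-variables range over the noncompact space $\fsu(d)^N$, so the cited superposition/uniqueness results from \cite{AmbrosioGigliSavare} are not quite off-the-shelf. There are two clean ways around this, and I would take whichever is shorter to write: (i) disintegrate $g_N(t)$ with respect to its $(A_1,\ldots,A_N)$-marginal, noting that this marginal is time-independent since the velocity field has no $A$-component, and apply the uniqueness theory on the \emph{compact} manifold $\bU(d)^N$ fiberwise, for $\g^{\otimes N}$-a.e. value of the frozen frequencies; or (ii) test against functions of the form $\phi(U_1,\ldots,U_N)\chi(A_1,\ldots,A_N)$ with $\chi$ compactly supported, which reduces all the estimates to a compact set in the $A$-variables and lets the standard argument run verbatim. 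Either way the remaining verifications — the uniform Lipschitz bound on $b_j$, tangency $b_j\in T_{U_j}\bU(d)$, and the weak-continuity of the push-forward — are routine and I would not belabor them.
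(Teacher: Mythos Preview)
Your proposal is correct and follows essentially the same route as the paper: verify that the velocity field is continuous in $t$ and Lipschitz in $U_j$ uniformly in $t$, then invoke Lemmas 8.1.4, 8.1.6 and Proposition 8.1.7 of \cite{AmbrosioGigliSavare} to identify weak solutions with push-forwards by the characteristic flow. The paper's proof is a two-line appeal to these same references; your version is more detailed and, in particular, explicitly flags and addresses the noncompactness of the $A$-variables via disintegration or compactly supported test functions, a point the paper leaves implicit.
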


\begin{proof}
For each $(A_1,\ldots,A_N)$, the map 
\[
(t,U_j)\mapsto\left(A_j+\tfrac{\kappa}2\int_{\bU(d)}K(\hat U,U_j)\rho_f(t,d\hat U)\right)U_j
\]
is continuous on $[0,+\infty)\times\bU(d)$, and Lipschitz continuous in $U_j$ uniformly in $t\ge 0$. The existence and uniqueness of $g_N$ follows from Lemmas 8.1.4, 8.1.6 and Proposition 8.1.7 in \cite{AmbrosioGigliSavare}.
\end{proof}

Let
\[
Q_N^{0}\in\Pi((f^0)^{\otimes N},(f^0)^{\otimes N}),
\]
and let $t\mapsto Q_N(t)\in\cP((\bU(d)\times\fsu(d))^N\times(\bU(d)\times\fsu(d))^N)$ be the weak solution of the Cauchy problem:
\begin{equation}\label{New-1}
\left\{\begin{aligned}
{}&\d_tQ_N+\displaystyle\sum_{j=1}^N\Div_{U_j}(Q_NA_jU_j)+\sum_{j=1}^N\Div_{V_j}(Q_NB_jV_j)
\\
&\hskip .85cm\displaystyle+\frac{\kappa}{2}\sum_{j=1}^N\Div_{U_j}\left(Q_N\int_{\bU(d)} K(\hat U,U_j)U_j\rho_f(t,d\hat U)\right) 
\\
&\hskip .85cm\displaystyle+\frac{\kappa}{2}\sum_{j=1}^N\Div_{V_j}\left(Q_N\frac1N\sum_{k=1}^N K(V_k,V_j)V_j\right)=0, \qquad\quad t > 0, 
\\
&Q_N\rstr_{t=0}=Q_N^{0}.
\end{aligned}
\right.
\end{equation}

\begin{Lem}\label{PropQN}
For each $t\ge 0$,
\[
Q_N(t)\in\Pi(f(t)^{\otimes N},F_N(t)).
\]
Moreover, for each $\si\in\fS_N$, set
\begin{align*}
\begin{aligned}
& R_\si(U_1,A_1,\ldots,U_N,A_N,V_1,B_1,\ldots,V_N,B_N) \\
&:= (U_{\si^{-1}(1)},A_{\si^{-1}(1)},\ldots,U_{\si^{-1}(N)},A_{\si^{-1}(N)},V_{\si^{-1}(1)},B_{\si^{-1}(1)},\ldots,V_{\si^{-1}(N)},B_{\si^{-1}(N)}).
\end{aligned}
\end{align*}
Then
\[
R_\si\#Q_N^0=Q_N^0\implies R_\si\#Q_N(t)=Q_N(t)\quad\hbox{ for all }t\ge 0\text{ and all }\si\in\fS_N.
\]
\end{Lem}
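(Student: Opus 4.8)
The plan is to establish the two claims of Lemma \ref{PropQN} separately, both relying on the uniqueness statement of Lemma \ref{L-UniqVlasN} (applied to doubled systems) together with push-forward manipulations analogous to those already carried out in the proof of Lemma \ref{L3.1}.

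First I would prove that $Q_N(t)\in\Pi(f(t)^{\otimes N},F_N(t))$. The idea is that $Q_N(t)$ is a measure on the doubled phase space $(\bU(d)\times\fsu(d))^N\times(\bU(d)\times\fsu(d))^N$, and its two marginals — the push-forward under the projection onto the first $N$ factors $(U_j,A_j)$, and the push-forward under the projection onto the last $N$ factors $(V_j,B_j)$ — should be $f(t)^{\otimes N}$ and $F_N(t)$ respectively. To see this, I would project the transport equation \eqref{New-1} onto each group of variables. Projecting onto the $(U_j,A_j)$ variables, the terms involving $\Div_{V_j}$ integrate out (they are divergences in the integrated-out variables), and the first-factor marginal $\pi_U\#Q_N(t)$ is seen to solve the Cauchy problem of Lemma \ref{L-UniqVlasN} with initial data $\pi_U\#Q_N^0=(f^0)^{\otimes N}$; since $f(t)^{\otimes N}$ also solves that problem (as noted just before the statement of Lemma \ref{L-UniqVlasN}), uniqueness gives $\pi_U\#Q_N(t)=f(t)^{\otimes N}$. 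Similarly, projecting onto the $(V_j,B_j)$ variables kills the $\Div_{U_j}$ terms and shows that $\pi_V\#Q_N(t)$ solves the Liouville equation \eqref{Lio} with initial data $(f^0)^{\otimes N}=F_N(0)$; by the uniqueness statement recorded after \eqref{Lio} (the solution is $\tilde{\mathcal S}(t)\#F_N(0)$, unique by Lemmas 8.1.4, 8.1.6, and Proposition 8.1.7 of \cite{AmbrosioGigliSavare}), this forces $\pi_V\#Q_N(t)=F_N(t)$. Hence both marginals are correct, i.e. $Q_N(t)\in\Pi(f(t)^{\otimes N},F_N(t))$.

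Next I would prove the symmetry-propagation statement. Fix $\si\in\fS_N$ and suppose $R_\si\#Q_N^0=Q_N^0$. I would show that $R_\si\#Q_N(t)$ is again a weak solution of the Cauchy problem \eqref{New-1} with the same initial data, and then invoke uniqueness (again Lemmas 8.1.4, 8.1.6, and Proposition 8.1.7 of \cite{AmbrosioGigliSavare}, applied to the doubled system on $(\bU(d)\times\fsu(d))^{2N}$, whose velocity field is continuous in $t$ and Lipschitz in the spatial variables uniformly in $t$ by the same argument as in Lemma \ref{L-UniqVlasN}). The key point is the equivariance of the right-hand side of \eqref{New-1} under $R_\si$: the term $\sum_j\Div_{U_j}(\cdot A_jU_j)$ and the mean-field term $\sum_j\Div_{U_j}(\cdot\int K(\hat U,U_j)U_j\rho_f)$ are manifestly permutation-equivariant (each is a sum over $j$ of a term depending only on $(U_j,A_j)$ and the fixed measure $\rho_f(t)$), and the genuinely $N$-body term $\frac{\kappa}{2}\sum_j\Div_{V_j}(Q_N\tfrac1N\sum_k K(V_k,V_j)V_j)$ is equivariant because $\tfrac1N\sum_k K(V_k,V_j)$ is a symmetric function of $(V_1,\ldots,V_N)$ — this is exactly the computation $\sum_k U_{\si^{-1}(k)}=\sum_k U_k$ used in the proof of Lemma \ref{L3.1}. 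Concretely, one checks that if $Q_N$ solves \eqref{New-1} then the relabeled measure $R_\si\#Q_N$ solves the equation obtained by replacing $(A_j,B_j)\mapsto(A_{\si^{-1}(j)},B_{\si^{-1}(j)})$ in the fixed coefficients; but because $R_\si\#Q_N^0=Q_N^0$ forces the initial data to be invariant, and the $B_j$-dependence enters only through the permutation-invariant combination, the equation for $R_\si\#Q_N$ coincides with \eqref{New-1}. Uniqueness then yields $R_\si\#Q_N(t)=Q_N(t)$ for all $t\ge 0$.

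The main obstacle I anticipate is purely bookkeeping rather than conceptual: one must verify carefully that projecting a weak solution of \eqref{New-1} onto a subset of variables yields a weak solution of the projected equation — that is, that testing against functions depending only on $(U_1,A_1,\ldots,U_N,A_N)$ correctly annihilates the $\Div_{V_j}$ terms — which requires writing out the weak formulation of \eqref{New-1} with test functions on the doubled space and choosing them independent of the complementary block of variables, exactly as in the derivation of the first BBGKY equation above. Likewise, tracking the relabeling of the parameters $(A_j,B_j)$ through $R_\si$ and matching it against the invariance of $Q_N^0$ needs to be done with the same care as in Lemma \ref{L3.1}; once the equivariance of each term of \eqref{New-1} is laid out, the uniqueness argument closes the proof with no further analysis.
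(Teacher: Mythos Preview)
Your proposal is correct and follows essentially the same route as the paper: project \eqref{New-1} onto each block of variables to identify the two marginals via the uniqueness in Lemma \ref{L-UniqVlasN} and the Liouville equation, then check that $R_\si\#Q_N$ solves \eqref{New-1} with the same initial data and invoke uniqueness (Proposition 8.1.7 in \cite{AmbrosioGigliSavare}) on the doubled space. One small wording issue: the $A_j,B_j$ are phase-space variables, not ``fixed coefficients'', so under $R_\si$ they are permuted along with the $U_j,V_j$; the paper handles this simply by writing out the weak formulation for $\psi\circ R_\si$ and substituting $l=\si(j)$ in each sum, which is exactly the computation you describe.
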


\begin{proof}
Call $P_1$ and $P_2$ the projections
\[
\begin{aligned}
P_1:\,(U_1,A_1,\ldots,U_N,A_N,V_1,B_1,\ldots,V_N,B_N)\mapsto(U_1,A_1,\ldots,U_N,A_N),
\\
P_2:\,(U_1,A_1,\ldots,U_N,A_N,V_1,B_1,\ldots,V_N,B_N)\mapsto(V_1,B_1,\ldots,V_N,B_N).
\end{aligned}
\]

Let $\phi\in C^1_c((\bU(d)\times\fsu(d))^N)$. Then
\begin{align*}
\begin{aligned}
& \frac{d}{dt}\int\phi(U_1,\ldots,A_N)Q_N(t,dU_1\ldots dB_N)
\\
& \hspace{0.5cm} =\int\sum_{j=1}^N(d_{U_j}\phi(U_1,\ldots,A_N),A_jU_j)Q_N(t,dU_1\ldots dB_N)
\\
& \hspace{0.7cm}+\frac{\kappa}{2}\int\sum_{j=1}^N\left(d_{U_j}\phi(U_1,\ldots,A_N),\int_{\bU(d)}K(\hat U,U_j)\rho_f(t,d\hat U)U_j\right)Q_N(t,dU_1\ldots dB_N),
\end{aligned}
\end{align*}
which is the weak formulation of 
\[
\begin{aligned}
\d_tP_1\#Q_N+\sum_{j=1}^N\Div_{U_j}\left(P_1\#Q_N\left(A_j+\frac{\kappa}{2}\int_{\bU(d)}K(\hat U,U_j)\rho_f(t,d\hat U)\right)U_j\right)=0.
\end{aligned}
\]
Since $P_1\#Q_N(0)=f(0)^{\otimes N}$, we conclude from the uniqueness part in Lemma \ref{L-UniqVlasN} that
\[
P_1\#Q_N(t)=f(t)^{\otimes N}\quad\hbox{ for all }t\ge 0.
\]

Likewise
\begin{align*}
\begin{aligned}
& \frac{d}{dt}\int\phi(V_1,\ldots,B_N)Q_N(t,dU_1\ldots dB_N)
\\
& \hspace{0.5cm} =\int\sum_{j=1}^N(d_{V_j}\phi(U_1,\ldots,A_N),B_jV_j)Q_N(t,dU_1\ldots dB_N)
\\
& \hspace{0.7cm} +\frac{\kappa}{2}\int\sum_{j=1}^N\left(d_{V_j}\phi(U_1,\ldots,A_N),\frac1N\sum_{k=1}^NK(V_k,V_j)V_j\right)Q_N(t,dU_1\ldots dB_N),
\end{aligned}
\end{align*}
which is the weak formulation of
\[
\begin{aligned}
\d_tP_2\#Q_N+\sum_{j=1}^N\Div_{V_j}\left(P_2\#Q_N\left(B_j+\frac{\kappa}{2N}\sum_{k=1}^NK(V_k,V_j)\right)V_j\right)=0.
\end{aligned}
\]
Since $P_2\#Q_N(0)=f(0)^{\otimes N}$, we conclude from the uniqueness of the solution to the Cauchy problem for the Liouville equation \eqref{Lio} (which follows from Proposition 8.1.7 in \cite{AmbrosioGigliSavare}) that
\[
P_2\#Q_N(t)=F_N(t)\quad\hbox{ for all }t\ge 0.
\]
Thus $Q_N(t)\in\Pi(f(t)^{\otimes N},F_N(t))$ for each $t\ge 0$.

Next, observe that, for each $\psi\in C^1_c((\bU(d)\times\fsu(d))^N\times(\bU(d)\times\fsu(d))^N)$ and each $\si\in\fS_N$, one has
\begin{align*}
\begin{aligned}
& \frac{d}{dt}\int\psi\circ R_\si(U_1,\ldots,B_N)Q_N(t,dU_1\ldots dB_N)
\\
& \hspace{0.5cm} =\int\sum_{j=1}^N(d_{U_j}\psi\circ R_\si(U_1,\ldots,B_N),A_jU_j)Q_N(t,dU_1\ldots dB_N)
\\
& \hspace{0.7cm} +\int\sum_{j=1}^N(d_{V_j}\psi\circ R_\si(U_1,\ldots,B_N),B_jV_j)Q_N(t,dU_1\ldots dB_N)
\\
& \hspace{0.5cm}+\frac{\kappa}{2}\int\sum_{j=1}^N\left(d_{U_j}\psi\circ R_\si(U_1,\ldots,B_N),K(\hat U,U_j)U_j\right)\rho_f(t,d\hat U)Q_N(t,dU_1\ldots dB_N)
\\
& \hspace{0.5cm}+\frac{\kappa}{2}\int\sum_{j=1}^N\left(d_{V_j}\psi\circ R_\si(U_1,\ldots,B_N),\frac1N\sum_{k=1}^NK(V_k,V_j)V_j\right)Q_N(t,dU_1\ldots dB_N),
\end{aligned}
\end{align*}
so that
\[
\begin{aligned}
\d_t(R_\si\#Q_N)&+\sum_{j=1}^N\Div_{U_{\si(j)}}((R_\si\#Q_N)A_{\si(j)}U_{\si(j)})
\\
&+\sum_{j=1}^N\Div_{V_{\si(j)}}((R_\si\#Q_N)B_{\si(j)}V_{\si(j)})
\\
&+\frac{\kappa}{2}\sum_{j=1}^N\Div_{U_{\si(j)}}\left((R_\si\#Q_N)\int_{\bU(d)} K(\hat U,U_{\si(j)})U_{\si(j)}\rho_f(t,d\hat U)\right) 
\\
&+\frac{\kappa}{2}\sum_{j=1}^N\Div_{V_{\si(j)}}\left((R_\si\#Q_N)\frac1N\sum_{k=1}^N K(V_k,V_{\si(j)})V_{\si(j)}\right)=0.
\end{aligned}
\]
With the substitution $l=\si(j)$ in the four summations above, one concludes that $R_\si\#Q_N$ and $Q_N$ are both solutions of the same Cauchy problem \eqref{New-1} provided that $R_\si\#Q_N^0=Q_N^0$. Since the map
$K:\,\bU(d)\times\bU(d)\to M_d(\bbc)$ is Lipschitz continuous, the Cauchy problem \eqref{New-1} has a unique solution (according to Proposition 8.1.7 in \cite{AmbrosioGigliSavare}), and therefore $R_\si\#Q_N(t)=Q_N(t)$ for 
all $t\ge 0$.
\end{proof}

\medskip
Henceforth, we assume that
\[
R_\si\#Q_N^0=Q_N^0\quad\hbox{ for all }\si\in\fS_N.
\]
Following the strategy outlined in Section 3 of \cite{F-M-P}Ê and in Section 5 of \cite{F-P}, we seek to control the quantity
\[
J_N(t):=\frac1N\sum_{j=1}^N \int_{(\bU(d)\times\fsu(d))^{2N}} \left(\|U_j\!-\!V_j\|_2^2\!+\!\|A_j\!-\!B_j\|_2^2 \right)Q_N(t,dU_1\ldots,dB_N).
\]

\begin{Lem}\label{L-JNMKd}
For each $t\ge 0$, one has
\[
J_N(t)=\int_{(\bU(d)\times\fsu(d))^{2N}} \left(\|U_j\!-\!V_j\|_2^2\!+\!\|A_j\!-\!B_j\|_2^2 \right)Q_N(t,dU_1\ldots,dB_N)
\]
for all $j=1,\ldots,N$. In particular
\[
\begin{aligned}
J_N(t)&=\int_{(\bU(d)\times\fsu(d))^{2N}} \left(\|U_1\!-\!V_1\|_2^2\!+\!\|A_1\!-\!B_1\|_2^2 \right)Q_N(t,dU_1\ldots,dB_N)
\\
&\ge\MKd(f(t),F_{N:1}(t))^2\,.
\end{aligned}
\]
\end{Lem}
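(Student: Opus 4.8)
The plan is to exploit the permutation symmetry established in Lemma \ref{PropQN}, namely that $R_\si\#Q_N(t)=Q_N(t)$ for all $\si\in\fS_N$ and all $t\ge 0$, which holds under the standing assumption $R_\si\#Q_N^0=Q_N^0$. First I would fix $j\in\{1,\ldots,N\}$ and choose a permutation $\si\in\fS_N$ with $\si^{-1}(1)=j$ (for instance the transposition exchanging $1$ and $j$, or the identity if $j=1$). Writing out what $R_\si$ does to the coordinates, the first $U$-slot of $R_\si(U_1,A_1,\ldots,V_N,B_N)$ is $U_{\si^{-1}(1)}=U_j$, and similarly the first $A$-slot is $A_j$, the first $V$-slot is $V_j$, and the first $B$-slot is $B_j$. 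Hence the function $(U_1,\ldots,B_N)\mapsto\|U_j-V_j\|_2^2+\|A_j-B_j\|_2^2$ equals $\big(\|U_1-V_1\|_2^2+\|A_1-B_1\|_2^2\big)\circ R_\si$.

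Next I would integrate this identity against $Q_N(t)$ and use the change-of-variables formula for push-forwards: for any bounded Borel test function $\Psi$,
\[
\int\Psi\circ R_\si\,dQ_N(t)=\int\Psi\,d(R_\si\#Q_N(t))=\int\Psi\,dQ_N(t),
\]
where the last equality is precisely the permutation invariance from Lemma \ref{PropQN}. Applying this with $\Psi(U_1,\ldots,B_N)=\|U_1-V_1\|_2^2+\|A_1-B_1\|_2^2$ yields
\[
\int\big(\|U_j-V_j\|_2^2+\|A_j-B_j\|_2^2\big)\,dQ_N(t)=\int\big(\|U_1-V_1\|_2^2+\|A_1-B_1\|_2^2\big)\,dQ_N(t)
\]
for every $j$. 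Therefore each summand in the definition of $J_N(t)$ equals the $j=1$ term, and since there are $N$ of them, the prefactor $\tfrac1N$ cancels, giving the first displayed identity of the lemma and, in particular, the equality with the $j=1$ integral.

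For the final inequality I would invoke Lemma \ref{PropQN} once more: $Q_N(t)\in\Pi(f(t)^{\otimes N},F_N(t))$, so the push-forward of $Q_N(t)$ under the map $(U_1,\ldots,B_N)\mapsto(U_1,A_1,V_1,B_1)$ is a coupling of the first marginals $(f(t)^{\otimes N})_{:1}=f(t)$ and $F_{N:1}(t)$ on $E\times E$ with $E=\bU(d)\times\fsu(d)$ equipped with the norm $\|(U,A)\|^2=\|U\|_2^2+\|A\|_2^2$. Since $\MKd(f(t),F_{N:1}(t))^2$ is the infimum of $\iint\|x-y\|^2$ over all such couplings, and the integral defining the $j=1$ term of $J_N(t)$ is exactly $\iint\|x-y\|^2$ against this particular coupling, the stated bound $J_N(t)\ge\MKd(f(t),F_{N:1}(t))^2$ follows immediately. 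I do not anticipate any real obstacle here; the only point requiring a line of care is checking that the coordinate bookkeeping for $R_\si$ matches the claimed substitution, and that the marginalization is compatible with the product structure of $E^N$, both of which are routine.
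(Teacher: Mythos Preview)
Your proposal is correct and follows essentially the same approach as the paper: the first identity comes from the permutation invariance $R_\si\#Q_N(t)=Q_N(t)$ in Lemma~\ref{PropQN} (the paper uses the transposition exchanging $j$ and $k$ rather than $1$ and $j$, but this is the same idea), and the inequality comes from pushing $Q_N(t)$ forward by the projection onto $(U_1,A_1,V_1,B_1)$ to obtain an element of $\Pi(f(t),F_{N:1}(t))$.
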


\begin{proof}
Let $\tau\in\fS_N$ be the transposition exchanging $j$ and $k$; by Lemma \ref{PropQN} 
\begin{align*}
\begin{aligned}
& \int_{(\bU(d)\times\fsu(d))^{2N}} \left(\|U_k\!-\!V_k\|_2^2\!+\!\|A_k\!-\!B_k\|_2^2 \right)Q_N(t,dU_1\ldots dB_N)
\\
& \hspace{0.5cm} =\int_{(\bU(d)\times\fsu(d))^{2N}} \left(\|U_j\!-\!V_j\|_2^2\!+\!\|A_j\!-\!B_j\|_2^2 \right)R_\tau\#Q_N(t,dU_1\ldots dB_N)
\\
& \hspace{0.5cm} =\int_{(\bU(d)\times\fsu(d))^{2N}} \left(\|U_j\!-\!V_j\|_2^2\!+\!\|A_j\!-\!B_j\|_2^2 \right)Q_N(t,dU_1\ldots dB_N).
\end{aligned}
\end{align*}
This proves the first identity in the lemma. 

As for the second identity, set
\[
p_1:\,(U_1,A_1,\ldots,U_N,A_N,V_1,B_1,\ldots,V_N,B_N)\mapsto(U_1,A_1,V_1,B_1),
\]
and observe that 
\[
p_1\#Q_N(t)\in\Pi(p_1\#f(t)^{\otimes N},p_1\#F_N(t))=\Pi(f(t),F_{N:1}(t)).
\]
Since
\[
\begin{aligned}
J_N(t)&=\int_{(\bU(d)\times\fsu(d))^{2N}}\left(\|U_1\!-\!V_1\|_2^2\!+\!\|A_1\!-\!B_1\|_2^2 \right)Q_N(t,dU_1\ldots dB_N)
\\
&=\int_{(\bU(d)\times\fsu(d))^2}\left(\|U_1\!-\!V_1\|_2^2\!+\!\|A_1\!-\!B_1\|_2^2 \right)p_1\#Q_N(t,dU_1dA_1dV_1B_1),
\end{aligned}
\]
the inequality $J_N(t)\ge\MKd(f(t),F_{N:1}(t))^2$ follows from the very definition of the distance $\MKd$.
\end{proof}

Straightforward computations\footnote{In these computations, and in similar computations appearing later in this paper, we systematically omit the domain of integration and the variables in $Q_N$ when there is no risk of ambiguity in order 
to avoid cluttered mathematical expressions.} show that
\begin{equation} \label{C-7}
\begin{aligned}
\frac{dJ_N}{dt} &= \int \left((d_{U_1}\|U_1-V_1\|_2^2,A_1U_1)+(d_{V_1}\|U_1-V_1\|_2^2,B_1V_1)\right)Q_N 
\\
&+ \int \left(d_{U_1}\|U_1-V_1\|_2^2,\frac{\kappa}{2}\int_{\bU(D)} K(\hat U,U_1)U_1\rho_f(t,d\hat U)\right)Q_N 
\\
&+ \int \left(d_{V_1}\|U_1-V_1\|_2^2,\frac{\kappa}{2N}\sum_{k=1}^N K(V_k,V_1)V_1\right)Q_N
\\
&= \int \left((d_{U_1}\|U_1-V_1\|_2^2,A_1U_1)+(d_{V_1}\|U_1-V_1\|_2^2,B_1V_1)\right)Q_N 
\\
&+ \frac{\kappa}{2}\int \left(d_{U_1}\|U_1-V_1\|_2^2,\int K(\hat U,U_1)U_1\rho_f(t,d\hat U)-\frac1N\sum_{k=1}^N K(U_k,U_1)U_1\right)Q_N 
\\
&+ \frac{\kappa}{2N}\int \left(d_{U_1}\|U_1-V_1\|_2^2,\sum_{k=1}^N K(U_k,U_1)U_1\right)Q_N 
\\
&+ \frac{\kappa}{2N}\int  \left(d_{V_1}\|U_1-V_1\|_2^2,\sum_{k=1}^N K(V_k,V_1)V_1\right)Q_N 
\\
&=: {\mathcal I}_{11}+ {\mathcal I}_{12} + {\mathcal I}_{13} + {\mathcal I}_{14}.
\end{aligned}
\end{equation}

In next lemma, we estimate the terms ${\mathcal I}_{1i}$.  \newline

\begin{Lem} \label{L3.3} 
Let $f$ be a classical solution to \eqref{V-M}. Then the terms  ${\mathcal I}_{1i}$ satisfy
\[ {\mathcal I}_{11} \le J_N, \qquad {\mathcal I}_{13} + {\mathcal I}_{14} \leq 8J_N, \qquad  {\mathcal I}_{12} \leq J_N +  \frac{16d}{N}.  \]
\end{Lem}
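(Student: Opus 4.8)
The plan is to estimate each of the four terms $\mathcal{I}_{1i}$ appearing in \eqref{C-7} separately, using the identity \eqref{R3.1} to rewrite each integrand $(d_{U_1}\|U_1-V_1\|_2^2,\,\cdot\,)$ and $(d_{V_1}\|U_1-V_1\|_2^2,\,\cdot\,)$ as a trace, the unitarity of the matrices involved (so that $\|U\|=\|V\|=1$ and $\|U\|_2^2=\|V\|_2^2=d$), the inequalities \eqref{IneqNorms}, the Cauchy--Schwarz inequality \eqref{CSFrob}, and Lemma~\ref{L-JNMKd} to recognize the resulting averages as $J_N$. The symmetry property $R_\si\#Q_N=Q_N$ (Lemma~\ref{PropQN}) will be used to replace $j$-indexed integrals by $1$-indexed ones whenever a sum $\frac1N\sum_{k=1}^N$ appears.

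\textbf{The term $\mathcal{I}_{11}$.} Applying \eqref{R3.1} with $A(U)=A_1$, $V=V_1$ gives
$(d_{U_1}\|U_1-V_1\|_2^2,A_1U_1)=\Tr(V_1^*A_1U_1-U_1^*A_1V_1)$, and similarly for the $V_1$ term with $A(V)=B_1$. Since $A_1,B_1\in\fsu(d)$ are skew-adjoint, I expect these two contributions to combine into something like $2\,\mathrm{Re}\,\Tr((A_1-B_1)(U_1^*V_1-V_1^*U_1))/(\cdots)$ after using $\Tr X=\overline{\Tr X^*}$; estimating by Cauchy--Schwarz \eqref{CSFrob} and $\|U_1^*V_1-V_1^*U_1\|_2\le\|U_1^*(V_1-U_1)\|_2+\|(U_1-V_1)^*U_1\|_2\le 2\|U_1-V_1\|_2$ together with the elementary bound $2ab\le a^2+b^2$ should produce $\mathcal{I}_{11}\le\int(\|A_1-B_1\|_2^2+\|U_1-V_1\|_2^2)Q_N\le J_N$ — the point being that the $U$-parts actually cancel in the trace and only the mixed $A$--$U$ terms survive, giving exactly $J_N$.

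\textbf{The terms $\mathcal{I}_{13}+\mathcal{I}_{14}$ and $\mathcal{I}_{12}$.} For $\mathcal{I}_{13}$, write $\frac1N\sum_k K(U_k,U_1)U_1=\frac1N\sum_k(U_kU_1^*-U_1U_k^*)U_1$; applying \eqref{R3.1} termwise and expanding $U_k=V_k+(U_k-V_k)$, $U_1=V_1+(U_1-V_1)$, the ``diagonal'' part where everything is a $V$ cancels against the corresponding part of $\mathcal{I}_{14}$ (this is the structural reason to group $\mathcal{I}_{13}+\mathcal{I}_{14}$), leaving only terms each carrying at least one factor $U_k-V_k$ or $U_1-V_1$; bounding all unitary factors by $1$ in operator norm via \eqref{IneqNorms}, using Cauchy--Schwarz, and then $\frac1N\sum_k\|U_k-V_k\|_2^2=J_N$ after a symmetrization via Lemma~\ref{PropQN} (and $2ab\le a^2+b^2$ to handle cross terms), one collects a constant times $J_N$; careful bookkeeping of the number of terms gives the stated $8J_N$. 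For $\mathcal{I}_{12}$, split off again the part that matches $\frac1N\sum_k K(U_k,U_1)U_1$ versus $\int K(\hat U,U_1)U_1\rho_f(t,d\hat U)$: the difference is an empirical-average-minus-true-average of the bounded function $\hat U\mapsto K(\hat U,U_1)$ (a law-of-large-numbers fluctuation), which under $Q_N$, since $P_2\#Q_N=F_N$ is exchangeable, has variance $O(1/N)$; more precisely one writes the square as a double sum over $k,l$, uses that the $k\ne l$ cross terms vanish in expectation because $P_1\#Q_N=f^{\otimes N}$ makes the $U_j$ under $f$ independent with mean $\la V\ra$... actually here one needs that $\int K(\hat U,U_1)\rho_f(t,d\hat U)=\int_{\bU(d)^N} K(U_k,U_1)(f^{\otimes N})$ for $k\ne 1$, so the $O(1/N)$ bound comes out with constant $16d$ after using $\|K(U,V)\|_2^2\le(2\cdot 2\sqrt d)^2/4$-type estimates; and the remaining ``mean'' part contributes another piece bounded by $J_N$ via the same Cauchy--Schwarz-and-symmetrize argument. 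Adding up yields $\mathcal{I}_{12}\le J_N+16d/N$.

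\textbf{Main obstacle.} The routine part is the trace manipulations and norm estimates; the genuinely delicate point is $\mathcal{I}_{12}$, where one must correctly exploit that $P_1\#Q_N(t)=f(t)^{\otimes N}$ and $P_2\#Q_N(t)=F_N(t)$ (Lemma~\ref{PropQN}) together with exchangeability to see the fluctuation term $\int K(\hat U,U_1)\rho_f\,d\hat U-\frac1N\sum_k K(U_k,U_1)$ as a genuine $O(N^{-1/2})$ quantity in $L^2(Q_N)$ — getting the explicit constant $16d$ requires tracking that $\sup_{U,V\in\bU(d)}\|K(U,V)\|_2^2\le 16d$ (since $\|UV^*-VU^*\|_2\le 2\|UV^*\|_2=2\sqrt d$, whence $\le 4d$; the stated $16d$ absorbs the extra factors from pairing with $d_{U_1}\|U_1-V_1\|_2^2$ whose Frobenius norm is $\le 2\|U_1-V_1\|_2\le 4$ times $\tfrac{\kappa}{2}$-free bookkeeping) — and making sure the ``diagonal'' cancellations between $\mathcal{I}_{13}$ and $\mathcal{I}_{14}$ are accounted for so that no $O(1)$ term survives that is not proportional to $J_N$.
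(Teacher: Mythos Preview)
Your proposal is correct and follows essentially the same route as the paper: combine the $d_{U_1}$ and $d_{V_1}$ pairings so that only the difference $X-Y$ survives (the paper packages this as the identity \eqref{DiffNorm2}, which is slightly cleaner than your direct expansion), bound $\|K(U_2,U_1)-K(V_2,V_1)\|_2$ by a Lipschitz estimate, and for $\mathcal{I}_{12}$ use $P_1\#Q_N=f^{\otimes N}$ so that the $U_k$ are i.i.d.\ and the centered variables $\mathcal{V}(U_k)=\int(K(\hat U,U_1)-K(U_k,U_1))\rho_f\,d\hat U$ have vanishing cross terms, yielding the $16d/N$ variance bound. One clarification: the $J_N$ contribution in $\mathcal{I}_{12}\le J_N+16d/N$ does not come from any ``mean part'' but simply from the $\|U_1-V_1\|_2^2$ half of the $2ab\le a^2+b^2$ split after Cauchy--Schwarz, and the constant $16d$ arises because $\|\mathcal{V}(U_k)\|_2\le 4\sqrt d$ (a difference of two $K$'s, each of Frobenius norm $\le 2\sqrt d$).
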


\begin{proof}  First, we recall Remark \ref{R3.1} (2), to prove that, if $X,Y\in\fsu(d)$, one has
\begin{align}\label{DiffNorm2}
\begin{aligned}
& (d_U\|U-V\|_2^2,XU)_U+(d_V\|U-V\|_2^2,YV)_V \\
& \hspace{0.5cm} =\Tr(U^*XV-V^*XU)+\Tr(V^*YU-U^*YV)
\\
& \hspace{0.5cm}=\Tr(U^*(X-Y)V-V^*(X-Y)U)
\\
& \hspace{0.5cm} =\Tr(U^*(X-Y)(V-U)+(U-V)^*(X-Y)U).
\end{aligned}
\end{align}

With this identity, we shall estimate separately ${\mathcal I}_{11}$ and ${\mathcal I}_{13} + {\mathcal I}_{14}$. \newline

\noindent 
$\bullet$ Case A (${\mathcal I}_{11}$). Applying \eqref{DiffNorm2} with $X=A_1$ and $Y=B_1$ shows that
\begin{align*}
\begin{aligned}
& (d_{U_1}\|U_1-V_1\|_2^2,A_1U_1)+(d_{V_1}\|U_1-V_1\|_2^2,B_1V_1)
\\
& \hspace{0.5cm} =
\Tr(U_1^*(A_1-B_1)(V_1-U_1)+(U_1-V_1)^*(A_1-B_1)U_1),
\end{aligned}
\end{align*}
so that, by the Cauchy-Schwarz inequality \eqref{CSFrob} and the second inequality in \eqref{IneqNorms}
\begin{align*}
\begin{aligned}
& |(d_{U_1}\|U_1-V_1\|_2^2,A_1U_1)_U|+|(d_{V_1}\|U_1-V_1\|_2^2,B_1V_1)_V|
\\
& \hspace{0.5cm} \le
(\|U_1^*(A_1-B_1)\|_2+\|(A_1-B_1)U_1\|_2)\|V_1-U_1\|_2
\\
& \hspace{0.5cm} \le 2\|A_1-B_1\|_2\|V_1-U_1\|_2\le\|A_1-B_1\|^2_2+\|V_1-U_1\|^2_2.
\end{aligned}
\end{align*}
Hence, by Lemma \ref{L-JNMKd}
\[  
{\mathcal I}_{11}\le J_N. 
\]

\smallskip
\noindent 
$\bullet$ Case B (${\mathcal I}_{13} + {\mathcal I}_{14}$).
Since $R_\si\#Q_N(t)=Q_N(t)$ by Lemma \ref{PropQN}
\[
\begin{aligned}
{\mathcal I}_{13}=\frac{N-1}{N}\int\left(d_{U_1}\|U_1-V_1\|^2,K(U_2,U_1)U_1\right)Q_N, 
\\
{\mathcal I}_{14}=\frac{N-1}{N}\int\left(d_{V_1}\|U_1-V_1\|^2,K(V_2,V_1)V_1\right)Q_N.
\end{aligned}
\]
Applying \eqref{DiffNorm2} with $X=K(U_2,U_1)$ and $Y=K(V_2,V_1)$ shows that
\begin{align*}
\begin{aligned}
& \left(d_{U_1}\|U_1-V_1\|_2^2,K(U_2,U_1)U_1\right)+\left(d_{V_1}\|U_1-V_1\|_2^2,K(V_2,V_1)V_1\right)
\\
& \hspace{0.5cm} =\Tr(U_1^*(K(U_2,U_1)-K(V_2,V_1))(V_1-U_1))
\\
& \hspace{0.5cm}  +\Tr((U_1-V_1)(K(U_2,U_1)-K(V_2,V_1))U_1),
\end{aligned}
\end{align*}
so that
\begin{align}\label{C-9}
\begin{aligned}
& |\left(d_{U_1}\|U_1-V_1\|_2^2,K(U_2,U_1)U_1\right)_U|+|\left(d_{V_1}\|U_1-V_1\|_2^2,K(V_2,V_1)V_1\right)_V|
\\
& \hspace{2cm} \le
2\|V_1-U_1\|_2\|(K(V_2,V_1)-K(U_1,U_2))\|_2,
\end{aligned}
\end{align}
again by the Cauchy-Schwarz inequality \eqref{CSFrob} and the second inequality in \eqref{IneqNorms}.

On the other hand
\begin{align*}
\begin{aligned}
& K(V_2,V_1)-K(U_1,U_2)=(V_2V_1^*-V_1V_2^*)-(U_1U_2^*-U_2U_1^*) \\
& \hspace{1.5cm}  =(V_2-U_2)V_1^*+U_2(V_1-U_1)^* -(V_1-U_1)V_2^*-U_1(V_2-U_2)^*
\end{aligned}
\end{align*}
so that
\begin{equation} \label{C-10}
\|(K(V_2,V_1)-K(U_1,U_2))\|_2\le 2\|V_2-U_2\|_2+2\|V_1-U_1\|_2.
\end{equation}
Combining \eqref{C-9} with \eqref{C-10} and using the elementary inequality $2ab\le a^2+b^2$ shows that
\[
\begin{aligned}
{\mathcal I}_{13} + {\mathcal I}_{14}\le& 2\int\|V_1-U_1\|_2(2\|V_2-U_2\|_2+2\|V_1-U_1\|_2)Q_N 
\\
\le&4\int\|V_1-U_1\|_2^2Q_N+4\int\|V_2-U_2\|_2\|V_1-U_1\|_2Q_N 
\\
\le&6\int\|V_1-U_1\|_2^2 Q_N+2\int\|V_2-U_2\|_2^2 Q_N\le8J_N.
\end{aligned}
\]

It remains to treat the term ${\mathcal I}_{12}$, which is handled similarly, with some minor differences.

\smallskip
\noindent $\bullet$ Case C (${\mathcal I}_{12}$). The integrand in ${\mathcal I}_{12}$ is put in the form
\begin{align*}
\begin{aligned}
& \left(d_{U_1}\|U_1-V_1\|_2^2,\left(\int_{\bU(d)}K(\hat U,U_1)\rho_f(t,d\hat U)-\frac1N\sum_{k=1}^NK(U_k,U_1)\right)U_1\right)
\\
& \hspace{0.5cm} =\Tr\left((U_1-V_1)^*\left(\int_{\bU(d)}K(\hat U,U_1)\rho_f(t,d\hat U)-\frac1N\sum_{k=1}^NK(U_k,U_1)\right)U_1\right)
\\
& \hspace{0.7cm} +\Tr\left(U_1^*\left(\int_{\bU(d)}K(\hat U,U_1)\rho_f(t,d\hat U)-\frac1N\sum_{k=1}^NK(U_k,U_1)\right)^*(U_1-V_1)\right)
\end{aligned}
\end{align*}
by \eqref{DiffNorm2} with $Y=0$ and 
\[
X=\int_{\bU(d)}K(\hat U,U_1)\rho_f(t,d\hat U)-\frac1N\sum_{k=1}^NK(U_k,U_1)\,.
\]
Hence
\[
\begin{aligned}
\left|\left(d_{U_1}\|U_1-V_1\|_2^2,\left(\int_{\bU(d)}K(\hat U,U_1)\rho_f(t,d\hat U)-\frac1N\sum_{k=1}^NK(U_k,U_1)\right)U_1\right)\right|
\\
\le\|U_1-V_1\|_2^2+\left\|\int_{\bU(d)}K(\hat U,U_1)\rho_f(t,d\hat U)-\frac1N\sum_{k=1}^NK(U_k,U_1)\right\|_2^2
\end{aligned}
\]
by the Cauchy-Schwarz inequality \eqref{CSFrob} and the basic inequality $2ab\le a^2+b^2$. Hence
\[
\begin{aligned} 
{\mathcal I}_{12}&\le J_N+\int\left\|\int_{\bU(d)}K(\hat U,U_1)\rho_f(t,d\hat U)-\frac1N\sum_{k=1}^NK(U_k,U_1)\right\|_2^2Q_N
\\
&=:J_N+\mathcal{J}_{12}.
\end{aligned}
\]

We conclude with the following observation, whose proof is deferred until the end of the present section.

\begin{Lem}\label{L-CLT}
One has
\[
\mathcal{J}_{12}=\int\left\|\frac1N\sum_{k=1}^N\int_{\bU(d)}(K(\hat U,U_1)-K(U_k,U_1))\rho_f(t,d\hat U)\right\|_2^2\prod_{j=1}^N\rho_f(t,dU_j)\le\frac{16d}{N}.
\]
\end{Lem}

\end{proof}

\medskip
Our main result in this section is

\begin{Thm} \label{T3.1}  
Let $f^0\in\cP_2(\bU(d)\times\fsu(d))$ and let $f$ be the solution to \eqref{V-M} with initial data $f^0$. Let $F_N$ be the solution of the Liouville equation \eqref{Lio} with initial data $F_N^0=(f^0)^{\otimes N}$.

Then, the first marginal of $F_{N:1}$ of $F_N$ satisfies the inequality
\[  
\Dist_{MK,2}(f(t),F_{N:1}(t))^2\le\frac{8d}{5N}(e^{10t}-1), \qquad\text{ for all }t\ge 0.
\]
\end{Thm}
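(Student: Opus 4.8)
The plan is to run a Gronwall argument on the coupling functional $J_N(t)$ introduced above. First I would recall that $Q_N^0:=(f^0)^{\otimes N}$, taken as a coupling of $(f^0)^{\otimes N}$ with itself on the diagonal, satisfies $R_\si\#Q_N^0=Q_N^0$ trivially, and that $J_N(0)=0$ since $Q_N^0$ is supported on the diagonal $\{U_j=V_j,\,A_j=B_j\}$. By Lemma \ref{PropQN}, the associated solution $Q_N(t)$ of \eqref{New-1} stays in $\Pi(f(t)^{\otimes N},F_N(t))$ and is $R_\si$-invariant, so Lemma \ref{L-JNMKd} applies and gives $\MKd(f(t),F_{N:1}(t))^2\le J_N(t)$ for all $t\ge 0$. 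Thus it suffices to bound $J_N(t)$.

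Next I would differentiate $J_N$ in time: by \eqref{C-7} one has $\tfrac{dJ_N}{dt}=\mathcal I_{11}+\mathcal I_{12}+\mathcal I_{13}+\mathcal I_{14}$, and by Lemma \ref{L3.3} each term is controlled:
\[
\frac{dJ_N}{dt}\le\mathcal I_{11}+\mathcal I_{12}+\mathcal I_{13}+\mathcal I_{14}\le J_N+\Big(J_N+\frac{16d}{N}\Big)+8J_N=10J_N+\frac{16d}{N}.
\]
(One should note here that Lemma \ref{L3.3} was stated for $f$ a classical solution to \eqref{V-M}; I would either invoke a regularization/density argument from the existence theory in Proposition \ref{P-LoheKinEq}, or observe that all the estimates in Lemma \ref{L3.3} only use Lipschitz bounds on $K$ and therefore pass to general weak solutions by approximation — this is the one point where a little care is needed.)

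Then I would integrate the differential inequality. With $J_N(0)=0$, Gronwall's lemma gives
\[
J_N(t)\le\frac{16d}{N}\int_0^t e^{10(t-s)}\,ds=\frac{16d}{10N}\big(e^{10t}-1\big)=\frac{8d}{5N}\big(e^{10t}-1\big).
\]
Combining this with $\MKd(f(t),F_{N:1}(t))^2\le J_N(t)$ from Lemma \ref{L-JNMKd} yields exactly the claimed bound
\[
\Dist_{MK,2}(f(t),F_{N:1}(t))^2\le\frac{8d}{5N}\big(e^{10t}-1\big),\qquad t\ge 0.
\]

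\textbf{Main obstacle.} The genuinely substantive input is Lemma \ref{L-CLT} (the bound $\mathcal J_{12}\le 16d/N$), which is the law-of-large-numbers/central-limit estimate producing the crucial $1/N$ factor; everything else is bookkeeping with the Frobenius norm inequalities \eqref{IneqNorms}, \eqref{CSFrob} and the permutation-invariance from Lemma \ref{PropQN}. Since Lemma \ref{L-CLT} and Lemma \ref{L3.3} are already available, the proof of Theorem \ref{T3.1} itself is then just the assembly above; the only place demanding attention is justifying the use of Lemma \ref{L3.3} (stated for classical solutions) for the weak solution $f$ furnished by Proposition \ref{P-LoheKinEq}, together with the fact that $Q_N^0=(f^0)^{\otimes N}$ on the diagonal is an admissible, $R_\si$-invariant initial coupling with $J_N(0)=0$.
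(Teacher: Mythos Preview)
Your proposal is correct and follows essentially the same approach as the paper: choose the diagonal coupling $Q_N^0=\prod_j f^0(dU_jdA_j)\de_{U_j}(dV_j)\de_{A_j}(dB_j)$ so that $J_N(0)=0$, combine \eqref{C-7} with Lemma \ref{L3.3} to get $\dot J_N\le 10J_N+16d/N$, apply Gronwall, and conclude via Lemma \ref{L-JNMKd}. The paper does not explicitly address the ``classical solution'' hypothesis you flag either, so your caveat is a reasonable observation rather than a divergence from the paper's argument.
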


\begin{proof}  
It follows from \eqref{C-7} and Lemma \ref{L3.3} that $J_N$ satisfies
\[ 
\frac{dJ_N}{dt}\le 10J_N+\frac{16d}{N}, 
\]
for all $t\ge 0$. By Gronwall's lemma
\[
J_N(t)\le e^{10t}J_N(0)+\frac{8d}{5N}(e^{10t}-1),
\]
for all $t\ge 0$. Choosing $Q_N^0$ of the form 
\[
Q_N^0=\prod_{j=1}^Nf^0(dU_jdA_j)\de_{U_j}(V_j)\de_{A_j}(B_j)
\]
implies that $J_N(0)=0$. With this special choice of the initial coupling, the conclusion follows from Lemma \ref{L-JNMKd}.
\end{proof}

\medskip
\begin{proof}[Proof of Lemma \ref{L-CLT}]
Observe that
\[
\int_{\bU(d)}K(\hat U,U_1)\rho_f(t,d\hat U)-\frac1N\sum_{k=1}^NK(U_k,U_1)=\frac1N\sum_{k=1}^N\cV(U_k),
\]
with
\[
\cV(U_k):=\int_{\bU(d)}(K(\hat U,U_1)-K(U_k,U_1))\rho_f(t,d\hat U). 
\]
Then
\[  
\mathcal{J}_{12}=\int_{\bU(d)^N}\left\|\frac1N\sum_{k=1}^N\cV(U_k)\right\|_2^2\prod_{m=1}^N\rho_f(t,dU_m),
\]
since the integrand in $S$ depends only on the variables $U_1,\ldots,U_N$, while $Q_N(t)$ is belongs to $\Pi(f(t)^{\otimes N},F_N(t))$. Expanding the square in the integrand of $S$
\begin{align*}
\begin{aligned}
&\left\|\frac1N\sum_{k=1}^N\cV(U_k)\right\|_2^2 \\
& \hspace{0.5cm} =\frac1{N^2}\sum_{k=1}^N\|\cV(U_k)\|_2^2 +\frac{1}{N^2}\sum_{1\le j<k\le N}\Tr(\cV(U_j)^*\cV(U_k)+\cV(U_k)^*\cV(U_j)). 
\end{aligned}
\end{align*}
Now, if $U,V\in\bU(d)$, one has
\[
\|UV^*\|_2=\Tr(VU^*UV^*)^{1/2}=\sqrt{d},
\]
so that
\[
\begin{aligned}  
\|\cV(U_k)\|_2\le&\int_{\bU(d)}\|K(\hat U,U_1)-K(U_k,U_1)\|_2\rho_f(t,d\hat U)
\\
=&\int_{\bU(d)}\|(\hat U-U_k)^*U_1-U_1^*(\hat U-U_k)\|_2\rho_f(t,d\hat U)\le 4\sqrt{d}. 
\end{aligned}\]
This yields
\[
\left\|\frac1N\sum_{k=1}^N\cV(U_k)\right\|^2\le\frac{16d}{N}+\frac{1}{N^2}\sum_{1\le j<k\le N}\Tr(\cV(U_j)^*\cV(U_k)+\cV(U_k)^*\cV(U_j)). 
\]
On the other hand, if $1\le j<k\le N$, we have
\[
\begin{aligned}
\int_{\bU(d)^N}&\Tr(\cV(U_j)^*\cV(U_k))\prod_{m=1}^N\rho_f(t,dU_m)
\\
&= \Tr\left(\int_{\bU(d)^{N-1}}\cV(U_j)^*\prod_{m=1\atop m\not=k}^N\rho_f(t,dU_m)\int_{\bU(d)}\cV(U_k)\rho_f(t,dU_k)\right)=0,
\end{aligned}
\]
since, for each $k=2,\ldots,N$, 
\[
\int_{\bU(d)}\cV(U_k)\rho_f(t,dU_k)=\iint_{\bU(d)^2}(K(\hat U,U_1)-K(U_k,U_1))\rho_f(t,d\hat U)\rho_f(t,dU_k)=0. 
\]
Likewise, for $1\le j<k\le N$
\begin{align*}
\begin{aligned}
& \int_{\bU(d)^N}\Tr(\cV(U_k)^*\cV(U_j))\prod_{m=1}^N\rho_f(t,dU_m)
\\
& \hspace{1cm} =\int_{\bU(d)^N}\overline{\Tr(\cV(U_j)^*\cV(U_k))}\prod_{m=1}^N\rho_f(t,dU_m)=0,
\end{aligned}
\end{align*}
and this concludes the proof.
\end{proof}


\section{Emergent Dynamics: Identical Hamiltonians} \label{sec:4}


In this section, we establish the emergent dynamics of the kinetic Lohe equation \eqref{K-Lohe} in the case of equal Hamiltonians:
\[ 
H_1 = \cdots = H_N =: H=H^*\in M_d(\bbc). 
\]

Without loss of generality, as shown by the lemma below, we may assume that $H = 0$ and that $f$ is of the form
\[
f(t,dUdA) = \rho(t, U)\mu(dU)\delta_0(dA),\qquad\rho(t, U) := \rho_f(t, U),
\]
where $\rho$ is a time-dependent probability density on $\bU(d)$. 

If $f(t)$ is of this form and satisfies \eqref{V-M}, the local mass density $\rho$ satisfies
\begin{equation} \label{KL-I}
\displaystyle \d_t \rho + \frac{\kappa}{2}\Div_U\left(\rho \int_{\bU(d)} K(V,U) U \rho(t,V)\mu(dV)\right)=0, \quad U \in \bU(d),~t > 0,
\end{equation}
where we recall that $K(V, U) = VU^* -U V^*$. This is a scalar conservation law (with nonlocal flux function).

\medskip
If $H=iA_0\not=0$, seek $f$ of the form
\begin{equation}\label{fDeA0}
f(t,dUdA) = \l(t,e^{-tA_0}U)\mu(dU)\delta_{A_0}(dA),\qquad\rho_f(t,U) := \l(t,e^{-tA_0}U).
\end{equation}

\begin{Lem}
If $f(t)$ is a time-dependent probability measure of the form \eqref{fDeA0} which is a solution to \eqref{V-M}, then the time dependent probability density $\l(t,\cdot)$ is a solution to \eqref{KL-I}.
\end{Lem}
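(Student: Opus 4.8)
The plan is to pass to the ``rotating frame'' attached to the one-parameter subgroup $t\mapsto e^{tA_0}\in\bU(d)$. Conjugating the dynamics by this subgroup exactly cancels the drift term $\Div_U(fA_0U)$ — because $A_0$ commutes with $e^{tA_0}$ — while the nonlinear coupling term transforms covariantly, thanks to the conjugation identity $K(gV,gU)=gK(V,U)g^{-1}$ for $g\in\bU(d)$ together with the left-invariance of the Haar measure $\mu$. All of this is carried out in the weak formulation of \eqref{V-M} provided by Definition \ref{D3.1}.

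First I would reduce \eqref{V-M} to a scalar continuity equation on $\bU(d)$. Since $f(t)$ in \eqref{fDeA0} is supported on $\bU(d)\times\{A_0\}$ and \eqref{V-M} contains no $A$-derivative, testing the weak form of \eqref{V-M} against functions $\Phi(U,A)=\psi(U)\chi(A)$ with $\chi\in C^1_c(\fsu(d))$, $\chi\equiv1$ near $A_0$, shows that $\rho_f(t,dU)=\l(t,e^{-tA_0}U)\mu(dU)$ is a weak solution of
\begin{equation*}
\d_t\rho_f+\Div_U(\rho_fA_0U)+\tfrac{\ka}{2}\Div_U\Big(\rho_f\!\int_{\bU(d)}\!K(V,U)U\rho_f(t,dV)\Big)=0,
\end{equation*}
i.e.\ $t\mapsto\int\phi\,\rho_f(t,dU)$ is absolutely continuous with $\tfrac{d}{dt}\int\phi\,\rho_f=\int(d_U\phi,A_0U)\rho_f+\tfrac{\ka}{2}\int\big(d_U\phi,\int K(V,U)U\rho_f(t,dV)\big)\rho_f$ for every $\phi\in C^1(\bU(d))$; the regularity in $t$ is inherited from the representation of $\rho_f(t)$ as a push-forward along the Lipschitz flow of \eqref{V-M}, exactly as in the discussion preceding Proposition \ref{P-LoheKinEq}.

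Next, for each $t$ let $\Theta_t:\bU(d)\to\bU(d)$ be left translation by $e^{tA_0}$, so $\Theta_t^{-1}(U)=e^{-tA_0}U$ and $d\Theta_t(BW)=e^{tA_0}BW$, and set $\tilde\rho(t):=\Theta_t^{-1}\#\rho_f(t)$. By left-invariance of $\mu$ one gets $\tilde\rho(t,dW)=\l(t,W)\mu(dW)$, and since $(\cdot,\cdot)$ in Definition \ref{D3.1} is the canonical tangent/cotangent pairing, it is natural under $\Theta_t$. Fixing $\psi\in C^1(\bU(d))$ and writing $\Psi(t,U):=\psi(e^{-tA_0}U)$, I would differentiate $t\mapsto\int\psi\,\tilde\rho(t,dW)=\int\Psi(t,\cdot)\,\rho_f(t,dU)$ using the weak equation above. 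The two facts to check, each in one line, are: (i) at $W=e^{-tA_0}U$ one has $\partial_t\Psi(t,U)=(d_W\psi,-A_0W)$ and $(d_U\Psi(t,\cdot),A_0U)=(d_W\psi,A_0W)$, because $A_0$ commutes with $e^{tA_0}$, so the drift cancels; (ii) the covariance $K(e^{tA_0}\tilde V,e^{tA_0}W)=e^{tA_0}K(\tilde V,W)e^{-tA_0}$, whence, substituting $V=e^{tA_0}\tilde V$, $U=e^{tA_0}W$ and using left-invariance of $\mu$ once more, $\int K(V,U)U\rho_f(t,dV)=d\Theta_t\big(\int K(\tilde V,W)W\tilde\rho(t,d\tilde V)\big)$. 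Combining (i)--(ii) yields $\tfrac{d}{dt}\int\psi\,\tilde\rho(t,dW)=\tfrac{\ka}{2}\int\big(d_W\psi,\int K(\tilde V,W)W\tilde\rho(t,d\tilde V)\big)\tilde\rho(t,dW)$, which is precisely the weak formulation of \eqref{KL-I} for $\tilde\rho(t,dW)=\l(t,W)\mu(dW)$; hence $\l(t,\cdot)$ solves \eqref{KL-I}.

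I do not expect a genuine obstacle here: the only thing that must really be verified is the mutual compatibility under $\Theta_t$ of the three structures involved — the measure $\mu$ (left-invariance), the pairing in $\Div_U$ (naturality of $d$), and the coupling kernel $K$ (conjugation-covariance) — together with the commutation $[A_0,e^{tA_0}]=0$ that turns $\Div_U(\rho_fA_0U)$ into an exact transport term along the flow of $\Theta_t$. This bookkeeping, rather than any estimate, is the main (mild) point of the proof.
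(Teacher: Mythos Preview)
Your proof is correct and follows essentially the same route as the paper: both pass to the rotating frame $U\mapsto e^{-tA_0}U$, use the conjugation covariance $K(e^{tA_0}\tilde V,e^{tA_0}W)=e^{tA_0}K(\tilde V,W)e^{-tA_0}$ together with the translation-invariance of the Haar measure $\mu$, and observe that the drift $A_0U$ is exactly absorbed by the time-dependence of the test function $\psi(e^{-tA_0}U)$. Your presentation is slightly more streamlined in that you name the map $\Theta_t$ and work with push-forwards, whereas the paper carries out the same substitutions directly inside the integrals, but the mathematical content is identical.
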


This is the only place in the present paper where we use the fact that the measure $\mu$ is translation invariant on $\bU(d)$. Henceforth, we use the following notation: for each $m\in\cP(\bU(d))$ and each $\phi\in C(\bU(d))$, we set
\[
\la\phi m\ra:=\int_{\bU(d)}\phi(V)m(dV).
\]

\begin{proof}
The time-dependent probability measure $f$ satisfies \eqref{K-Lohe} if and only if
\begin{align}\label{WeakK-Lohe}
\begin{aligned}
& \frac{d}{dt}\int_{\bU(d)\times\fsu(d)}\phi(U,A)f(t,dUdA)
\\
& \hspace{0.5cm} =\int_{\bU(d)\times\fsu(d)}(d_U\phi(U,A),(A+\la K(\cdot,U)\rho(t,\cdot)\mu\ra)U)f(t,dUdA).
\end{aligned}
\end{align}
Thus
\[
\la K(\cdot,U)\rho(t,\cdot)\mu\ra:=\int_{\bU(d)}K(\hat U,U)\rho(t,\hat U)\mu(d\hat U) =\int_{\bU(d)}K(\hat U,U)\l(t,e^{-tA_0}\hat U)\mu(d\hat U).\]
In this integral, substitute $\hat V=e^{-tA_0}\hat U$; since $\mu$ is translation invariant on $\bU(d)$ 
\[
\int_{\bU(d)}K(\hat U,U)\l(t,e^{-tA_0}\hat U)\mu(d\hat U)=\int_{\bU(d)}K(e^{tA_0}\hat V,U)\l(t,\hat V)\mu(d\hat V).
\]
On the other hand, $K(\hat U,U)=\hat UU^*-U\hat U^*$ satisfies the identity
\[
K(e^{tA_0}\hat V,e^{tA_0}V)=e^{tA_0}K(\hat V,V^*)e^{-tA_0},
\]
so that
\[
\la K(\cdot,e^{tA_0}V)\rho(t,\cdot)\mu\ra=e^{tA_0}\la K(\cdot,V)\l(t,\cdot)\mu\ra e^{-tA_0}.
\]
Observe that
\begin{align*}
\begin{aligned}
& \int_{\bU(d)\times\fsu(d)}\phi(U,A)f(t,dUdA) \\
& \hspace{0.5cm} =\int_{\bU(d)}\phi(U,A_0)\l(t,e^{-tA_0}U)\mu(dU)
=\int_{\bU(d)}\phi(e^{tA_0}V,A_0)\l(t,V)\mu(dV),
\end{aligned}
\end{align*}
and that
\begin{align*}
\begin{aligned}
& \int_{\bU(d)\times\fsu(d)}(d_U\phi(U,A),(A+\la K(\cdot,U)\rho(t,\cdot)\mu\ra)U)f(t,dUdA)
\\
& \hspace{0.2cm} =\!\!\int_{\bU(d)}(d_U\phi(U,A_0),(A_0+e^{tA_0}\la K(\cdot,e^{-tA_0}U)\l(t,\cdot)\mu\ra e^{-tA_0})U)\l(t,e^{-tA_0}U)\mu(dU)
\\
& \hspace{0.2cm}  =\int_{\bU(d)}(d_U\phi(e^{tA_0}V,A_0),(A_0e^{tA_0}+e^{tA_0}\la K(\cdot,V)\l(t,\cdot)\mu\ra V)\l(t,V)\mu(dV)
\end{aligned}
\end{align*}
by substituting $V=e^{-tA_0}U$ and using the translation invariance of $\mu$. Thus $f$ satisfies \eqref{WeakK-Lohe} if and only if
\begin{align*}
\begin{aligned}
& \frac{d}{dt}\int_{\bU(d)}\phi(e^{tA_0}V,A_0)\l(t,V)\mu(dV) \\
& \hspace{0.2cm}  =\!\!\int_{\bU(d)}\phi(e^{tA_0}V,A_0)\d_t\l(t,V)\mu(dV)\! +\!\!\int_{\bU(d)}(d_U\phi(e^{tA_0}V,A_0),Ae^{tA_0}V)\l(t,V)\mu(dV)
\\
& \hspace{0.2cm} =\!\!\int_{\bU(d)}(d_U\phi(e^{tA_0}V,A_0),(A_0e^{tA_0}+e^{tA_0}\la K(\cdot,V)\l(t,\cdot)\mu\ra V)\l(t,V)\mu(dV).
\end{aligned}
\end{align*}
This equality is recast as
\begin{align*}
\begin{aligned}
& \int_{\bU(d)}\phi(e^{tA_0}V,A_0)\d_t\l(t,V)\mu(dV)
\\
&  \hspace{0.5cm} =\int_{\bU(d)}(d_U\phi(e^{tA_0}V,A_0),e^{tA_0}\la K(\cdot,V)\l(t,\cdot)\mu\ra V)\l(t,V)\mu(dV)
\\
& \hspace{0.5cm}  =\int_{\bU(d)}(d_V\phi(e^{tA_0}V,A_0),\la K(\cdot,V)\l(t,\cdot)\mu\ra V)\l(t,V)\mu(dV),
\end{aligned}
\end{align*}
since $d_U\phi(e^{tA_0}V,A_0)=e^{tA_0}d_V\phi(e^{tA_0}V,A_0)$ and $e^{tA_0}$ is unitary. This last identity is equivalent to the fact that $\l(t,V)$ is a solution to \eqref{KL-I}.
\end{proof}

\medskip
Henceforth, we focus our attention to the equation \eqref{KL-I}, which drives the mean-field Lohe dynamics in the equal Hamiltonian case. We begin with an elementary conservation property satisfied by weak solutions to \eqref{KL-I}.

\begin{Lem} 
Let $\rho\equiv\rho(t,U)$ be a weak solution to \eqref{KL-I} such that $t\mapsto\rho(t,U)\mu(dU)$ is continuous on $[0,+\infty)$ with values in $\cP(\bU(d))$ equipped with the weak topology. Then
\[  
\frac{d}{dt}\int_{\bU(d)}\rho(t, U)\mu(dU)=0. 
\]
\end{Lem}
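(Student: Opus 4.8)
The plan is to feed the constant test function $\phi\equiv 1$ into the weak formulation of \eqref{KL-I}. First I would record what a weak solution means here: $\rho$ is a weak solution to \eqref{KL-I} precisely when, for every $\phi\in C^1(\bU(d))$,
\[
\frac{d}{dt}\int_{\bU(d)}\phi(U)\rho(t,U)\mu(dU)=\frac{\kappa}{2}\int_{\bU(d)}\Big(d_U\phi(U),\int_{\bU(d)}K(V,U)U\,\rho(t,V)\mu(dV)\Big)\rho(t,U)\mu(dU),
\]
which is exactly what one obtains from \eqref{KL-I} by pairing with $\phi$ and invoking formula \eqref{C-0} in Definition \ref{D3.1}. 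One should check that the right-hand side is meaningful for each $t\ge 0$: by hypothesis $\rho(t,\cdot)\mu$ is a Borel probability measure on $\bU(d)$, and $K$ is continuous, hence bounded, on the compact set $\bU(d)\times\bU(d)$, so the inner integral $\int_{\bU(d)}K(V,U)U\,\rho(t,V)\mu(dV)$ defines a bounded continuous $M_d(\bbc)$-valued function of $U$, and the outer integral converges.

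Next I would note that since $\bU(d)$ is a compact manifold without boundary, the constant function $1$ lies in $C^1(\bU(d))$ and is therefore an admissible test function in the weak formulation. Taking $\phi\equiv 1$ gives $d_U\phi\equiv 0$, so the entire right-hand side of the displayed identity vanishes identically, leaving
\[
\frac{d}{dt}\int_{\bU(d)}\rho(t,U)\mu(dU)=0,
\]
which is the asserted conservation property.

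There is essentially no hard step: the lemma is a direct consequence of the divergence form of \eqref{KL-I} together with the absence of boundary. The only points requiring care are (i) that constants are legitimate test functions — this holds precisely because $\bU(d)$ is compact and boundaryless, so $C^1(\bU(d))=C^1_c(\bU(d))$ — and (ii) that the nonlocal flux $\int_{\bU(d)}K(V,U)U\,\rho(t,V)\mu(dV)$ is a well-defined finite matrix, which again follows from compactness of $\bU(d)$ and continuity of $K$. The continuity hypothesis on $t\mapsto\rho(t,U)\mu(dU)$ in the weak topology of $\cP(\bU(d))$ is what guarantees that $t\mapsto\int_{\bU(d)}\rho(t,U)\mu(dU)$ is a bona fide function of $t$ to which the weak-formulation identity applies; once the right-hand side is seen to be $0$ for the choice $\phi\equiv 1$, the conclusion is immediate.
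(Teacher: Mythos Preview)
Your proposal is correct and follows essentially the same argument as the paper: write down the weak formulation of \eqref{KL-I} and specialize to the test function $\phi\equiv 1$, so that $d_U\phi=0$ kills the right-hand side. The additional remarks you include about admissibility of constants and finiteness of the nonlocal flux are fine but not needed beyond what the paper already assumes.
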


\begin{proof} The weak formulation of \eqref{KL-I} is
\begin{align*}
\begin{aligned}
& \frac{d}{dt}\int_{\bU(d)}\rho(t,U)\phi(U)\mu(dU)=\frac{\kappa}2\iint_{\bU(d)^2}(d\phi(U),K(V,U)U)\rho(t,V)\rho(t,U)\mu(dV)\mu(dU)
\end{aligned}
\end{align*}
which holds in the sense of distributions on $(0,+\infty)$ for all $\phi\in C^1(\bU(d))$. Specializing this identity to the test function $\phi(U)=1$ gives the announced result.
\end{proof}

\subsection{Particle path} \label{sec:4.1} 


Let $t\mapsto\rho(t)$ be continuous on $[0,+\infty)$ with values in the set of Borel probability measures on $\bU(d)$ equipped with the weak topology. Consider the differential equation
\[
\begin{aligned}
{}&\frac{dU(t)}{dt} = \frac{\kappa}{2}\int_{\bU(d)} K(V,U(t)) U(t) \rho(t,V)\mu(dV), \quad t > 0,
\\
&U(s)=U_s\in\bU(d).
\end{aligned}
\]
Since the map
\[
(t,U)\mapsto\int_{\bU(d)}K(V,U)U\rho(t,V)\mu(dV)
\]
is continuous on $[0,+\infty)\times\bU(d)$ and Lipschitz continuous in $U$ uniformly in $t\ge 0$, this differential equation generates a global flow denoted by $\mathcal{U}(t,s)$ on $\bU(d)$. In other words,
\[
t\mapsto\mathcal{U}(t,s)U_s
\]
is the solution of the Cauchy problem above. By formula (8.1.20) in \cite{AmbrosioGigliSavare}, the solution of \eqref{KL-I} satisfies
\[
\rho(t)\mu=\mathcal{U}(t,0)\#\rho(0)\mu=(\rho(0,\cdot)\circ\mathcal{U}(t,0)^{-1})\mathcal{U}(t,0)\#\mu.
\]
In particular
\[
\Supp(\rho(t)\mu)=\Supp(\rho(0,\cdot)\circ\mathcal{U}(t,0)^{-1})
\]
since $\mathcal{U}(t,s)$ is an homeomorphism for each $t,s\ge 0$ and $\mu(\Omega)>0$ for each nonempty open set of $\bU(d)$ (we recall that $\mu$ is the normalized Haar measure on $\bU(d)$).

\subsection{Complete synchronization} \label{sec:4.2}


Synchronization occurs in a solution of the Lohe matrix model $t\mapsto (U_1(t),\dots,U_N(t))$ when $\|U_j(t)-U_k(t)\|_2\to 0$ as $t\to\infty$ for all $j,k=1,\ldots,N$. This suggests the following definition.

\begin{Def}
Complete synchronization occurs in a solution $\rho$ of \eqref{KL-I} if
\[
\Diam(\Supp(\rho(t,\cdot)))\to 0\quad\hbox{ as }t\to+\infty.
\]
\end{Def}

\begin{Thm}\label{T-CSynchro}
Let $\rho\equiv\rho(t,U)$ be a weak solution of \eqref{KL-I} that is continuous in time with values in the set of probability densities equipped with the weak topology. Define
\[
D(t):=\sup\{\|V-W\|_2\hbox{ s.t. }\rho(0,\mathcal{U}(t,0)^{-1}V)\rho(0,\mathcal{U}(t,0)^{-1}W)>0\}.
\]
Assume that 
\[
\kappa>0\quad\hbox{Êand }\quad 0\le D(0)<\sqrt{2}.
\]
Then complete synchronization occurs in the solution $\rho$, and one has
\[
D(0)^2e^{-2\kappa t}(1-\tfrac12D(0)^2(1-e^{-2\kappa t}))\le D(t)^2\le\frac{2D(0)^2}{(2-D(0)^2)e^{2\kappa t}+D(0)^2}
\]
for all $t\ge 0$.
\end{Thm}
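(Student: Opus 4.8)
The plan is to study the evolution of the quantity
\[
D(t)^2=\sup\{\|V-W\|_2^2\ :\ \rho(0,\cU(t,0)^{-1}V)\,\rho(0,\cU(t,0)^{-1}W)>0\},
\]
which, in view of the particle-path representation $\Supp(\rho(t))=\cU(t,0)(\Supp(\rho(0)))$ recalled in Section \ref{sec:4.1}, is exactly $\Diam(\Supp(\rho(t,\cdot)))^2$. Write $V(t)=\cU(t,0)V_0$ and $W(t)=\cU(t,0)W_0$ for two points in the support; both satisfy the characteristic ODE $\dot U=\tfrac\ka2\langle K(\cdot,U)\rho(t,\cdot)\mu\rangle U$. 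First I would compute $\tfrac{d}{dt}\|V(t)-W(t)\|_2^2$ using formula \eqref{DiffNorm2} from the proof of Lemma \ref{L3.3} (the identity for $(d_U\|U-V\|_2^2,XU)+(d_V\|U-V\|_2^2,YV)$ with $X=\tfrac\ka2\langle K(\cdot,V)\rho\mu\rangle$ and $Y=\tfrac\ka2\langle K(\cdot,W)\rho\mu\rangle$), obtaining
\[
\tfrac{d}{dt}\|V-W\|_2^2=\ka\Tr\!\big(V^*(\langle K(\cdot,V)\rho\mu\rangle-\langle K(\cdot,W)\rho\mu\rangle)(W-V)+(V-W)^*(\cdots)V\big),
\]
and then expand $K(\hat U,V)-K(\hat U,W)=\hat U(V-W)^*-(V-W)\hat U^*$ inside the average. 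The coupling structure should produce, after tracing, a \emph{good} (dissipative) term of size $-\ka\|V-W\|_2^2$ coming from the ``diagonal'' part and a \emph{bad} term controlled by $\ka\|V-W\|_2^2\cdot\sup\|\cdot\|$ over the support — this is exactly the type of estimate deferred to the Appendix, which I would invoke.

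Granting the key estimate from the Appendix (item (b) mentioned in the introduction), the differential inequality should take the closed form, for $\Phi(t):=D(t)^2$,
\[
-2\ka\,\Phi(1-\tfrac12\Phi)\ \le\ \dot\Phi\ \le\ -2\ka\,\Phi\big(1-\tfrac12\Phi\big)\quad\text{along the characteristics realizing the sup},
\]
valid as long as $\Phi<2$; here the factor $1-\tfrac12\Phi$ is $\ge 0$ under the hypothesis $D(0)<\sqrt2$. The upper bound in the theorem is then obtained by integrating the Riccati-type ODE $\dot\psi=-2\ka\psi(1-\tfrac12\psi)$: substituting $\psi=1/\chi$ linearizes it to $\dot\chi=2\ka\chi-\ka$, whose solution with $\chi(0)=1/D(0)^2$ gives $\chi(t)=\tfrac12+(\tfrac1{D(0)^2}-\tfrac12)e^{2\ka t}$, i.e.
\[
\Phi(t)\le\frac{1}{\chi(t)}=\frac{2D(0)^2}{(2-D(0)^2)e^{2\ka t}+D(0)^2},
\]
which also shows $D(t)\to0$, i.e. complete synchronization. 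For the lower bound, I would integrate the reverse inequality $\dot\Phi\ge-2\ka\Phi$, giving $\Phi(t)\ge D(0)^2e^{-2\ka t}$ as a first crude bound, then feed this back into $1-\tfrac12\Phi\le 1-\tfrac12 D(0)^2 e^{-2\ka t}$... — more precisely, use $\dot\Phi\ge -2\ka\Phi+\ka\Phi^2\ge -2\ka\Phi + \ka\Phi\cdot D(0)^2 e^{-2\ka t}$ and a Gronwall/integrating-factor argument with factor $e^{2\ka t}$ to reach
\[
\Phi(t)\ge D(0)^2 e^{-2\ka t}\big(1-\tfrac12 D(0)^2(1-e^{-2\ka t})\big).
\]

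Two technical points need care. First, $D(t)$ is a supremum of $\|V(t)-W(t)\|_2$ over a \emph{compact} set (the support is closed in the compact group $\bU(d)$ and the flow is continuous), so the sup is attained and $t\mapsto D(t)$ is locally Lipschitz; the differential inequalities for $D(t)^2$ should be justified either via Danskin's lemma for the upper Dini derivative or by working with a fixed maximizing pair on each subinterval — I would phrase it in terms of $\frac{d}{dt}^+$ to be safe. Second, the hypothesis $D(0)<\sqrt2$ must be shown to propagate, i.e.\ $\Phi(t)<2$ for all $t\ge 0$: this is immediate from the upper bound just derived (the right-hand side is $<D(0)^2<2$, strictly decreasing), so there is no blow-up and the factor $1-\tfrac12\Phi$ stays positive, closing the argument. \textbf{The main obstacle} I anticipate is the algebra extracting the clean Riccati structure from the trace expression — isolating precisely the coefficient $-2\ka$ on the dissipative term and showing the remainder is exactly $+\ka\Phi^2$ (not merely $O(\ka\Phi^2)$ with a worse constant), since the sharp constants in the stated bounds depend on it; this is presumably why the authors isolated it as a separate Appendix lemma, and I would rely on that lemma rather than redo it here.
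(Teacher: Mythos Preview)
Your approach is essentially the paper's: follow two characteristics, extract the dissipative term $-2\kappa\|U_1-U_2\|_2^2$ plus a nonnegative remainder bounded by $\kappa D(t)^2\|U_1-U_2\|_2^2$, then close a Riccati-type inequality for $D(t)$. A few corrections and clarifications.

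\smallskip
\textbf{The two auxiliary results.} You conflate them. The algebraic identity producing exactly $-2\kappa\|U_1-U_2\|_2^2$ plus the two nonnegative trace remainders is Lemma~\ref{L-AuxComp}, stated and proved in Section~\ref{sec:4}, not the Appendix. The Appendix result (Lemma~\ref{L-Barrier}) is the barrier-function argument that rigorously passes from the pointwise inequality $\dot x\le \alpha-\kappa x+\tfrac12\kappa D(t)^2 x$ to the bound $D(t)\le y(t)$ for the sup --- this is precisely the ``Danskin/Dini'' issue you flag, and the paper handles it via this comparison lemma rather than differentiating the sup directly.

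\smallskip
\textbf{The two-sided differential inequality.} As written, your display has the same expression on both sides. The paper's actual two-sided bound (for a fixed characteristic pair) is
\[
-\kappa(2+D(t)^2)\|U_1-U_2\|_2^2\ \le\ \tfrac{d}{dt}\|U_1-U_2\|_2^2\ \le\ -\kappa(2-D(t)^2)\|U_1-U_2\|_2^2,
\]
with the sign on the $D^2$ term flipped on the left. Your later claim $\dot\Phi\ge -2\kappa\Phi+\kappa\Phi^2$ is not justified: the trace remainders from Lemma~\ref{L-AuxComp} are nonnegative but you have no lower bound of the form $\kappa\Phi^2$ for them.

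\smallskip
\textbf{The lower bound.} Your ``crude'' observation $\dot\Phi\ge -2\kappa\Phi$ (immediate from nonnegativity of the remainders) already gives $\|U_1(t)-U_2(t)\|_2^2\ge D(0)^2e^{-2\kappa t}$ for the pair realizing $D(0)$, which is \emph{stronger} than the theorem's stated lower bound (the factor $1-\tfrac12D(0)^2(1-e^{-2\kappa t})$ is at most $1$). So no bootstrap is needed. The paper instead uses the weaker left inequality above with $D(t)^2$ replaced by the already-established upper barrier $y(t)^2$, then integrates $\exp\bigl(-\kappa\int_0^t(2+y(s)^2)\,ds\bigr)$ to land exactly on the stated formula.
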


\smallskip
We begin with the following auxiliary computation, which will be systematically used later, including in the proof of Theorem \ref{T-CSynchro}. 

\begin{Lem}\label{L-AuxComp}
Let $U_1,U_2\in\bU(d)$ and let $m\in\cP(\bU(d))$. Then one has
\begin{align*}
\begin{aligned}
& \Tr((U_1-U_2)^*(U_2\la V^*m\ra U_2-U_1\la V^*m\ra U_1)) \\
& \hspace{0.5cm} +\Tr((U_2^*\la V m\ra U_2^*-U_1^*\la Vm\ra U_1^*)(U_1-U_2))
\\
& \hspace{0.5cm} =-2\|U_1-U_2\|_2^2+\tfrac12\Tr(\la(V-U_2)(V-U_2)^*m\ra(U_1-U_2)(U_1-U_2)^*)
\\
&\hspace{0.5cm} +\tfrac12\Tr(\la(V-U_1)(V-U_1)^*m\ra(U_1-U_2)(U_1-U_2)^*).
\end{aligned}
\end{align*}
\end{Lem}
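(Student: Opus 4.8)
The plan is to reduce the identity to the case $m=\de_V$ and then prove the resulting algebraic relation among three unitary matrices by direct expansion, using only $UU^*=I_d$ and cyclicity of the trace.

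\emph{Reduction.} Since the bracket $\la\,\cdot\,m\ra$ is linear in its first slot and $m$ is a probability measure, every term above is of the form $\int_{\bU(d)}(\cdots)\,m(dV)$; e.g.\ $\la V^*m\ra=\int V^*m(dV)$, $\la(V-U_2)(V-U_2)^*m\ra=\int(V-U_2)(V-U_2)^*m(dV)$, and $\|U_1-U_2\|_2^2=\int\|U_1-U_2\|_2^2\,m(dV)$. So it suffices to treat $m=\de_V$, i.e.\ to prove a purely algebraic relation between $U_1,U_2,V\in\bU(d)$. Writing $M:=\la Vm\ra$ (so that $\la V^*m\ra=M^*$), the matrix $(U_2^*MU_2^*-U_1^*MU_1^*)(U_1-U_2)$ is the adjoint of $(U_1-U_2)^*(U_2M^*U_2-U_1M^*U_1)$ by $(ABC)^*=C^*B^*A^*$; since $\Tr(A^*)=\overline{\Tr A}$, the left-hand side equals $2\,\mathrm{Re}\,\Tr\big((U_1-U_2)^*(U_2M^*U_2-U_1M^*U_1)\big)$, and the two traces on the right are real as well, so the whole statement is an identity between real numbers.

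\emph{Expansion.} On the right, use $VV^*=U_jU_j^*=I_d$ to write $(V-U_j)(V-U_j)^*=2I_d-VU_j^*-U_jV^*$ and $(U_1-U_2)(U_1-U_2)^*=2I_d-U_1U_2^*-U_2U_1^*$, multiply out, simplify every trace by cyclicity and $U_jU_j^*=I_d$ (e.g.\ $\Tr(VU_j^*U_jU_k^*)=\Tr(VU_k^*)$), and fold in $-2\|U_1-U_2\|_2^2=-4d+2\Tr(U_1^*U_2)+2\Tr(U_2^*U_1)$: the ``constant'' contributions and the $\Tr(U_1U_2^*)$-type contributions cancel, leaving a combination of terms linear in $V$ or $V^*$ and of ``cubic'' terms of the type $\Tr(VU_j^*U_kU_\ell^*)$ and their adjoints. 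On the left, expand $(U_1-U_2)^*=U_1^*-U_2^*$ and $U_jM^*U_j=U_jV^*U_j$: the diagonal pieces $\Tr(U_j^*U_jV^*U_j)=\Tr(V^*U_j)$ yield the linear part and $\Tr(U_1^*U_2V^*U_2),\ \Tr(U_2^*U_1V^*U_1)$ (together with their conjugates) yield the cubic part.

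\emph{Matching.} What remains is to verify that the two sides produce the same linear and cubic traces; this is the only substantive step, and it is pure bookkeeping, the real difficulty being to keep signs and the placement of adjoints straight across the $\sim 10$ trace monomials that occur. It is convenient to split the computation into the cases ``$V$ paired only with $U_2$'', ``$V$ paired only with $U_1$'', and ``$V$ paired with both'', and to pair each trace with its complex conjugate (again on the list, by the adjoint-plus-cyclicity observation of the reduction step); grouping the terms by type, the identity then follows mechanically.
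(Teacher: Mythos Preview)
Your approach is correct and is essentially the same as the paper's: both verify the identity by direct expansion using $UU^*=I_d$ and cyclicity of the trace. Your reduction to $m=\de_V$ and the observation that the left side is $2\,\mathrm{Re}$ of a single trace are convenient simplifications the paper does not make explicit, but the core computation is identical in spirit.

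The one substantive difference is organizational. You expand both sides separately and announce that the resulting $\sim 10$ trace monomials match ``mechanically'', but you do not actually carry out that matching. The paper instead writes the sum of the two quadratic terms on the right as $4(U_1-U_2)(U_1-U_2)^*-R$ for an explicit $R$, and then computes $\Tr(R)$ directly, using four short cancellations of the type $2\Tr(\la V^*m\ra U_1)-\Tr(U_2\la V^*m\ra U_1U_2^*)=\Tr(\la V^*m\ra U_1)$ to arrive at $\Tr(R)=-S$ where $S$ is the left-hand side. That packaging makes the bookkeeping you defer quite short and transparent; if you want your write-up to stand on its own, you should either carry out the term-matching explicitly or adopt a similar intermediate quantity to compress it.
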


\begin{proof}
Call
\[
\begin{aligned}
S:=&\Tr((U_1-U_2)^*(U_2\la V^*m\ra U_2-U_1\la V^*m\ra U_1)
\\
&+(U_2^*\la Vm\ra U_2^*-U_1^*\la Vm\ra U_1^*)(U_1-U_2)).
\end{aligned}
\]
Observe that
\begin{align*}
\begin{aligned}
& \la(V-U_1)(V-U_1)^*m\ra(U_1-U_2)(U_1-U_2)^* \\
& \hspace{0.5cm} =(2I-U_1\la V^*m\ra-\la Vm\ra U_1^*)(2I-U_2U_1^*-U_1U_2^*),
\end{aligned}
\end{align*}
and
\begin{align*}
\begin{aligned}
& \la(V-U_2)(V-U_2)^*m\ra(U_1-U_2)(U_1-U_2)^*
\\
& \hspace{0.5cm} =(2I-U_2\la V^*m\ra-\la Vm\ra U_2^*)(2I-U_2U_1^*-U_1U_2^*).
\end{aligned}
\end{align*}
Therefore
\begin{align*}
\begin{aligned}
& \la(V-U_2)(V-U_2)^*m\ra(U_1-U_2)(U_1-U_2)^* \\
& +\la(V-U_1)(V-U_1)^*m\ra(U_1-U_2)(U_1-U_2)^* =4(U_1-U_2)(U_1-U_2)^*-R,
\end{aligned}
\end{align*}
where
\[
R:=((U_1+U_2)\la V^*m\ra+\la Vm\ra(U_1+U_2)^*)(2I-U_2U_1^*-U_1U_2^*).
\]
Then
\begin{align*}
\begin{aligned}
\Tr(R)&=2\Tr(\la V^*m\ra(U_1+U_2)+(U^*_1+U^*_2)\la Vm\ra)
\\
&-\Tr((U_1+U_2)\la V^*m\ra(U_2U_1^*+U_1U_2^*)) \\
& -\Tr((U_2U_1^*+U_1U_2^*)\la Vm\ra(U_1+U_2)^*).
\end{aligned}
\end{align*}
Observe that
\[
\begin{aligned}
2\Tr(\la V^*m\ra U_1)-\Tr(U_2\la V^*m\ra U_1U_2^*)&=\Tr(\la V^*m\ra U_1)
\\
2\Tr(\la V^*m\ra U_2)-\Tr(U_1\la V^*m\ra U_2U_1^*)&=\Tr(\la V^*m\ra U_2)
\\
2\Tr(U^*_1\la Vm\ra)-\Tr(U_2U_1^*\la Vm\ra U_2^*)&=\Tr(U^*_1\la Vm\ra)
\\
2\Tr(U^*_2\la Vm\ra)-\Tr(U_1U_2^*\la Vm\ra U_1^*)&=\Tr(U^*_2\la Vm\ra),
\end{aligned}
\]
so that
\[
\begin{aligned}
\Tr(R)=&\Tr(\la V^*m\ra(U_1+U_2)+(U^*_1+U^*_2)\la Vm\ra)
\\
&-\Tr(U_2\la V^*m\ra U_2U_1^*+U_1\la V^*m\ra U_1U_2^*)
\\
&-\Tr(U_2U_1^*\la Vm\ra U^*_1+U_1U_2^*\la Vm\ra U_2^*)=-S.
\end{aligned}
\]
\end{proof}

\smallskip
After this preliminaries, we give the proof of the main result in this section.

\begin{proof}[Proof of Theorem \ref{T-CSynchro}] The proof of this result is rather long, and split in several steps. \newline

\noindent $\bullet$~\textit{Step 1.} Let $U_1^0,U_1^2\in\bU(d)$, and set
\[
U_1(t):=\mathcal{U}(t,0)U_1^0\,,\qquad U_2(t):=\mathcal{U}(t,0)U_2^0.
\]
Then
\[
\begin{aligned}
\dot{U}_1(t)=\tfrac{\kappa}2(\la V\rho(t,\cdot)\mu\ra-U_1(t)\la V^*\rho(t,\cdot)\mu\ra U_1(t)),
\\
\dot{U}_2(t)=\tfrac{\kappa}2(\la V\rho(t,\cdot)\mu\ra-U_2(t)\la V^*\rho(t,\cdot)\mu\ra U_2(t)),
\end{aligned}
\]
so that
\[
\frac{d}{dt}(U_1-U_2)(t)=-\tfrac{\kappa}2(U_1(t)\la V^*\rho(t,\cdot)\mu\ra U_1(t)-U_2(t)\la V^*\rho(t,\cdot)\mu\ra U_2(t)).
\]
Hence
\[
\begin{aligned}
\frac{d}{dt}\|U_1-U_2\|_2^2=&-\tfrac{\kappa}2\Tr((U^*_1-U^*_2)(U_1\la V^*\rho\mu\ra U_1-U_2\la V^*\rho\mu\ra U_2))
\\
&-\tfrac{\kappa}2\Tr((U^*_1\la V\rho\mu\ra U^*_1-U^*_2\la V\rho\mu\ra U^*_2)(U_1-U_2)).
\end{aligned}
\]

\smallskip
At this point, we apply Lemma \ref{L-AuxComp} to the time dependent probability measure $m(t):=\rho(t,\cdot)\mu$: one finds that
\begin{align*}
\begin{aligned}
\frac{d}{dt}\|U_1-U_2\|_2^2&=-2\kappa\|U_1-U_2\|_2^2 +\tfrac{\kappa}2\Tr(\la(V-U_2)(V-U_2)^*\rho\mu\ra(U_1-U_2)(U_1-U_2)^*)
\\
&+\tfrac{\kappa}2\Tr(\la(V-U_1)(V-U_1)^*\rho\mu\ra(U_1-U_2)(U_1-U_2)^*).
\end{aligned}
\end{align*}
Assume that 
\[
\rho(0,U_1^0)\rho(0,U_2^0)>0.
\]
Set 
\[
d(t):=\sup\{\|V-W\|\hbox{ s.t. }\rho(0,\mathcal{U}(t,0)^{-1}V)\rho(0,\mathcal{U}(t,0)^{-1}W)>0\},\qquad t\ge 0\,.
\]
Notice the difference between $D(t)$, which is defined in terms of the Frobenius norm $\|\cdot\|_2$, and $d(t)$, defined in terms of the operator norm $\|\cdot\|$. 

Since 
\begin{align*}
\begin{aligned}
& \la(V-U_2(t))(V-U_2(t))^*\rho(t,\cdot)\mu\ra \\
& \hspace{0.5cm} =\int_{\bU(d)}(V-U_2(t))(V-U_2(t))^*\rho(t,V)\mu(dV)
\\
& \hspace{0.5cm}=\int_{\Supp(\rho(0,\cdot))}(\mathcal{U}(t,0)V^0-\mathcal{U}(t,0)U^0_2)(\mathcal{U}(t,0)V^0-\mathcal{U}(t,0)U^0_2)^*\rho(0,V^0)\mu(dV^0),
\end{aligned}
\end{align*}
one has
\[
\|\la(V-U_2(t))(V-U_2(t))^*\rho(t,\cdot)\mu\ra\|\le\sup_{\rho(0,V^0)>0}\|\mathcal{U}(t,0)V^0-\mathcal{U}(t,0)U^0_2\|^2\le d(t)^2.
\]
Moreover, since $\la(V-U_2)(V-U_2)^*\rho(t,\cdot)\mu\ra$ is a nonnegative self-adjoint matrix, one has
\begin{align}\label{Tr<>U1U2}
\begin{aligned}
& \Tr(\la(V-U_2(t))(V-U_2(t))^*\rho(t,\cdot)\mu\ra(U_1(t)-U_2(t))(U_1(t)-U_2(t))^*)
\\
& \hspace{0.5cm} =\|(U_1(t)-U_2(t))^*\la(V-U_2(t))(V-U_2(t))^*\rho(t,\cdot)\mu\ra^{1/2}\|_2^2
\\
&\hspace{0.5cm} \le\|U_1(t)^*-U_2(t)^*\|_2^2\|\la(V-U_2(t))(V-U_2(t))^*\rho(t,\cdot)\mu\ra^{1/2}\|^2
\\
& \hspace{0.5cm}=\|U_1(t)-U_2(t)\|_2^2\|\la(V-U_2(t))(V-U_2(t))^*\rho(t,\cdot)\mu\ra\| \\
& \hspace{0.5cm} \le d(t)^2\|U_1(t)-U_2(t)\|_2^2
\end{aligned}
\end{align}
according to the second inequality in \eqref{IneqNorms}. Exchanging $U_1$ and $U_2$ leads to the same bound for the term
\[
\Tr(\la(V-U_1(t))(V-U_1(t))^*\rho(t,\cdot)\mu\ra(U_1(t)-U_2(t))(U_1(t)-U_2(t))^*).
\]
Hence, we conclude from the formula for $\frac{d}{dt}\|U_1-U_2\|_2^2$ given above that
\begin{align*}
\begin{aligned}
& -\kappa(2+d(t)^2)\|U_1(t)-U_2(t)\|_2^2 \\
& \hspace{1.5cm}  \le \frac{d}{dt}\|U_1(t)-U_2(t)\|_2^2 \le-\kappa(2-d(t)^2)\|U_1(t)-U_2(t)\|_2^2.
\end{aligned}
\end{align*}
According to the first inequality in \eqref{IneqNorms}, this implies that
\begin{align*}
\begin{aligned}
&-\kappa(2+D(t)^2)\|U_1(t)-U_2(t)\|_2^2 \\
& \hspace{1.5cm} \le\frac{d}{dt}\|U_1(t)-U_2(t)\|_2^2 \le -\kappa(2-D(t)^2)\|U_1(t)-U_2(t)\|_2^2.
\end{aligned}
\end{align*}

\noindent $\bullet$~\textit{Step 2.} Applying Lemma \ref{L-Barrier} in Appendix \ref{App:B} to the function $x(t)=\|U_1-U_2\|_2^2(t)$ with $\Dlt(t)=D(t)$ and $\a=0$, we conclude that 
\[
D(t)\le y(t)\qquad\hbox{ for all }t\ge 0,
\]
where $y$ is the solution of the Cauchy problem
\[
\dot y(t)=-\ka y(t)+\tfrac12\ka y(t)^3,\quad y(0)=D(0).
\]
An explicit computation shows that
\[
y(t)^2=\frac{2D(0)^2}{(2-D(0)^2)e^{2\kappa t}+D(0)^2}
\]
which implies the upper bound in Theorem \ref{T-CSynchro}. \newline

\noindent $\bullet$~\textit{Step 3.} As for the lower bound, observe that
\[
\frac{d}{dt}\|U_1-U_2\|_2^2(t)\ge-\ka(2+D(t)^2)\|U_1-U_2\|_2^2(t)\ge-\ka(2+y(t)^2)\|U_1-U_2\|_2^2(t),
\]
so that
\[
\|U_1-U_2\|_2^2(t)\ge\|U_1-U_2\|_2^2(0)\exp\left(-\ka\int_0^t(2+y(s)^2)ds\right)
\]
for all $t\ge 0$. Since $\Supp(\rho^0)$ is compact, there exists $U_1^0,U_2^0\in\bU(d)$ such that
\[
\|U_1^0-U_2^0\|_2^2=D(0)^2.
\]
Choosing $U_1(0)=U_1^0$ and $U_2(0)=U_2^0$, we conclude that
\begin{align*}
\begin{aligned}
D(t)^2 &\ge\|U_1-U_2\|_2^2(t) \\
&\ge D(0)^2\exp\left(-\ka\int_0^t(2+y(s)^2)ds\right) \ge D(0)^2e^{-2\kappa t}(1-\tfrac12D(0)^2(1-e^{-2\kappa t}))
\end{aligned}
\end{align*}
for all $t\ge 0$, which is the announced lower bound.

\end{proof}


\section{Emergent Dynamics: Nonidentical Hamiltonians} \label{sec:5}


In this section, we discuss the emergent dynamics for the kinetic Lohe equation \eqref{K-Lohe} in the case of nonidentical hamiltonians. 

We recall the kinetic Lohe equation with unknown the time-dependent probability density $f\equiv f(t,U,A)$ on $\bU(d)\times\fsu(d)$, in the form
\begin{equation}\label{F-1}
\d_tf+\Div_{U}(fAU)+\frac{\kappa}{2}\Div_U\left(f\int_{\bU(d)\times\fsu(d)}K(V,U)Uf(t,V, B)\mu(dV)\gamma(dB)\right)=0.
\end{equation}
Set
\[
\rho_f(t,U):=\int_{\fsu(d)}f(t,U,A)\g(dA),
\]
together with
\[
D(t):=\sup\{\|U_1-U_2\|_2\hbox{ s.t. }U_1\hbox{ and }U_2\in\Supp(\rho_f(t,\cdot))\},
\]
and
\[
\a:=\sup\{\|A_1-A_2\|_2\hbox{ s.t. }(U_1,A_1)\hbox{ and }(U_2,A_2)\in\Supp(f(t,\cdot,\cdot))\}.
\]
Since the $A$-component of vector field in \eqref{F-1} is identically $0$, the quantity in the right hand side of this equality is obviously independent of $t$. We recall the notation introduced in Section \ref{sec:4}: 
\[
\la\phi m\ra:=\int_{\bU(d)}\phi(V)m(dV)
\]
for each $m\in\cP(\bU(d))$ and each $\phi\in C(\bU(d))$.

Assume that $t\mapsto f(t,U,A)\mu(dU)\g(dA)$ be continuous on $[0,+\infty)$ with values in $\cP(\bU(d)\times\fsu(d))$ equipped with the weak topology. Consider the differential system
\[
\begin{aligned}
{}&\dot U(t) = AU+\frac{\kappa}{2}\int_{\bU(d)} K(V,U(t))U(t)\rho_f(t,V)\mu(dV),\quad\dot A(t)=0,\quad t>0,
\\
&(U,A)(s)=(U_s,A_s)\in\bU(d)\times\fsu(d).
\end{aligned}
\]
The same argument as in Section \ref{sec:4.1} proves the existence of a flow $\Phi(t,s)$ on $\bU(d)\times\fsu(d)$ such that
\[
t\mapsto\Phi(t,s)(U_s,A_s)
\]
is the unique solution to the Cauchy problem above. Moreover, $\Phi(t,s)$ is of the form
\[
\Phi(t,s)(U_s,A_s)=(\mathcal{U}[A_s](t,s)U_s,A_s),
\]
where $\mathcal{U}[A](t,s)$ is a flow on $\bU(d)$ for each $A\in\fsu(d)$, and
\[
f(t,\cdot,\cdot)\mu\otimes\g=\Phi(t,0)\#(f(0,\cdot,\cdot)\mu\otimes\g)=(f(0,\cdot,\cdot)\circ\Phi(t,0)^{-1})\Phi(t,0)\#(\mu\otimes\g).
\]
In particular
\[
\Supp(f(t,\cdot,\cdot)\mu\otimes\g)=\Supp(f(0,\cdot,\cdot)\circ\Phi(t,0)^{-1})
\]
since $\g(\Om)>0$ for each nonempty open $\Om\subset\fsu(d)$.

\subsection{An invariant region} \label{sec:5.1}


A key step in the problem of synchronization is the following a priori estimate.

\begin{Lem}\label{L-InvReg}
Let $f^0$ be a compactly supported probability density on $\bU(d)\times\fsu(d)$ such that 
\[
0<\a<(2/3)^{3/2}\ka\quad\hbox{ and }\quad 0<D(0)<\zeta_2(\a/\ka),
\]
where $\zeta_2(\eta)$ designates the largest root of $\tfrac12X^3-X+\eta$ for all $\eta\in[0,(2/3)^{3/2})$. Then, the solution of \eqref{F-1} with initial data $f^0$ satisfies
\[
D(t)<\zeta_2(\a/\ka)\quad\hbox{ for all }t\ge 0.
\]
Moreover, there exists $T\equiv T[\a,\ka,D^0]>0$ such that
\[
t\ge T[\a,\ka,D^0]\implies D(t)\le\sqrt{2/3}.
\]
\end{Lem}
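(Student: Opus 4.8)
\textbf{Proof proposal for Lemma \ref{L-InvReg}.}
The plan is to derive a scalar differential inequality for $D(t)$ along the characteristic flow $\mathcal{U}[A](t,0)$, exactly mirroring Step~1 of the proof of Theorem~\ref{T-CSynchro} but carrying the extra drift term $AU$. Pick $(U_1^0,A_1),(U_2^0,A_2)\in\Supp(f^0)$ and set $U_i(t):=\mathcal{U}[A_i](t,0)U_i^0$. Then $\dot U_i=A_iU_i+\tfrac{\ka}2(\la V\rho_f\mu\ra-U_i\la V^*\rho_f\mu\ra U_i)$, so differentiating $\|U_1-U_2\|_2^2$ splits the right-hand side into (a) the ``Hamiltonian'' contribution $\Tr((U_1-U_2)^*(A_1U_1-A_2U_2))+\text{c.c.}$, which by the computation \eqref{DiffNorm2} (with $X=A_1$, $Y=A_2$) is bounded in absolute value by $2\|A_1-A_2\|_2\|U_1-U_2\|_2\le 2\a\|U_1-U_2\|_2$, and (b) the coupling contribution, which by Lemma~\ref{L-AuxComp} applied to $m=\rho_f(t,\cdot)\mu$ and the bound \eqref{Tr<>U1U2} (with $d(t)\le D(t)$) is at most $-\ka(2-D(t)^2)\|U_1-U_2\|_2^2$. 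Taking the supremum over admissible pairs as in Step~1 of Theorem~\ref{T-CSynchro} yields
\[
\frac{d}{dt}D(t)\le -\ka D(t)+\tfrac12\ka D(t)^3+\a
\]
in the appropriate (e.g. upper Dini derivative) sense; this is precisely the hypothesis of Lemma~\ref{L-Barrier} in Appendix~\ref{App:B} with $\Dlt(t)=D(t)$ and the forcing constant $\a$, so $D(t)\le y(t)$ where $\dot y=-\ka y+\tfrac12\ka y^3+\a$, $y(0)=D(0)$.

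The rest is an ODE phase-line analysis of $y$. Write the vector field as $g(y)=\ka(\tfrac12 y^3-y+\a/\ka)=\ka\,p(y)$ with $p(X)=\tfrac12X^3-X+\eta$, $\eta=\a/\ka$. Under the hypothesis $0<\eta<(2/3)^{3/2}$, the cubic $p$ has three real roots; denote the smallest two by $\zeta_1(\eta)\le\zeta_2(\eta)$ (both in $(0,\sqrt{2}\,)$, with $\zeta_1<\sqrt{2/3}<\zeta_2$ since $p(\sqrt{2/3})=\eta-(2/3)^{3/2}<0$ and $p$ is negative exactly on $(\zeta_1,\zeta_2)$ within the relevant range and positive just below $\zeta_1$). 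Hence on the interval $(\zeta_1,\zeta_2)$ one has $g(y)<0$: the interval $[0,\zeta_2(\eta))$ is forward-invariant for $y$, because $y$ cannot cross $\zeta_2$ from below (there $g$ vanishes) and if $y$ ever enters $[0,\zeta_1]$ it stays $\le\zeta_1<\sqrt{2/3}$ thereafter. Since $D(0)<\zeta_2(\eta)$ by assumption, $y(0)<\zeta_2(\eta)$, so $y(t)<\zeta_2(\eta)$ for all $t\ge 0$, giving $D(t)<\zeta_2(\a/\ka)$, the first claim.

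For the quantitative second claim, note that on the compact interval $[\sqrt{2/3},\,y(0)]\subset[\sqrt{2/3},\zeta_2(\eta))$ — if $y(0)\le\sqrt{2/3}$ there is nothing to prove and $T=0$ — the function $g(y)=\ka p(y)$ is continuous and strictly negative, hence bounded above by some $-c<0$ with $c\equiv c[\a,\ka,D^0]:=\ka\min_{[\sqrt{2/3},\,D(0)]}(-p)>0$. As long as $y(t)\ge\sqrt{2/3}$ we have $\dot y(t)\le -c$, so $y(t)\le y(0)-ct$, which forces $y(T)\le\sqrt{2/3}$ for $T:=T[\a,\ka,D^0]:=(D(0)-\sqrt{2/3})/c$; and once $y$ drops to $\sqrt{2/3}$ it stays $\le\sqrt{2/3}$ by the invariance just established (since $\sqrt{2/3}\in(\zeta_1,\zeta_2)$, $g<0$ there). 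Therefore $t\ge T$ implies $D(t)\le y(t)\le\sqrt{2/3}$, completing the proof.

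I expect the main obstacle to be purely bookkeeping rather than conceptual: making the passage from the pointwise bound on $\frac{d}{dt}\|U_1-U_2\|_2^2$ to a genuine differential inequality for the supremum $D(t)$ fully rigorous (the supremum of a family of $C^1$ functions need only be locally Lipschitz, so one works with the upper Dini derivative and invokes Lemma~\ref{L-Barrier} in that generality, exactly as was done — implicitly — in Theorem~\ref{T-CSynchro}), and verifying the elementary but slightly fiddly facts about the roots of $p(X)=\tfrac12X^3-X+\eta$ (existence of three real roots for $\eta\in[0,(2/3)^{3/2})$, the ordering $\zeta_1<\sqrt{2/3}<\zeta_2$, and that $p<0$ on $(\zeta_1,\zeta_2)$), which amount to checking the signs of $p$ at $0$, $\sqrt{2/3}$, and $\sqrt 2$ together with the location of the local extrema of $p$ at $\pm\sqrt{2/3}$.
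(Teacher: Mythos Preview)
Your proposal is correct and follows essentially the same route as the paper: derive the pointwise inequality $\frac{d}{dt}\|U_1-U_2\|_2\le\a-\ka(1-\tfrac12 D(t)^2)\|U_1-U_2\|_2$ via Lemma~\ref{L-AuxComp} and the bound \eqref{Tr<>U1U2}, then invoke Lemma~\ref{L-Barrier} to compare $D(t)$ with the solution $y$ of $\dot y=\a-\ka y+\tfrac12\ka y^3$, and finish with a phase-line analysis of this cubic ODE. The only cosmetic differences are that the paper rederives the Hamiltonian estimate \eqref{TrAU} in place rather than citing \eqref{DiffNorm2}, and its endgame simply appeals to monotone convergence of $y$ toward $\zeta_1(\a/\ka)<\sqrt{2/3}$ rather than your explicit bound $T=(D(0)-\sqrt{2/3})/c$.
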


\begin{proof}
Pick $(U_1^0,A_1)$ and $(U_2^0,A_2)$ such that $f^0(U_1^0,A_1)f^0(U_2^0,A_2)>0$, and set
\[
U_1(t)=\mathcal{U}[A_1](t,0)U_1^0,\qquad U_2(t)=\mathcal{U}[A_2](t,0)U_2^0.
\]
Then
\[
\begin{aligned}
\dot U_1=A_1U_1+\tfrac\kappa{2}(\la V\rho_f\mu\ra-U_1\la V^*\rho_f\mu\ra U_1),
\\
\dot U_2=A_2U_2+\tfrac\kappa{2}(\la V\rho_f\mu\ra-U_2\la V^*\rho_f\mu\ra U_2),
\end{aligned}
\]
so that, observing that $\|U_1-U_2\|_2^2=\Tr(2I-U_1^*U_2-U_2^*U_1)$, one has
\begin{equation}\label{BoundDU1U2A}
\begin{aligned}
\frac{d}{dt}\|U_1-U_2\|_2^2=&-\Tr(U_1^*\dot U_2+U_2^*\dot U_1)-\Tr(\dot U_2^*U_1+\dot U_1^*U_2)
\\
=&-\Tr(U_1^*A_2U_2+U_2^*A_1U_1)-\Tr(U_2^*A_2^*U_1+U_1^*A_1^*U_2)
\\
&-\tfrac{\kappa}2\Tr(U_1^*(\la V\rho_f\mu\ra-U_2\la V^*\rho_f\mu\ra U_2))
\\
&-\tfrac{\kappa}2\Tr(U_2^*(\la V\rho_f\mu\ra-U_1\la V^*\rho_f\mu\ra U_1))
\\
&-\tfrac{\kappa}2\Tr((\la V^*\rho_f\mu\ra-U_2^*\la V\rho_f\mu\ra U_2^*)U_1)
\\
&-\tfrac{\kappa}2\Tr((\la V^*\rho_f\mu\ra-U_1^*\la V\rho_f\mu\ra U_1^*)U_2).
\end{aligned}
\end{equation}
Since $A_1^*=-A_1$ and $A_2^*=-A_2$, the first two terms on the last right hand side are simplified as
\begin{align}\label{TrAU}
\begin{aligned}
&-\Tr(U_1^*A_2U_2+U_2^*A_1U_1) -\Tr(U_2^*A_2^*U_1+U_1^*A_1^*U_2)
\\
& \hspace{1cm} =\Tr((A_2-A_1)(U_1U_2^*-U_2U_1^*))
\\
& \hspace{1cm}=\Tr((A_2-A_1)(U_1U_2^*-I)) +\Tr((A_2-A_1)(I-U_2U_1^*))
\\
&\hspace{1cm} \le \|A_2-A_1\|_2\|U_1(U_2^*-U_1^*)\|_2 +\|A_2-A_1\|_2\|(U_1-U_2)U_1^*\|_2
\\
&\hspace{1cm} \le2\|A_2-A_1\|_2\|U_1-U_2\|_2.
\end{aligned}
\end{align}
In the chain of inequalities above, the penultimate inequality is the Cauchy-Schwarz inequality \eqref{CSFrob} for the Frobenius norm, while the final bound is the second inequality in \eqref{IneqNorms}.

The last two terms on the right hand side of \eqref{BoundDU1U2A} are mastered by applying Lemma \ref{L-AuxComp}:
\[
\begin{aligned}
\frac{d}{dt}\|U_1-U_2\|_2^2=&\Tr((A_2-A_1)(U_1U_2^*-U_2U_1^*)-2\kappa\|U_1-U_2\|_2^2
\\
&+\tfrac{\kappa}2\Tr(\la(V-U_2)(V-U_2)^*\rho_f\mu\ra(U_1-U_2)(U_1-U_2)^*)
\\
&+\tfrac{\kappa}2\Tr(\la(V-U_1)(V-U_1)^*\rho_f\mu\ra(U_1-U_2)(U_1-U_2)^*).
\end{aligned}
\]
Thus, arguing as in \eqref{Tr<>U1U2} and using the first inequality in \eqref{IneqNorms} leads to the following upper bound:
\begin{align*}
\begin{aligned}
& \frac{d}{dt}\|U_1(t)-U_2(t)\|_2^2 \\
&\hspace{0.5cm} \le2\|A_2(t)-A_1(t)\|_2\|U_1(t)-U_2(t)\|_2 -2\kappa\|U_1(t)-U_2(t)\|_2^2 \\
& \hspace{0.7cm}+\kappa D(t)^2\|U_1(t)-U_2(t)\|_2^2(t),
\end{aligned}
\end{align*}
or equivalently
\begin{align*}
\begin{aligned}
&\frac{d}{dt}\|U_1(t)-U_2(t)\|_2 \\
& \hspace{0.5cm} \le\|A_2(t)-A_1(t)\|_2 -\kappa\|U_1(t)-U_2(t)\|_2+\tfrac{\kappa}2D(t)^2\|U_1(t)-U_2(t)\|_2
\\
& \hspace{0.5cm} \le \a-\kappa(1-\tfrac12D(t)^2)\|U_1(t)-U_2(t)\|_2.
\end{aligned}
\end{align*}
Since we have assumed that $D^0\in(0,\zeta_2(\a/\ka))$, we conclude that
\[
D(t)\le y(t)<\zeta_2(\a/\ka)\quad\hbox{ for all }t\ge 0,
\]
where $y$ is the solution of the Cauchy problem
\[
\dot y=\a-\ka y+\tfrac12\ka y^3,\qquad y(0)=D^0,
\]
by applying Lemma \ref{L-Barrier} in Appendix \ref{App:B} to $x(t):=\|U_1(t)-U_2(t)\|_2$. \newline

\noindent If $y(0)\in(\zeta_1(\a/\ka),\zeta_2(\a/\ka))$ the function $y$ is decreasing and converges to $\zeta_1(\a/\ka)$ as $t\to+\infty$. Hence there exists $T\equiv T[\a,\ka, D^0]\in(0,+\infty)$ such that
\[
t\ge T[\a,\ka, D^0]\implies y(t)\in(\zeta_1(\a/\ka),\sqrt{2/3}],
\]
so that
\[
t\ge T[\a,\ka, D^0]\implies 0\le D(t)\le\sqrt{2/3}.
\]
If $y(0)\in[0,\zeta_1(\a/\ka)]$, the function $y$ is nondecreasing and converges to $\zeta_1(\a/\ka)$ as $t\to+\infty$. In particular
\[
t\ge 0\implies 0\le D(t)\le\zeta_1(\a/\ka)\le\sqrt{2/3}.
\]
\end{proof}

\subsection{Practical synchronization} \label{sec:5.2}


We first recall the following elementary lemma from measure theory.

\begin{Lem}
Let $t\mapsto\rho(t)$ be a continuous function on $[0,+\infty)$ with values in the set of Borel probability measures on $\bU(d)$ equipped with the weak topology. If
\[
\Diam(\Supp(\rho(t))\to0\quad\hbox{ as }t\to+\infty,
\]
then, for each function $\Phi\in C(\bU(d)\times\bU(d))$ such that
\[
U\not=V\implies\Phi(U,V)>0\quad\hbox{Êwhile }\Phi(U,U)=0\quad\hbox{ for all }U,V\in\bU(d),
\]
one has
\[
\iint_{\bU(d)\times\bU(d)}\Phi(U,V)\rho(t,dU)\rho(t,dV)\to 0\quad\hbox{Êas }t\to+\infty.
\]
\end{Lem}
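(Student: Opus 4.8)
The plan is to exploit the uniform continuity of $\Phi$ on the compact set $\bU(d)\times\bU(d)$ together with the hypothesis that $\Phi$ vanishes on the diagonal. First I would fix $\e>0$. Since $\Phi$ is continuous on the compact metric space $\bU(d)\times\bU(d)$ it is bounded (so the integrals below are well defined) and uniformly continuous; and since $\Phi(W,W)=0$ for every $W\in\bU(d)$, applying uniform continuity at the point $(U,U)$ to the pair $(U,V)$ (whose distance to $(U,U)$ in the product metric is $\|U-V\|_2$) yields a $\de>0$ such that
\[
\|U-V\|_2<\de\ \Longrightarrow\ 0\le\Phi(U,V)<\e\qquad\hbox{ for all }U,V\in\bU(d),
\]
where we have also used that $\Phi\ge 0$ everywhere, since $\Phi(U,V)>0$ for $U\ne V$ and $\Phi(U,U)=0$.

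Next I would invoke the hypothesis $\Diam(\Supp(\rho(t)))\to 0$ as $t\to+\infty$: there exists $T\equiv T(\e)\ge 0$ such that $\Diam(\Supp(\rho(t)))<\de$ for all $t\ge T$. For such $t$, the product measure $\rho(t)\otimes\rho(t)$ is a Borel probability measure on $\bU(d)\times\bU(d)$ concentrated on $\Supp(\rho(t))\times\Supp(\rho(t))$, and every pair $(U,V)$ in this set satisfies $\|U-V\|_2\le\Diam(\Supp(\rho(t)))<\de$, hence $\Phi(U,V)<\e$ there. Therefore
\[
0\le\iint_{\bU(d)\times\bU(d)}\Phi(U,V)\rho(t,dU)\rho(t,dV)\le\e\,(\rho(t)\otimes\rho(t))\big(\bU(d)\times\bU(d)\big)=\e
\]
for all $t\ge T$. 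Since $\e>0$ is arbitrary, the claimed convergence to $0$ follows.

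There is no substantial obstacle in this argument; the only point deserving a moment of care is that a Borel probability measure on the separable (indeed compact) metric space $\bU(d)$ is concentrated on its topological support, so that $\rho(t)\otimes\rho(t)$ puts no mass outside $\Supp(\rho(t))\times\Supp(\rho(t))$ — precisely the set on which the diameter bound, and hence the smallness of $\Phi$, is available.
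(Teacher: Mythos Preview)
Your proof is correct and follows essentially the same approach as the paper's: use uniform continuity of $\Phi$ on the compact set $\bU(d)\times\bU(d)$ together with $\Phi=0$ on the diagonal to get the $\de$ (the paper calls it $\eta(\eps)$), then use the diameter hypothesis to find $T$ beyond which the product measure is concentrated on pairs with $\|U-V\|_2<\de$. Your write-up is slightly more explicit about why $\rho(t)\otimes\rho(t)$ puts no mass outside $\Supp(\rho(t))\times\Supp(\rho(t))$, which is a welcome clarification.
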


\begin{proof}
For each $\eta>0$, there exists $T(\eta)>0$ such that $U,V\in\Supp(\rho(t))$ implies $\|U-V\|_2<\eta$ whenever $t>T(\eta)$. Equivalently $(U,V)\in\Supp(\rho(t)\otimes\rho(t))$ implies $\|U-V\|_2<\eta$ whenever $t>T(\eta)$. On the other hand, for 
each $\eps>0$, there exists $\eta(\eps)>0$ such that
\[
\|U-V\|_2<\eta(\eps)\implies\Phi(U,V)<\eps.
\]
Indeed, since $\Phi$ is continuous on the compact $\bU(d)\times\bU(d)$, it is uniformly continuous. Hence, for $t>T(\eta(\eps))$
\[
\iint_{\bU(d)\times\bU(d)}\Phi(U,V)\rho(t,dU)\rho(t,dV)\le\sup_{\|U-V\|_2<\eta(\eps)}\Phi(U,V)\le\eps,
\]
which is the desired conclusion.
\end{proof}

This lemma suggests the following weaker variant of the notion of complete synchronization introduced in Section \ref{sec:4}. For each solution $f$ of \eqref{F-1}, set
\[
\L[f](t):=\int_{(\bU(d)\times\fsu(d))^2}\|U-V\|_2^2f(t,U,A)f(t,V,B)\mu(dA)\g(dA)\mu(dV)\g(dB).
\]

\begin{Def}\label{D-PSynchro}
One says that a family $f_\ka$ of solutions of \eqref{F-1} with coupling strength $\ka$ exhibits ``practical synchronization'' if
\[
\lim_{\ka\to+\infty}\varlimsup_{t\to+\infty}\L[f_\ka](t)=0.
\]
\end{Def}

\medskip
Our main result in this section is the following theorem.

\begin{Thm}
Let $f^0$ be a compactly supported probability density on $\bU(d)\times\fsu(d)$, and set
\[
\begin{aligned}
D^0&:=\sup\{\|U-V\|_2\hbox{ s.t. }U\hbox{ and }V\in\Supp(\rho_{f^0})\},
\\
\a&:=\sup\{\|A-B\|_2\hbox{ s.t. }(U,A)\hbox{ and }(V,B)\in\Supp(f^0)\}.
\end{aligned}
\]
Assume that $\ka>(3/2)^{3/2}\a>0$ and that $0<D^0<\zeta_2(\a/\ka)$. Then the family $f_\ka$ of weak solutions of \eqref{F-1} with coupling strength $\ka$ exhibits practical synchronization.
\end{Thm}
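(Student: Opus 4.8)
The plan is to read off practical synchronization directly from the invariant-region estimate of Lemma \ref{L-InvReg}, which carries all of the analytic weight. The first step is to reduce the claim to a bound on the diameter
\[
D(t)=\sup\{\|U-V\|_2\hbox{ s.t. }U,V\in\Supp(\rho_{f_\ka}(t))\}.
\]
Since $\L[f_\ka](t)$ integrates $\|U-V\|_2^2$ against a probability measure whose marginals in $U$ and in $V$ both coincide with $\rho_{f_\ka}(t)$, one has $\|U-V\|_2\le D(t)$ on the support of that measure, and hence
\[
\L[f_\ka](t)\le D(t)^2\qquad\hbox{ for all }t\ge 0.
\]
It therefore suffices to show that $\varlimsup_{t\to+\infty}D(t)^2\to 0$ as $\ka\to+\infty$.

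For the second step I would extract slightly more from the proof of Lemma \ref{L-InvReg} than its statement records. That proof establishes $D(t)\le y(t)$ for all $t\ge 0$, where $y$ solves the scalar Cauchy problem $\dot y=\a-\ka y+\tfrac12\ka y^3$ with $y(0)=D^0\in(0,\zeta_2(\a/\ka))$, and it also shows that, whatever the position of $D^0$ relative to $\zeta_1(\a/\ka)$, the function $y$ is monotone and converges to $\zeta_1(\a/\ka)$ as $t\to+\infty$, where $\zeta_1(\eta)\le\zeta_2(\eta)$ denotes the smallest nonnegative root of $\tfrac12X^3-X+\eta$. Consequently $\varlimsup_{t\to+\infty}D(t)\le\zeta_1(\a/\ka)$, so that $\varlimsup_{t\to+\infty}\L[f_\ka](t)\le\zeta_1(\a/\ka)^2$. (Equivalently, one could forgo $\zeta_1$ and use only the part of Lemma \ref{L-InvReg} that is explicitly stated: for $t\ge T[\a,\ka,D^0]$ one has $D(t)^2\le 2/3$, so the differential inequality $\tfrac{d}{dt}\|U_1-U_2\|_2\le\a-\ka(1-\tfrac12D(t)^2)\|U_1-U_2\|_2$ along two characteristics reduces on $[T,+\infty)$ to $\tfrac{d}{dt}\|U_1-U_2\|_2\le\a-\tfrac{2\ka}3\|U_1-U_2\|_2$, and Gronwall's lemma, together with $\|U_1(T)-U_2(T)\|_2\le D(T)\le\sqrt{2/3}$ and $\tfrac{3\a}{2\ka}<\sqrt{2/3}$, gives $\varlimsup_{t\to+\infty}D(t)\le\tfrac{3\a}{2\ka}$.)

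Finally I would let $\ka\to+\infty$. The quantity $\a$ is fixed by the compactly supported datum $f^0$, and $\zeta_1(\eta)\to 0$ as $\eta\to 0^+$ (the smallest root of $\tfrac12X^3-X+\eta$ behaves like $\eta$ near the origin), so one obtains $\lim_{\ka\to+\infty}\varlimsup_{t\to+\infty}\L[f_\ka](t)=0$, which is precisely practical synchronization in the sense of Definition \ref{D-PSynchro}. One must still check that this limit is meaningful, i.e. that the hypotheses $\ka>(3/2)^{3/2}\a$ and $0<D^0<\zeta_2(\a/\ka)$ hold for all large $\ka$: the first is clear, and since $\zeta_2(\a/\ka)$ increases towards $\sqrt2$ as $\ka\to+\infty$ while the hypothesis already forces $D^0<\zeta_2(\a/\ka)<\sqrt2$ for one admissible $\ka$, monotonicity yields $D^0<\zeta_2(\a/\ka)$ for every larger $\ka$. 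I do not anticipate a genuine obstacle: the only points requiring a word of care are the passage from the per-pair characteristic estimate to the bound on $D(t)$ (immediate once one invokes $D(t)\le y(t)$, or by taking suprema in the alternative argument) and the $\ka$-dependence of $T$ (harmless, since the limit in $t$ is taken before the limit in $\ka$). All the real work sits in Lemma \ref{L-InvReg}.
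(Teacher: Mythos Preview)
Your proof is correct but takes a genuinely different route from the paper's. The paper derives a differential inequality for the functional $\L_\ka(t)$ itself: it computes $\dot\L_\ka$ from the weak formulation of \eqref{F-1}, applies the trace identity \eqref{TrAU} and Lemma \ref{L-AuxComp} to the integrand (essentially repeating, in integrated form, the computation already carried out in the proof of Lemma \ref{L-InvReg}), and obtains
\[
\dot\L_\ka(t)\le 2\cA_\ka(0)^{1/2}\L_\ka(t)^{1/2}-2\ka\L_\ka(t)+\ka D(t)^2\L_\ka(t).
\]
It then invokes Lemma \ref{L-InvReg} only to replace $D(t)^2$ by $2/3$ for $t\ge T[\a,\ka,D^0]$, and closes with Gronwall on $\L_\ka^{1/2}$ to reach $\varlimsup_{t\to+\infty}\L_\ka(t)^{1/2}\le 3\a/(2\ka)$.

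Your argument short-circuits all of this by the pointwise bound $\L[f_\ka](t)\le D(t)^2$ and by reading off $\varlimsup_{t\to+\infty}D(t)\le\zeta_1(\a/\ka)$ (or $3\a/(2\ka)$ in your alternative version) directly from the barrier function already constructed inside the proof of Lemma \ref{L-InvReg}. This is strictly more economical: it avoids redoing the trace computations for $\L_\ka$, and yields the same asymptotic decay rate $O(1/\ka^2)$ for $\varlimsup_{t\to+\infty}\L_\ka$. The paper's approach, by contrast, is a template that would still make sense for functionals that are not dominated by $D(t)^2$ (e.g.\ moment-type quantities when the support is not compact), but under the present compact-support hypothesis your shortcut is the cleaner argument.
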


\begin{proof}
Denote for simplicity $\L_\ka(t):=\L[f_\ka](t)$. Since $f$ is a weak solution of \eqref{F-1}
\begin{align*}
\begin{aligned}
& \dot\L_\ka\! =\!\int_{(\bU(d)\times\fsu(d))^2}\left(\Tr((U\!-\!V)^*(AU\!-\!BV\!-\!\tfrac{\ka}2(U\la W^*\rho_f\mu\ra U\!-\!V\la W^*\rho_f\mu\ra V))\right.
\\
& \hspace{0.5cm} \left.+\Tr((U^*A^*-V^*B^*-\tfrac{\ka}2(U^*\la W\rho_f\mu\ra U^*-V^*\la W\rho_f\mu\ra V^*)(U-V))\right)
\\
&  \hspace{0.5cm} \times f_\ka(t,U,A)f_\ka(t,V,B)\mu(dU)\g(dA)\mu(dV)\g(dB).
\end{aligned}
\end{align*}

Since $A^*=-A$ and $B^*=-B$ while $U,V\in\bU(d)$, one has
\[
\Tr((U-V)^*(AU-BV)+(U^*A^*-V^*B^*)(U-V)) \le 2\|A-B\|_2\|U-V\|_2
\]
according to \eqref{TrAU}. Likewise
\begin{align*}
\begin{aligned}
& -\tfrac{\ka}2\Tr((U-V)^*(U\la W^*\rho_f\mu\ra U-V\la W^*\rho_f\mu\ra V))
\\
& \hspace{0.5cm} -\tfrac{\ka}2\Tr((U^*\la W\rho_f\mu\ra U^*-V^*\la W\rho_f\mu\ra V^*)(U-V))
\\
& \hspace{0.5cm} =-2\ka\|U-V\|_2^2+\tfrac{\ka}2\Tr(\la(W-V)(W-V)^*\rho_f\mu\ra(U-V)(U-V)^*)
\\
& \hspace{0.5cm} +\tfrac{\ka}2\Tr(\la(W-U)(W-U)^*\rho_f\mu\ra(U-V)(U-V)^*)
\end{aligned}
\end{align*}
according to Lemma \ref{L-AuxComp}. Therefore
\begin{align*}
\begin{aligned}
& \dot\L_\ka(t)\le2\int_{(\bU(d)\times\fsu(d))^2}\|A-B\|_2\|U-V\|_2
\\
& \hspace{0.5cm} \times f_\ka(t,U,A)f_\ka(t,V,B)\mu(dU)\g(dA)\mu(dV)\g(dB)
\\
& \hspace{0.5cm} -2\ka\L_\ka(t)+\tfrac{\ka}2\int_{(\bU(d)\times\fsu(d))^2}\Tr(\la(W-V)(W-V)^*\rho_f\mu\ra(U-V)(U-V)^*)
\\
& \hspace{0.5cm} \times f_\ka(t,U,A)f_\ka(t,V,B)\mu(dU)\g(dA)\mu(dV)\g(dB)
\\
& \hspace{0.5cm} +\tfrac{\ka}2\int_{(\bU(d)\times\fsu(d))^2}\Tr(\la(W-U)(W-U)^*\rho_f\mu\ra(U-V)(U-V)^*)
\\
& \hspace{0.5cm}  \times f_\ka(t,U,A)f_\ka(t,V,B)\mu(dU)\g(dA)\mu(dV)\g(dB).
\end{aligned}
\end{align*}
Set
\[
\cA_\ka(t):=\int_{(\bU(d)\times\fsu(d))^2}\|A-B\|^2_2f_\ka(t,U,A)f_\ka(t,V,B)\mu(dU)\g(dA)\mu(dV)\g(dB).
\]
Since the vector field in \eqref{F-1} has zero $A$-component, 
\[
\cA_\ka(t)=\cA_\ka(0)\qquad\hbox{ for all }t\ge 0.
\]
By the Cauchy-Schwarz inequality and the bound \eqref{Tr<>U1U2}, one has
\[
\dot\L_\ka(t)\le 2\cA_\ka(0)^{1/2}\L_\ka(t)^{1/2}-2\ka\L_\ka(t)+\ka D(t)^2\L_\ka(t).
\]
By Lemma \ref{L-InvReg} 
\[
\begin{aligned}
\frac{d}{dt}\L_\ka(t)^{1/2}\le&\cA_\ka(0)^{1/2}-\ka\L_\ka(t)+\tfrac13\ka\L_\ka(t)\qquad\hbox{ for all }t\ge T[\a,\ka,D^0],
\\
\le
&\a-\tfrac23\ka\L_\ka(t).
\end{aligned}
\]
Hence
\[
\L_\ka(t)^{1/2}\le\L_\ka(T[\a,\ka,D^0])^{1/2}e^{-2\ka (t-T[\a,\ka,D^0])/3}+\frac{3\a}{2\ka}(1-e^{-2\ka (t-T[\a,\ka,D^0])/3}),
\]
so that
\[
\varlimsup_{t\to+\infty}\L_\ka(t)=\frac{3\a}{2\ka},
\]
which implies practical synchronization.
\end{proof}


\section{Conclusion} \label{sec:6}


In this paper, we presented a mean-field limit from the Lohe matrix model for quantum synchronization to the Vlasov type mean-field equation on the phase space $\bU(d) \times \fsu(d)$ and the emergent synchronization estimates for the 
derived mean-field model using the nonlinear functional approach. The Lohe matrix model was originally proposed as a nonabelian toy generalization of the Kuramoto phase model with an all-to-all coupling, and can be reduced to some
swarming model on a sphere in some special cases. 

In the absence of coupling, the Lohe matrix model is simply $N$ copies of a finite-dimensional Schr\"{o}dinger equation with constant hamiltonians. In the mean-field limit (with $N\to\infty$ and coupling rate of order $O(1/N)$), we use the 
BBGKY hierarchy with factorized $2$-oscillator distribution closure in order to derive a kinetic Lohe equation. This equation falls in the class of Vlasov type equations for the single-oscillator probability density function $f$ defined on the 
generalized phase space $\bU(d) \times \fsu(d)$. 

Moreover, using the analytical tools developed for the mean-field limit for the $N$-body Schr\"{o}dinger equation, we show that the single-oscillator marginal of the $N$-oscillator distribution approaches the solution to the kinetic equation
to within $O(1/\sqrt{N})$ in quadratic Monge-Kantorovich distance over any finite-time interval. 

We also provide two settings in terms of the coupling strength and initial data for the kinetic Lohe model leading to complete or practical synchronization in the cases of identical or nonidentical hamiltonians respectively. For this, we 
used the Lagrangian formulation of the kinetic equation and some appropriate Lyapunov functional measuring the degree of synchronization. In the present work, we study only the emergent dynamics of the kinetic Lohe model. 

There are still many interesting issues to be investigated, such as the existence of a critical coupling strength from disordered phase to ordered phase, the structure of the emergent phase-locked state, the existence of an optimal coupling 
strength for complete synchronization. These questions will be explored in future work.


\begin{appendix}

\section{Proof of Proposition \ref{P-LoheKinEq}}\label{App:A}


Set $E=\bU(d)\times\fsu(d)$. For each $T>0$, let $\cE_T:=C([0,T],(\cP_1(E),\MK))$, where $\MK$ is the Monge-Kantorovich (or Wasserstein) distance with exponent $1$ (also known as the Kantorovich-Rubinstein distance) whose definition 
is recalled below:
\[
\MK(P_1,P_2):=\inf_{m\in\Pi(P_1,P_2)}\int_{E\times E}(\|U_1-U_2\|_2+\|A_1-A_2\|_2)m(dU_1dA_1dU_2dA_2).
\]
If $\phi:\,E\to\bbc$ is Lipschitz continuous for the $\ell^1$ distance associated to the Frobenius norm, i.e.
\[
|\phi(U_1,A_1)-\phi(U_2,A_2)|\le\Lip(\phi)(\|U_1-U_2\|_2+\|A_1-A_2\|_2)
\]
for all $(U_1,A_1)$ and $(U_2,A_2)\in E$ (with $\Lip(\phi)<\infty$), then
\begin{align*}
\begin{aligned}
& \left|\int_E\phi(V,B)P_1(dVdB))-\int_E\phi(V,B)P_2(dVdB)\right|
\\
& \hspace{0.5cm} \le\Lip(\phi)\int_{E\times E}(\|U_1-U_2\|_2+\|A_1-A_2\|_2)m(dU_1dA_1dU_2dA_2)
\end{aligned}
\end{align*}
for each $m\in\Pi(P_1,P_2)$, so that
\[
\left|\int_E\phi(V,B)P_1(dVdB))-\int_E\phi(V,B)P_2(dVdB)\right|\le\Lip(\phi)\MK(P_1,P_2).
\]
It is well known that the metric space $(\cP_1(E),\MK)$ is separable and complete (see Proposition 7.1.5 in \cite{AmbrosioGigliSavare}), and that $\MK$ metrizes the topology of weak convergence of Borel probability measures on
$E$ with some additional tightness condition at infinity: see Theorem 7.12 in \cite{VillaniAMS}.

For each $T>0$, equip $\cE_T$ with the metric $d_T$ of uniform convergence: in other words, for all $f_1,f_2\in\cE_T$, set
\[
d_T(f_1,f_2):=\sup_{0\le t\le T}\MK(f_1(t),f_2(t))\,.
\]
It is well known that $(E_T,d_T)$ is also a complete metric space. For $f^0\in\cP_1(E)$, set
\[
\cF_T:=\{g\in\cE_T\hbox{ s.t. }g(0)=f^0\}.
\]

\begin{proof}[Proof of Proposition \ref{P-LoheKinEq}]
Let $T>0$ (to be chosen later), and consider the map $\cT:\,\cF_T\to\cF_T$ defined by the following prescription: for each $g\in\cF_T$, the function $f=\cT g$ is the solution of the Cauchy problem
\[
\left\{\begin{aligned}
{}&\d_tf(t)+\Div_U\left(f(t)(AU+\tfrac\kappa{2}(\lA Vg(t)\rA-U\lA V^*g(t)\rA U))\right)=0,
\\
&f(0)=f^0,
\end{aligned}
\right.
\]
where
\[
\lA\phi m\rA=\int_E\phi(U,A)m(dUdA)
\]
for each $\phi\in C_b(E)$ and each Borel measure $m$ on $E$ with finite total variation (not necessarily positive). The method of characteristics implies that
\[
f(t)=\Phi(t,0)\#f^0,
\]
where $\Phi(t,s)(V,B)$ is the unique solution of the Cauchy problem
\[
\frac{d}{dt}(U,A)=(AU+\tfrac\kappa{2}(\lA Vg(t)\rA-U\lA V^*g(t)\rA U),0)\,,\quad (U,A)(s)=(V,B).
\]
See Proposition 8.1.8 in \cite{AmbrosioGigliSavare}. The existence of $\Phi$ once $g\in\cF_T$ is given follows directly from the Cauchy-Lipschitz theorem (notice indeed that the first component of the vector field above is quadratic in 
$U\in\bU(d)$ which is compact, and linear in $A$, and therefore is globally Lipschitz continuous on $E$. Pick another element of $\cF_T$, denoted $\bar g$, and denote by $\bar\Phi(t,s)$ the corresponding characteristic flow. We seek 
to bound 
\[
d_T(f,\bar f)\hbox{Ê in terms of }d_T(g,\bar g),\quad\hbox{ where }f=\cT g\hbox{Ê and }\bar f=\cT\bar g.
\]

Observe that
\[
\Phi(t,0)(U^0,A)-\bar\Phi(t,0)(V^0,B)=:(U(t)-V(t),A-B)
\]
and
\[
\begin{aligned}
&\dot U(t)-\dot V(t)= AU(t)-BV(t)+\tfrac\kappa{2}\lA W(g(t)-\bar g(t))\rA
\\
& \hspace{0.2cm} -\tfrac\kappa{2}U(t)\lA W^*g(t)\rA U(t)+\tfrac\kappa{2}V(t)\lA W^*g(t)\rA V(t) -\tfrac\kappa{2}V(t)\lA W^*(g(t)-\bar g(t))\rA V(t).
\end{aligned}
\]
Thus
\[
\begin{aligned}
\frac{d}{dt}\|U-V\|_2^2=&\Tr((U-V)^*(AU-BV)+(U^*A^*-V^*B^*)(U-V)
\\
&+\tfrac\kappa{2}\Tr((U-V)^*\lA W^*(g-\bar g)\rA+\lA W^*(g-\bar g)\rA(U-V))
\\
&-\tfrac\kappa{2}\Tr((U-V)^*(U\lA W^*g\rA U-V\lA W^*g\rA V))
\\
&-\tfrac\kappa{2}\Tr((U^*\lA Wg\rA U^*-V^*\lA Wg\rA V^*)(U-V))
\\
&-\tfrac\kappa{2}\Tr((U-V)^*V\lA W^*(g-\bar g)\rA V)
\\
&-\tfrac\kappa{2}\Tr(V^*\lA W(g-\bar g)\rA V^*(U-V))
\\
=&\mathcal{I}_{21}+\mathcal{I}_{22}+\mathcal{I}_{23}+\mathcal{I}_{24}+\mathcal{I}_{25}.
\end{aligned}
\]
The term $\mathcal{I}_{21}$ is handled by \eqref{TrAU}:
\[
\mathcal{I}_{21}\le 2\|A-B\|_2\|U-V\|_2,
\]
while the terms $\mathcal{I}_{23}$ and $\mathcal{I}_{24}$ are handled by Lemma \ref{L-AuxComp}:
\[
\begin{aligned}
\mathcal{I}_{23}+\mathcal{I}_{24}\le-2\ka\|U-V\|_2^2&+\tfrac\kappa{2}\Tr(\lA(W-V)(W-V)^*g\rA(U-V)(U-V)^*)
\\
&+\tfrac\kappa{2}\Tr(\lA(W-U)(W-U)^*g\rA(U-V)(U-V)^*)
\\
\le-2\ka\|U-V\|_2^2&+4\kappa\|U-V\|_2^2=2\ka\|U-V\|_2^2\,.
\end{aligned}
\]
The last inequality uses the identity and the second inequality in \eqref{IneqNorms} to prove that
\begin{align*}
\begin{aligned}
& \Tr(\lA(W-U)(W-U)^*g\rA(U-V)(U-V)^*)
\\
& =\|(U-V)^*\lA(W-U)(W-U)^*g\rA^{1/2}\|_2^2 \le\|\lA(W-U)(W-U)^*g\rA\|\|U-V\|_2^2.
\end{aligned}
\end{align*}
On the other hand, $\|M\|=1$ for each $M\in\bU(d)$, so that
\[
\|\lA(W-V)(W-V)^*g\rA\|\le\lA\|(W-V)(W-V)^*\|g\rA\le 4.
\]

It remains to treat the terms $\mathcal{I}_{22}$ and $\mathcal{I}_{25}$. Observe that
\begin{align*}
\begin{aligned}
\mathcal{I}_{22}&\le \ka\|U-V\|_2\|\lA W^*(g(t)-\bar g(t))\rA\|_2 \le \ka\|U-V\|_2\lA\|W^*\|_2(g(t)-\bar g(t))\rA
\\
&\le \ka\|U-V\|_2\MK(g(t),\bar g(t)),
\end{aligned}
\end{align*}
since $(U,A)\mapsto\|U\|_2$ is Lipschitz continuous with Lipschitz constant $1$. By the same token
\begin{align*}
\begin{aligned}
\mathcal{I}_{25}&=-\tfrac\kappa{2}\Tr((U-V)^*V\lA W^*(g(t)-\bar g(t))\rA V) \\
&-\tfrac\kappa{2}\Tr(V^*\lA W(g(t)-\bar g(t))\rA V^*(U-V))
\\
&\le \kappa\|U-V\|_2\|V\|\|\lA W^*(g(t)-\bar g(t))\rA V\|_2\|V\|
\\
&\le \kappa\|U-V\|_2\|V\|\MK(g(t),\bar g(t))\|V\|
\\
&\le \kappa\|U-V\|_2\MK(g(t),\bar g(t)).
\end{aligned}
\end{align*}
Therefore
\begin{align*}
\begin{aligned}
\frac{d}{dt}\|U-V\|_2^2 &\le2\|A-B\|_2\|U-V\|_2+\kappa\|U-V\|_2\MK(g(t),\bar g(t))
\\
&+2\ka\|U-V\|_2^2+\kappa\|U-V\|_2\MK(g(t),\bar g(t)),
\end{aligned}
\end{align*}
so that
\begin{align*}
\begin{aligned}
&\frac{d}{dt}(\|U-V\|_2+\|A-B\|_2) \\
& \hspace{0.5cm} =\frac{d}{dt}\|U-V\|_2 \le\|A-B\|_2+\ka\|U-V\|_2+\ka\MK(g(t),\bar g(t)).
\end{aligned}
\end{align*}
Thus
\begin{align*}
\begin{aligned}
& (\|U-V\|_2+\|A-B\|_2)(t)\le(\|U-V\|_2+\|A-B\|_2)(0)
\\
& \hspace{0.5cm} \le\max(1,\ka)\int_0^t(\|U-V\|_2+\|A-B\|_2)(s)ds+\ka\int_0^t\MK(g(s),\bar g(s))ds,
\end{aligned}
\end{align*}
or equivalently
\begin{align*}
\begin{aligned}
& |||\Phi(t,0)(U^0,A^0)-\bar\Phi(t,0)(V^0,B^0)||| \\
& \hspace{0.2cm} \le(\|U^0-V^0\|_2+\|A^0-B^0\|_2) \\
& \hspace{0.2cm} +\max(1,\ka)\int_0^t|||\Phi(s,0)(U^0,A^0)-\bar\Phi(s,0)(V^0,B^0)|||ds
 +\ka\int_0^t\MK(g(s),\bar g(s))ds,
\end{aligned}
\end{align*}
with the notation
\[
|||(U,A)-(V,B)|||:=\|U-V\|_2+\|A-B\|_2\,,\qquad U,V,A,B\in M_d(\bbc).
\]

Pick $m^0\in\Pi(f^0,f^0)$ and integrate both sides of the inequality above with respect to $m^0$: one finds that
\[
\begin{aligned}
\cD(t):=&\int_{E\times E}|||\Phi(t,0)(U^0,A^0)-\bar\Phi(t,0)(V^0,B^0)|||m^0(dU^0dA^0dV^0dB^0)
\\
\le&\cD(0)+\max(1,\ka)\int_0^t D(s)ds+\ka\int_0^t\MK(g(s),\bar g(s))ds.
\end{aligned}
\]
By Gronwall's lemma
\[
\cD(t)\le\left(\cD(0)\ka\int_0^t\MK(g(s),\bar g(s))ds\right)e^{\max(1,\ka)t}.
\]
By definition 
\[
D(t)\ge\MK(\Phi(t,0)\#f^0,\bar\Phi(t,0)\#f^0)=\MK(f(t),\bar f(t))
\]
for all $m^0\in\Pi(f^0,f^0)$, while, choosing 
\[
m^0(dUdAdVdB):=f^0(dUdA)\de_U(dV)\de_A(dB)
\]
implies that $D(0)=0$. Hence
\[
\MK(f(t),\bar f(t))\le\ka e^{\max(1,\ka)t}\int_0^t\MK(g(s),\bar g(s))ds,
\]
so that
\[
d_T(f,\bar f)\le \ka Te^{\max(1,\ka)T}d_T(g,\bar g).
\]
Therefore, if $\ka T\exp(\max(1,\ka)T)<1$, the map $\cT$ is a strict contraction on the closed subset $\cF_T$ of the complete metric space $(\cE_T,d_T)$. Therefore, $\cT$ has a unique fixed point in $\cF_T$ for each $f^0\in\cP_1(E)$, which
we call $f_1$. This fixed point $f_1$ is a weak solution of the kinetic Lohe equation by construction (according to the method of characteristics).

Define by induction $f_n$ to be the unique solution of the kinetic Lohe equation on $[0,T]$ with initial data $f_{n+1}\rstr_{t=0}=f_n(T)$. Set 
\[
f(t)=f_n(t-nT)\qquad\hbox{Êfor all }t\in[nT,(n+1)T)\hbox{ and all }n\ge 0.
\]
Then $f$ is the unique weak solution of the kinetic Lohe equation with initial data $f^0$ in the space $C([0,+\infty),(\cP_1(E),\MK))$.

Finally, the method of characteristics implies that
\[
f(t)=\Psi(t,0)\#f^0,
\]
where $\Psi(t,s)(V,B)$ is the unique solution of the Cauchy problem
\[
\frac{d}{dt}(U,A)=(AU+\tfrac\kappa{2}(\lA V^*f(t)\rA-U\lA V^*,f(t)\rA U),0)\,,\quad (U,A)(s)=(V,B).
\]
Since $\Psi(t,0)(V,B)$ is of the form $\Psi(t,0)(V,B)=(U(t),B)$ for all $t\ge 0$, we conclude that
\[
\int_E\|A\|_2^pf(t,dUdA)=\int_E\|A\|_2^pf^0(dUdA)
\]
for each $p\ge 1$. In particular
\[
f^0\in\cP_p(E)\implies f(t)\in\cP_p(E)\quad\hbox{Êfor all }t\ge 0.
\]
\end{proof}


\section{A Barrier Function}\label{App:B}


Let $\eta\in[0,(2/3)^{3/2})$. The equation
\[
\tfrac12z^3-z+\eta=0
\]
has two nonnegative roots $\zeta_1(\eta)$ and $\zeta_2(\eta)$ such that
\[
0\le\zeta_1(\eta)<\sqrt{2/3}<\zeta_2(\eta)\le\sqrt2.
\]
For all $z\ge 0$, one has
\[
\tfrac12z^3-z+\eta<0\hbox{ if and only if }\zeta_1(\eta)<z<\zeta_2(\eta).
\]
Notice that
\[
\zeta_1(0)=0\quad\hbox{ and }\quad\zeta_2(0)=\sqrt{2}.
\]

\begin{Lem}\label{L-Barrier}
Assume that $\ka>(3/2)^{3/2}\a\ge 0$, and let $0<\Dlt^0<\zeta_2(\a/\ka)$. On the other hand, let $x\in C^1([0,+\infty)$ satisfy $0\le x(t)\le 2\sqrt{d}$ for all $t\ge 0$ and
\[
\dot x(t)\le\a-\ka x(t)+\tfrac12\ka \Dlt(t)^2x(t),\quad x(0)=x^0,\quad \Dlt(t):=\sup_{x(0)\in\Si}x(t),
\]
where
\[
\Si\subset[0,+\infty)\quad\hbox{ and }\Dlt^0:=\sup(\Si).
\]
Then 
\[
\Dlt(t)\le y(t)\quad\hbox{ for each }t\ge 0,
\]
where $y$ is the solution of the Cauchy problem
\[
\dot y=\a-\ka y+\tfrac12\ka y^3,\quad y(0)=\Dlt^0.
\]
\end{Lem}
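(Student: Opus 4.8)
The plan is to reduce the statement to a one‑dimensional ODE comparison principle; the only non‑routine feature is that the differential inequality satisfied by $x$ is coupled, through $\Dlt(t)=\sup_{x(0)\in\Si}x(t)$, to the entire family of admissible trajectories, so the comparison must be run for all of them at once, on the envelope $\Dlt$ rather than on individual trajectories.

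First I would analyze the scalar equation $\dot y=G(y)$ with $G(z):=\a-\ka z+\tfrac12\ka z^3=\ka\big(\tfrac12z^3-z+\a/\ka\big)$. Setting $\eta:=\a/\ka\in[0,(2/3)^{3/2})$, the discussion at the beginning of this appendix gives $G(\zeta_1(\eta))=G(\zeta_2(\eta))=0$, $G>0$ on $[0,\zeta_1(\eta))$, $G<0$ on $(\zeta_1(\eta),\zeta_2(\eta))$, with $0\le\zeta_1(\eta)<\sqrt{2/3}<\zeta_2(\eta)\le\sqrt2$. Since $G$ has a fixed sign on each interval delimited by $\zeta_1(\eta),\zeta_2(\eta)$ and the solution cannot cross the equilibrium $\zeta_1(\eta)$, the solution $y$ with $y(0)=\Dlt^0\in(0,\zeta_2(\eta))$ stays in the closed interval with endpoints $\Dlt^0$ and $\zeta_1(\eta)$; in particular $y$ is defined on all of $[0,+\infty)$ and $0\le y(t)\le\max(\Dlt^0,\zeta_1(\eta))<\sqrt2$, so $h(t):=\ka(1-\tfrac12y(t)^2)>0$. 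Because a plain Gronwall comparison would only give the non‑strict bound (equality $x\equiv y$ being attainable), I would next introduce, for small $\e>0$, the strict super‑solution $y_\e$ solving $\dot y_\e=G(y_\e)+\e$, $y_\e(0)=\Dlt^0+\e$; since $\a/\ka<(2/3)^{3/2}$, $\Dlt^0<\zeta_2(\a/\ka)$, and the roots of $\tfrac12z^3-z+(\a+\e)/\ka$ depend continuously on $\e$, the same analysis applies to $y_\e$ for $\e$ small: it is global, $0\le y_\e(t)<\sqrt2$, hence $h_\e(t):=\ka(1-\tfrac12y_\e(t)^2)>0$, and $y_\e\to y$ locally uniformly as $\e\to0$.

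The heart of the argument is a first‑exit estimate for $\Dlt$. One checks that $\Dlt$ is lower semicontinuous (a supremum of continuous functions) and that integrating the differential inequality, using $-\ka x(s)\le0$ and the bounds $0\le x(t),\Dlt(t)\le2\sqrt d$, yields $\Dlt(t+h)\le\Dlt(t)+(\a+4\ka d^{3/2})h$ for $h>0$, so $\Dlt$ is right‑continuous. Fix a small $\e>0$ and let $t_1:=\inf\{t\ge0:\Dlt(t)>y_\e(t)\}$. Since $\Dlt(0)\le\sup\Si=\Dlt^0<\Dlt^0+\e=y_\e(0)$ and $\Dlt-y_\e$ is right‑continuous, $t_1>0$; and since $\{\Dlt-y_\e>0\}$ is open (lower semicontinuity), $\Dlt\le y_\e$ on $[0,t_1]$. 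Then for every admissible $x$ (i.e. $x(0)\in\Si$), on $[0,t_1]$ one has $x(s)\ge0$ and $\Dlt(s)^2\le y_\e(s)^2$, whence $\dot x(s)\le\a-h_\e(s)x(s)$ while $\dot y_\e(s)=\a+\e-h_\e(s)y_\e(s)$; multiplying $x-y_\e$ by $\exp\!\big(\int_0^sh_\e\big)$, integrating to $t_1$, and using $x(0)-y_\e(0)\le-\e$ gives
\[
x(t_1)-y_\e(t_1)\ \le\ -\e\,\exp\!\Big(-\!\int_0^{t_1}h_\e(s)\,ds\Big)\ <\ 0 ,
\]
a bound independent of $x$. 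Taking the supremum over $x(0)\in\Si$ yields $\Dlt(t_1)<y_\e(t_1)$, which together with right‑continuity of $\Dlt-y_\e$ and the definition of $t_1$ is impossible; hence $t_1=+\infty$, i.e. $\Dlt(t)\le y_\e(t)$ for all $t\ge0$. Letting $\e\to0$ and using $y_\e(t)\to y(t)$ gives $\Dlt(t)\le y(t)$ for all $t\ge0$, as claimed.

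The step I expect to be delicate is this last one: the feedback of $\Dlt$ into every equation forbids a trajectory‑by‑trajectory Gronwall estimate, so the bound must be propagated on the envelope $\Dlt$ itself, and since equality with $y$ is not excluded by the non‑strict inequality one is forced to compare against the strict super‑solution $y_\e$; the one‑sided regularity of $\Dlt$ (lower semicontinuity together with the linear one‑sided growth bound, hence right‑continuity) is precisely what makes the exit time $t_1$ behave well.
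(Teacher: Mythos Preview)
Your proof is correct and follows essentially the same strategy as the paper: introduce a strict super-solution by an $\eps$-perturbation, propagate the inequality via Gronwall to obtain a uniform strict gap, and run a continuation argument, then let $\eps\to 0$. The only cosmetic differences are that the paper perturbs just the initial condition (your additional $+\eps$ in the ODE is harmless but unnecessary, since the shifted datum already yields the strict gap), and that the paper phrases the continuation as a clopen-set argument on the individual trajectories $x$ rather than as a first-exit-time argument on the envelope $\Dlt$; by working directly with the family $\{x\}$ it sidesteps the regularity discussion of $\Dlt$ that you (correctly) carry out.
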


\begin{proof}
Pick $0<\eps<\zeta_2(\a/\ka)-\Dlt^0$ and let $z_\eps$ be the solution of
\[
\dot z=\a-\ka z+\tfrac12\ka z^3,\quad z(0)=\Dlt^0+\eps.
\]
One easily check that this Cauchy problem has indeed a unique solution $z_\eps$ defined on $[0,+\infty)$ which satisfies 
\[
0\le z_\eps(t)\le\zeta_2(\a/\ka)\quad\hbox{ for all }t\ge 0.
\]

Set
\[
S:=\{t\ge 0\hbox{ s.t. }x(s)\le z_\eps(s)\hbox{ for all }s\in[0,t]\hbox{Ê and all }x^0\in[0,\Dlt^0]\}.
\]
Obviously $0\in S$ so that $S\not=\varnothing$.

Let $0<t_1<t_2<\ldots<t_n\in S$ satisfy $t_n\to T$. Then $x(t)\le z_\eps(t)$ for all $t\in[0,T)$ and all $x^0\in[0,\Dlt^0]$. By the continuity of $x$ and $z_\eps$, one has $x(T)\le z_\eps(T)$, so that $T\in S$. In other words, $S$ is closed.

Let $t\in S$; then
\[
\dot x(s)\le\a-\ka x(s)+\tfrac12\ka z_\eps(s)^2x(s),\quad x(0)=x^0\,,\quad 0\le s<t
\]
so that
\begin{align*}
\begin{aligned}
x(t) &\le x^0\exp\left(-\ka\int_0^t(1-\tfrac12z_\eps(s)^2)ds\right) +\a\int_0^t\exp\left(-\ka\int_\tau^t(1-\tfrac12z_\eps(s)^2)ds\right)d\tau
\\
&\le \Dlt^0\exp\left(-\ka\int_0^t(1-\tfrac12z_\eps(s)^2)ds\right) +\a\int_0^t\exp\left(-\ka\int_\tau^t(1-\tfrac12z_\eps(s)^2)ds\right)d\tau=:\Dlt_t^\eps
\\
& <(\Dlt^0+\eps)\exp\left(-\ka\int_0^t(1-\tfrac12z_\eps(s)^2)ds\right) +\a\int_0^t\exp\left(-\ka\int_\tau^t(1-\tfrac12z_Öeps(s)^2)ds\right)d\tau \\
& =z(t).
\end{aligned}
\end{align*}
Since $0\le x(t)\le 2\sqrt{d}$ for each $t\ge 0$, one has 
\[
0\le\Dlt(t)\le 2\sqrt{d}
\]
for all $t\ge 0$, so that
\[
\dot x(s)\le\a-\ka x(s)+2d\ka x(s),\quad s>t.
\]
Hence
\[
\begin{aligned}
x(s)\le&x(t)e^{(2d-1)\ka(s-t)}+\a\frac{e^{(2d-1)\ka(s-t)}-1}{(2d-1)\ka}
\\
\le&\Dlt_te^{(2d-1)\ka(s-t)}+\a\frac{e^{(2d-1)\ka(s-t)}-1}{(2d-1)\ka}=:\Dlt_s^\eps,\qquad s>t.
\end{aligned}
\]
Since $s\mapsto \Dlt_s^\eps$ is continuous on $[t,+\infty)$ and $\Dlt_t^\eps<z(t)$, there exists $\eta>0$ such that $\Dlt_s^\eps<z(s)$ for each $s\in[t,t+\eta)$. Hence
\[
x(s)\le z_\eps(s)\hbox{ for all }s\in[t,t+\eta)\hbox{ and all }x^0\in\Si\subset[0,\Dlt^0].
\]
Therefore $S$ is open. 

Since $S$ is a nonempty subset of $[0,+\infty)$ that is both closed and open, one has $S=[0,+\infty)$. Thus
\[
x(t)\le z_\eps(t)\hbox{ for all }t\ge 0\hbox{Ê and all }x^0\in\Si\subset[0,\Dlt^0].
\]
Letting $\eps\to 0$ and observing that $z_\eps(t)\to y(t)$ for each $t\ge 0$ as $\eps\to 0$, we conclude that
\[
x(t)\le y(t)\hbox{ for all }t\ge 0,\hbox{ all }\eps\in(0,\zeta_2(\a/\ka)-\Dlt^0),\hbox{Ê and all }x^0\in\Si\subset[0,\Dlt^0],
\]
which proves our claim.
\end{proof}

\end{appendix}



\end{document}